\newcommand\coolunder[2]{\mathrlap{\smash{\underbrace{\phantom{%
    \begin{matrix} #2 \end{matrix}}}_{\mbox{$#1$}}}}#2}
\newcommand\coolrightbrace[2]{%
\left.\vphantom{\begin{matrix} #1 \end{matrix}}\right\}#2}
\newtheorem{theorem}{Theorem}[section]
\newtheorem{lemma}[theorem]{Lemma}
\newtheorem{proposition}[theorem]{Proposition}
\newtheorem{corollary}[theorem]{Corollary}
\newtheorem{conjecture}[theorem]{Conjecture}
\theoremstyle{definition}
\newtheorem{definition}[theorem]{Definition}
\declaretheorem[style=definition,qed=$\diamondsuit$,sibling=definition]{example}
\renewcommand\thmcontinues[1]{Continued}
\newcommand{\nvar}{[x_1,\ldots,x_n]}
\newcommand{\mo}[1]{\ (\operatorname{mod} #1)}
\newcommand{\lc}[1]{\operatorname{lc} (#1)}
\newcommand{\ini}[1]{\operatorname{in}(#1)}
\numberwithin{algorithm}{section}
\begin{document}
\title {Gröbner Bases of Generic Ideals}
\author {Juliane Capaverde and Shuhong Gao}
\maketitle

\begin{abstract}
Let $I = ( f_1, \dots, f_n )$ be a homogeneous ideal in the polynomial ring $K[x_1, \dots,x_n]$ over a field $K$ generated by generic polynomials. Using an incremental approach based on a method by Gao, Guan and Volny, and properties of the standard monomials of generic ideals, we show how a Gröbner basis for the ideal $(f_1, \dots, f_i)$ can be obtained from that of $(f_1, \dots, f_{i-1})$. If $\deg f_i = d_i$, we are able to give a complete description of the initial ideal of $I$ in the case where $d_i \geq \left(\sum_{j=1}^{i-1}d_j\right) - i -1$. It was conjectured by Moreno-Socías that the initial ideal of $I$ is almost reverse lexicographic, which implies a conjecture by Fröberg on Hilbert series of generic algebras. As a result, we obtain a partial answer to Moreno-Socías Conjecture: the initial ideal of $I$ is almost reverse lexicographic if the degrees of generators satisfy the condition above. This result improves a result by Cho and Park. We hope this approach can be strengthened to prove the conjecture in full.
\end{abstract}

\section{Introduction}

Let $R = K\nvar$ be the polynomial ring in $n$ variables over an infinite field $K$. A homogeneous ideal $I = (f_1, \dots, f_r)$ is said to be of type $(n, d_1, \dots, d_r)$ if, for each $1 \leq i \leq r$, $f_i$ is a homogeneous polynomial of degree $d_i$. We are interested in answering the following question: fixing the graded reverse lexicographic order, what is the shape of the Gröbner bases of \emph{most} ideals of type $(n, d_1, \dots, d_r)$?

An ideal $I = (f_1, \dots, f_r)$ of type $(n,d_1, \dots, d_r)$ can be identified, through the coefficients of $f_1, \dots, f_r$, with a point of the affine space $V = R^{N_1} \times R^{N_2} \times \cdots \times R^{N_r}$, where $N_i = {d_i+n-1 \choose d_i}$. That is, $V$ is viewed as the set of all ideals of type $(n, d_1, \dots, d_r)$. A property is said to be \emph{generic} if it holds on a Zariski-open subset of $V$. We also say such a property holds for \emph{most} ideals of type $(n, d_1, \dots, d_r)$, or that it holds for \emph{generic ideals}.

It is known that Hilbert functions and initial ideals are constant in a nonempty open subset of $V$ (see \cite{MR2651558}). A longstanding conjecture by Fröberg gives a formula for the generic Hilbert series.

\begin{conjecture}[Fröberg \cite{MR813632}] \label{conjfroberg} \index{Fröberg Conjecture}
If $I$ is a generic ideal of type $(n, d_1, \dots, d_r)$, then the Hilbert series of $R/I$, $S_{R/I}(z)$, is given by
$$S_{R/I}(z) = \bigg| \frac{\prod_{i=1}^r (1- z^{d_i})}{(1-z)^n}\bigg|.$$
\end{conjecture}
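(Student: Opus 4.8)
The statement to be proved — Fröberg's conjecture — is a famous open problem, so I will describe only a viable route toward it and indicate where the obstruction sits. The plan is not to attack the Hilbert-series formula directly but to establish the stronger conjecture of Moreno-Socías on the \emph{shape} of the generic initial ideal, since it is known (Pardue) that the latter implies the former: for a generic ideal $I$ of type $(n, d_1, \dots, d_n)$ one wants to show that $\ini{I}$ with respect to the graded reverse lexicographic order is \emph{almost reverse lexicographic}, meaning that whenever a monomial $m$ is a minimal generator of $\ini{I}$ and $m'$ is a monomial of the same degree with $m' \geq_{\mathrm{revlex}} m$, then $m' \in \ini{I}$. Because $R/I$ and $R/\ini{I}$ have the same Hilbert series, and because an almost-reverse-lexicographic monomial ideal is essentially determined by its Hilbert function, pinning down this shape for generic complete intersections — where the Hilbert function $\prod_{i=1}^n (1-z^{d_i})/(1-z)^n$ is already known — would prove a case of Moreno-Socías; combined with the known reductions of general $r$ to the case $r = n$ (or $r = n+1$), this is what would yield Conjecture~\ref{conjfroberg}.

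To reach the Moreno-Socías description I would build the Gröbner basis incrementally, one generator at a time: given a Gröbner basis $G_{i-1}$ of $(f_1, \dots, f_{i-1})$, produce one for $(f_1, \dots, f_i)$. The engine is the signature-based incremental algorithm of Gao, Guan and Volny: on adjoining $f_i$ one processes S-polynomials while recording the syzygy module, and the role of genericity is to guarantee that no ``accidental'' cancellations occur, so that the leading terms newly appearing in each degree $d$ are exactly those counted by the change of Hilbert function from $(f_1,\dots,f_{i-1})$ to $(f_1,\dots,f_i)$. The heart of the matter is then to determine \emph{which} standard monomials of $(f_1, \dots, f_{i-1})$ become nonstandard after multiplying by $f_i$: the claim is that for generic $f_i$ they are precisely the revlex-largest standard monomials in each degree — exactly the almost-reverse-lexicographic condition. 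Carrying this out requires a sufficiently explicit description of the standard monomials (the ``staircase'') of a generic ideal, at least in the relevant degree range, so a preliminary step is to extract that staircase from the known Hilbert function of complete intersections.

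The main obstacle — and the reason a full proof is out of reach by this method — is the range where $d_i$ is small relative to $\sum_{j<i} d_j$: there many minimal generators of $\ini{(f_1,\dots,f_{i-1})}$ of intermediate degrees are already present, the staircase is intricate, and it is delicate to show that multiplication by $f_i$ removes exactly the revlex-top standard monomials in each degree and nothing beneath them. When $d_i \geq \bigl(\sum_{j=1}^{i-1} d_j\bigr) - i - 1$, however, $d_i$ is large enough that in degrees $\geq d_i$ the standard monomials of $(f_1,\dots,f_{i-1})$ form a clean, explicitly describable staircase that is already almost reverse lexicographic, so the incremental step goes through, the initial ideal of $I$ can be written down, and the Hilbert series is computed — recovering and strengthening the result of Cho and Park. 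Getting below this bound seems to require a genuinely new idea for controlling the interaction of the many intermediate-degree generators, and that gap is precisely what separates this approach from a proof of the conjecture in full.
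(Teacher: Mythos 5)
The statement you were asked to prove is Fröberg's Conjecture, which is open; the paper does not prove it either, but merely states it as background and then establishes partial results toward the stronger Moreno-Socías conjecture. You correctly recognized this, and your strategy sketch — reduce to Moreno-Socías via Pardue, build Gröbner bases incrementally using Gao--Guan--Volny, exploit the explicit staircase structure of generic complete intersections (in particular the Lefschetz/Stanley behavior of the standard monomials), and show that for $d_i \geq \bigl(\sum_{j<i} d_j\bigr) - i - 1$ the revlex-largest standard monomials in each degree are precisely the ones that enter the initial ideal — is exactly the route the paper follows, including the accurate identification of the small-$d_i$ regime as the genuine obstruction.
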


The notation above means the following: if $\sum_{d=0}^{\infty} a_dz^d$ is a power series with integer coefficients, then
$\big| \sum_{d=0}^{\infty} a_dz^d \big| = \sum_{d=0}^{\infty} b_dz^d$,
where $b_d = a_d$ if $a_i >0$  for $0 \leq i \leq d$, and $b_d=0$ otherwise.

Moreno-Socías gave a conjecture describing the initial ideal of generic ideals with respect to the graded reverse lexicographic order, related to the weak reverse lexicographic property.

\begin{definition} \index{almost reverse lexicographic}
Let $J = ( x^{\alpha_1}, \dots, x^{\alpha_r})$ be a monomial ideal, and suppose $x^{\alpha_1}, \dots, x^{\alpha_r}$ are minimal generators, that is, these monomials are not divisible by one another. $J$ is said to be \emph{almost reverse lexicographic}, or \emph{weakly reverse lexicographic}, if, for every $i$, $J$ contains every monomial $x^{\alpha}$ such that $\deg x^{\alpha} = \deg x^{\alpha_i}$ and $x^{\alpha} > x^{\alpha_i}$.
\end{definition}

\begin{conjecture}[Moreno-Socías \cite{MR1966660}]\label{moreno-socias_conjecture} \index{Moreno-Socías Conjecture}
If $I$ is a generic homogeneous ideal in $R$, then the initial ideal of $I$ is almost reverse lexicographic.
\end{conjecture}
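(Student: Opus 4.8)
The plan is to prove the conjecture \emph{incrementally}, deriving a Gröbner basis of $I_i := (f_1,\dots,f_i)$ (and hence $\operatorname{in}(I_i)$) from one of $I_{i-1} := (f_1,\dots,f_{i-1})$, and to carry the induction through whenever the degrees are spread out enough — concretely whenever $d_i \ge \big(\sum_{j<i} d_j\big) - i - 1$, which already yields a new range of the conjecture. Since Hilbert functions and initial ideals are generic, one fixes at the outset a point of $V$ at which $f_1,\dots,f_n$ is a regular sequence and at which every multiplication and syzygy map met below has maximal rank; the inductive hypothesis is that $\operatorname{in}(I_{i-1})$ is almost reverse lexicographic, the base case $i=1$ being trivial since $\operatorname{in}(I_1)=(x_1^{d_1})$, a principal monomial ideal.

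First I would set up the signature framework of Gao, Guan and Volny: view $I_i$ as the image of $R^i \to R$, $e_j \mapsto f_j$, so that a signature Gröbner basis of $I_i$ splits as a signature Gröbner basis of $I_{i-1}$ (signatures supported on $e_1,\dots,e_{i-1}$) together with elements whose signatures involve $e_i$. The latter arise from J-pairs of $f_i$ against the current basis; for a generic sequence the only reductions to zero are those forced by the principal syzygies, so the new leading monomials are exactly the leading monomials of the normal forms of $m f_i$, with $m$ running over the standard monomials of $I_{i-1}$. In each degree $d$ this identifies the new part of $\operatorname{in}(I_i)$ with the set $\{\operatorname{lm}(w) : 0 \ne w \in \bar U_d\}$, where $\bar U_d \subseteq (R/I_{i-1})_d$ is the image of $\cdot f_i \colon (R/I_{i-1})_{d-d_i} \to (R/I_{i-1})_d$ and $\operatorname{lm}$ is taken in the standard monomial basis. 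From the short exact sequence
\[
0 \to (R/I_{i-1})(-d_i) \xrightarrow{\,\cdot f_i\,} R/I_{i-1} \to R/I_i \to 0 ,
\]
valid because $f_i$ is a nonzerodivisor on the complete intersection $R/I_{i-1}$, one reads off $\dim \bar U_d = \dim (R/I_{i-1})_{d-d_i}$, so only the \emph{placement} of the new leading monomials, not their number, remains to be determined.

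Second I would establish the combinatorial core: that $\{\operatorname{lm}(w) : w \in \bar U_d\}$ is the reverse-lex-\emph{largest} set of $\dim(R/I_{i-1})_{d-d_i}$ standard monomials of $I_{i-1}$ in degree $d$, and that, glued onto the already almost-reverse-lexicographic $\operatorname{in}(I_{i-1})$, this produces an almost-reverse-lexicographic ideal. The second half is a staircase verification from the inductive description of $\operatorname{in}(I_{i-1})$ and the known Hilbert function. The first half is where genericity is essential: one needs $\bar U_d$ to sit in general enough position that Gaussian elimination in the standard monomial basis puts pivots in the reverse-lex-largest columns. This is exactly where the hypothesis $d_i \ge \big(\sum_{j<i} d_j\big) - i - 1$ enters — it forces $d_i$ to be at least the top degree of a general Artinian reduction of $R/I_{i-1}$ minus $2$, so $\bar U_d$ can differ from the full ``expected'' subspace only in a bounded band of degrees, and in each such degree the required general position reduces to maximal rank of a single coefficient matrix built from $f_i$, which holds generically.

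The main obstacle is precisely that first half. Maximal rank of a generic matrix is free, but it yields only \emph{some} set of new leading monomials of the right cardinality; showing this set is a reverse-lex segment compatible with the old staircase requires controlling how grevlex interacts with the images $f_i \cdot(\text{standard monomials})$, and without the degree condition this interaction spreads over an unbounded range of degrees and is governed by the full module of syzygies of $f_1,\dots,f_{i-1}$ — the subspace $\bar U_d$ is then highly structured rather than generic, and deciding its leading-term set is essentially the Moreno-Socías problem itself. Removing the degree hypothesis by understanding the leading terms of $\bar U_d$ far below the top degree is what would be needed to settle the conjecture in general, and I would not expect the incremental method to accomplish it unaided.
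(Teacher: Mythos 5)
Your proposal follows essentially the same strategy as the paper: an incremental Gao--Guan--Volny computation, the exact sequence coming from regularity of $f_i$ on $R/I_{i-1}$, genericity to force maximal rank of the reduction matrices, and the degree hypothesis $d_i \ge \bigl(\sum_{j<i} d_j\bigr) - i - 1$ to pin down where the new leading terms land; you also correctly observe that the statement is an open conjecture and that removing the degree condition is exactly where the incremental method stalls, which is the paper's own assessment. The part your sketch waves past --- and the paper's actual technical content --- is how ``general position'' is cashed out. The paper introduces $z$ as an auxiliary variable, shows that $R/\pi(I)$ has the strong Stanley and strong Lefschetz properties with $x_n$ as the distinguished element, uses this to give an explicit block description of the reduction matrix $M_i$ in terms of blocks $\Gamma_{j,k}$ whose diagonals carry a fresh coefficient of the form $c_{k-j}+L$, and then proves nonsingularity of the needed square submatrix $\Theta_i$ by exhibiting a monomial in $\det\Theta_i$ that is obtained from a product of entries in exactly one way. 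Merely invoking maximal rank of a generic matrix, as you do, does \emph{not} by itself place the pivots in the reverse-lex-largest columns --- that is a nontrivial combinatorial claim, and it is precisely the Stanley/Lefschetz structure of the standard monomial set (not bare genericity) that makes the explicit term-selection argument close in the range $d\ge\delta-2$, hence under the stated degree condition.
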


The following is a weaker version of the Moreno-Socías conjecture, restricted to generic ideals in $R = K[x_1, \dots, x_n]$ generated by $n$ generic forms.

\begin{conjecture}\label{msconj}
Let $I = ( f_1, \dots, f_n )$ be a generic ideal in $R$. Then $\ini{I}$ is almost reverse lexicographic.
\end{conjecture}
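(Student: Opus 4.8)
The plan is to prove, by a double induction on the number of variables $n$ and then on the number of generators $i$, the following sharper statement (call it the \emph{bottom-segment property}): for every degree $d$, the degree-$d$ standard monomials of $I_i = (f_1,\dots,f_i)$ are precisely the $\dim_K(R/I_i)_d$ grevlex-smallest monomials of degree $d$. This implies the desired conclusion at once: if $m$ is a minimal generator of $\ini{I_i}$ of degree $d$, then $m$ lies in the set of degree-$d$ monomials belonging to $\ini{I_i}$, which by the bottom-segment property is the grevlex-\emph{top} segment of degree-$d$ monomials and hence is upward closed, so every degree-$d$ monomial above $m$ also lies in $\ini{I_i}$; thus $\ini{I_i}$ is almost reverse lexicographic. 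The base cases ($n=1$, or $i=1$) are immediate: for generic $f_1$ one has $\lm{f_1}=x_1^{d_1}$, the grevlex-largest monomial of degree $d_1$, so $\ini{(f_1)}=(x_1^{d_1})$ satisfies the definition vacuously.

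For the inner inductive step, fix a Gröbner basis $G_{i-1}$ of $I_{i-1}$. Since $i \le n$, generic $f_1,\dots,f_i$ form a regular sequence, so $f_i$ is a nonzerodivisor on $R/I_{i-1}$; from the exact sequence $0\to (R/I_{i-1})(-d_i)\xrightarrow{f_i}R/I_{i-1}\to R/I_i\to 0$ we get $(I_i/I_{i-1})_d\cong f_i\cdot(R/I_{i-1})_{d-d_i}$ and $\dim_K(R/I_i)_d=\dim_K(R/I_{i-1})_d-\dim_K(R/I_{i-1})_{d-d_i}$. Now reduce $f_i g$ modulo $G_{i-1}$ as $g$ runs over $R_{d-d_i}$: the normal forms span a subspace $W_d$ of the span of the degree-$d$ standard monomials of $I_{i-1}$, with $\dim_K W_d=\dim_K(R/I_{i-1})_{d-d_i}$, and a short argument shows that the initial monomials of $I_i$ in degree $d$ that are \emph{new} (not in $\ini{I_{i-1}}$) are exactly the leading monomials of the nonzero elements of $W_d$. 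Since a subspace in sufficiently general position with respect to a flag realises the top of that flag as its set of leading monomials, the induction closes provided $W_d$ is in general position with respect to the grevlex flag on the degree-$d$ standard monomials of $I_{i-1}$ — equivalently, provided no nonzero $g\in R_{d-d_i}$ has its $G_{i-1}$-normal form supported on the grevlex-smallest $\dim_K(R/I_i)_d$ standard monomials.

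This general-position statement is the crux, and the reason the conjecture remains open: $W_d$ is not a generic subspace but the image of the rigid map $g\mapsto \overline{f_i g}$, so multiplication by $f_i$ followed by reduction could in principle be degenerate relative to the grevlex flag. To control it I would use the favourable interaction of grevlex with $x_n$: the degree-$d$ standard monomials of $I_{i-1}$ split as the $x_n$-free ones, which correspond to the standard monomials of the generic ideal $\bar I_{i-1}\subseteq K[x_1,\dots,x_{n-1}]$ (fewer variables, so the outer induction applies), together with $x_n$ times the degree-$(d-1)$ standard monomials of $I_{i-1}:x_n=I_{i-1}$ (using that $x_n$ is a nonzerodivisor and that $\ini{I_{i-1}}:x_n=\ini{(I_{i-1}:x_n)}$ for grevlex); moreover the grevlex flag respects the grading by $x_n$-degree. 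Expanding $f_i$ $x_n$-adically and matching it against this decomposition reduces general position of $W_d$, layer by layer, to general position of the analogous spaces for $\bar I_{i-1}$ (known by the outer induction) plus a finite combinatorial verification about standard monomials; upper-semicontinuity lets one make this verification for a single well-chosen $f_i$. The hypothesis $d_i\ge\bigl(\sum_{j<i}d_j\bigr)-i-1$ is exactly what makes this verification succeed: it forces the degrees $d-d_i$ in which $W_d\neq 0$ into the range where the Hilbert function of $R/I_{i-1}$ has essentially reached the behaviour controlled by $\reg{R/I_{i-1}}$, so the set of standard monomials that $f_i$ can possibly reach is combinatorially rigid. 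Carrying the argument into the remaining range of degrees — where the standard monomials of $R/I_{i-1}$ in degrees $d-d_i$ and $d$ interleave in a more delicate way — is the missing step toward the full conjecture.
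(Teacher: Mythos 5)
Your proposed induction hypothesis, the ``bottom-segment property,'' is already false in the base case when $n\ge 3$, so the argument cannot be carried through as written. You assert that for every degree $d$ the degree-$d$ standard monomials of $I_i$ are the $\dim_K(R/I_i)_d$ grevlex-smallest monomials of degree $d$. For $i=1$ one has $\ini{I_1}=(x_1^{d_1})$; in degree $d_1+1$ the monomial $x_1^{d_1-1}x_2^2$ is a standard monomial that is grevlex-larger than $x_1^{d_1}x_3\in\ini{I_1}$, so the degree-$(d_1+1)$ standard monomials are not a bottom segment. What you verified for $i=1$ is the almost-reverse-lexicographic property of $(x_1^{d_1})$, which is indeed vacuous; but your inner induction is meant to carry the strictly stronger bottom-segment hypothesis, and that is what breaks. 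The same counterexample persists for $I_i$ with $i<n$, and your double induction on $(n,i)$ necessarily passes through these intermediate, non-Artinian ideals. (Note also that the statement is Conjecture~\ref{msconj}, which the paper does not prove in full: it establishes only Theorem~\ref{morenosociaspartial}, the case $d_i\ge\bigl(\sum_{j<i}d_j\bigr)-i-1$, and leaves the remaining cases to Conjectures~\ref{matrixismall} and~\ref{matriximedium}.)

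The statement the paper actually carries through is a \emph{relative} segment property: the new minimal generators of $\ini{I_i}$ in each degree $t$ are the grevlex-\emph{largest} \emph{standard} monomials of $I_{i-1}$ in degree $t$ --- a top segment of $B(I_{i-1})_t$, not of all degree-$t$ monomials. Combined with the inductive hypothesis that $\ini{I_{i-1}}$ is almost reverse lexicographic, this yields ARL for $\ini{I_i}$ (Proposition~\ref{inarl}). The paper realises this by a single induction that adjoins one variable and one generator at a time, so that the ideal at each stage is Artinian, and by the incremental computation of Section~\ref{g2v}: it shows the square submatrices $\Theta_t$ of the multiplication-by-$g$ matrices, picking out the largest standard monomials, are nonsingular, using the strong Stanley and strong Lefschetz structure of $B$ (Lemmas~\ref{propStanley} and~\ref{lefschetzprop}) together with an explicit nonzero monomial in $\det\Theta_t$ (Lemmas~\ref{lemmaselec}--\ref{lemmaunisel1}). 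Your $x_n$-adic decomposition, your appeal to the grevlex/$x_n$ interaction, and your semicontinuity argument are all in the right spirit and align with the paper's mechanism, and the degree condition you identify is the correct one; but the hypothesis carried through the induction must be the relative top-segment statement (or equivalently ARL plus the Stanley/Lefschetz structure on $B$), not the absolute revlex-segment statement.
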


It turns out that Conjecture \ref{msconj} implies the case where the number $r$ of polynomials is different from the number of variables (see \cite{MR2651558} for a proof of this fact). In this paper, only the case where $r=n$ is treated.

Partial answers have been given to both conjectures. Moreno-Socías Conjecture was proven to hold in the case $n=2$ by Aguirre \emph{et al.} \cite{MR2044588} and Moreno-Socías \cite{MR1966660}, $n=3$ by Cimpoea{\c{s}} \cite{MR2292574}, $n=4$ by Harima and Wachi \cite{MR2573232}, and the case for $d_1, \dots, d_n$ satisfying $d_i > \sum_{j=1}^i d_j - i +1$ by Cho and Park \cite{MR2397406}. The Fröberg Conjecture is known to be true for $n=2$ \cite{MR813632}, $n=3$ \cite{MR839581}, $r = n+1$ \cite{MR813632}, and $d_1 = \cdots = d_r =2$ and $n\leq 11$, $d_1 = \cdots = d_r = 3$ and $n \leq 8$ \cite{MR1283740}.

It was pointed out in \cite{MR2397406} that the Moreno-Socías Conjecture and the Fröberg Conjecture are closely related to the Lefschetz and Stanley properties.

\begin{definition}
Let $I$ be a zero-dimensional homogeneous ideal in $R$, and $A = R/I$. Let $\delta = \max\{i\ |\ A_i \neq 0\}$.
\begin{enumerate}[(i)]
\item $A$ has the strong (resp. weak) Lefschetz property \index{Lefschetz property} if there is a linear form $L$ such that multiplication map
$$A_{i} \overset{L^d}{\longrightarrow} A_{i+d}$$
is of maximal rank (that is, it is injective if $\dim_K A_i \leq \dim_K A_{i+d}$, and surjective if $\dim_K A_i \geq \dim_K A_{i+d}$) for each $i$ and each  $d$ (resp. d=1).

\item $A$ has the strong Stanley property \index{Stanley property} if there is a linear form $L$ such that the multiplication map
$$A_{i} \overset{L^{\delta-2i}}{\longrightarrow} A_{\delta-i}$$
is bijective for each $0 \leq i \leq \lfloor \delta/2 \rfloor$.
\end{enumerate}
\end{definition}

In \cite{MR2651558} Pardue shows that the Moreno-Socías conjecture implies a series of other conjectures. In particular, it implies the Fröberg conjecture, a fact that was also proven by Cho and Park \cite{MR2397406}. The connection between Moreno-Socías Conjecture and Fröberg Conjecture and the Lefschetz property is illustrated by the following conjecture, which is equivalent to the Fröberg Conjecture.

\begin{conjecture}[Pardue \cite{MR2651558}]\index{Pardue Conjecture}
If $I$ is a generic ideal of type $(n, d_1, \dots, d_n)$, then $x_{n-1}$ is a weak Lefschetz element on $R/\ini{I}+(x_n, \dots, x_{n-i+1})$, for each $0 \leq i \leq n-1$.
\end{conjecture}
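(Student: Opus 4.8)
\emph{Proof proposal.} The statement is a weak Lefschetz assertion about the quotients of $R$ by the \emph{monomial} ideals $J_i := \ini{I}+(x_n,\dots,x_{n-i+1})$, so at bottom it is combinatorial, and the plan is to reduce it to the description of $\ini{I}$ furnished by the incremental construction of this paper. Write $A_i=R/J_i$ and let $B_i$ denote its set of standard monomials, a homogeneous $K$-basis of $A_i$. For a variable $x$ and a degree $j$, the multiplication map $\times x\colon (A_i)_j\to(A_i)_{j+1}$ sends a standard monomial $m$ to $0$ when $xm\in J_i$ and to the standard monomial $xm$ otherwise, and distinct $m$ give distinct $xm$; hence its rank equals $\#\{\,m\in B_i : \deg m=j,\ xm\notin J_i\,\}$. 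Maximality of this rank amounts to two clean conditions: in degrees where $\dim_K(A_i)_j\le\dim_K(A_i)_{j+1}$ one needs $x$ to be a nonzerodivisor from degree $j$ to $j+1$, i.e.\ $(J_i:x)_j=(J_i)_j$, while in degrees where $\dim_K(A_i)_j\ge\dim_K(A_i)_{j+1}$ one needs every standard monomial of degree $j+1$ to be divisible by $x$. It then suffices to verify these conditions for the distinguished linear form $x=x_{n-1}$, for all $i$ and all $j$.

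The first step is to exploit the special behaviour of the graded reverse lexicographic order with respect to its last variable. For generic $I$ one has $\ini{(I:x_n)}=\ini{I}:x_n$ and, as a consequence, $\ini{(I+(x_n))}=\ini{I}+(x_n)$; iterating through the successive specialisations gives $J_i=\ini{(I+(x_n,\dots,x_{n-i+1}))}$ together with the corresponding identity for the colon by the last variable still present. Setting the last $i$ variables of $f_1,\dots,f_n$ to zero produces $n$ generic forms of degrees $d_1,\dots,d_n$ in $x_1,\dots,x_{n-i}$, so $A_i$ has the Hilbert function of that specialised generic algebra, and the colon ideal above is controlled by the same incremental Gröbner basis computation run in $n-i$ variables. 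This pins down the Hilbert functions of all the $A_i$ and, combined with the colon data, the ranks of the $\times x_{n-1}$ maps.

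The second step is to feed in the explicit shape of $\ini{I}$. Under the hypothesis $d_i\ge(\sum_{j=1}^{i-1}d_j)-i-1$ the incremental construction yields a complete list of the minimal generators of $\ini{I}$, which is moreover shown to be almost reverse lexicographic, and this list is manifestly stable under setting the last variables to zero; so one obtains an explicit presentation of each $J_i$, and the nonzerodivisor and divisibility conditions become a finite, routine monomial comparison carried out degree by degree. This gives the conjecture for this range of degree sequences and recovers the partial results of the paper. The real obstacle is the unconditional case: the almost-reverse-lexicographic property of $\ini{I}$ is precisely the Moreno-Socías Conjecture, and the package of maximal-rank conditions extracted in the first paragraph is, over all degrees, equivalent to Fröberg's Conjecture, so no elementary argument can be expected. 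The entire difficulty is thus concentrated in extending the incremental analysis of $\ini{I}$ beyond the degree hypothesis, and any such extension would carry the weak Lefschetz statement along with it.
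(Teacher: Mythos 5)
This statement is not something the paper proves: it is stated as Pardue's Conjecture and explicitly flagged as equivalent to the Fröberg Conjecture, which remains open. The paper offers no argument for it, and the body of the paper proves something different (the Moreno-Socías statement for $\ini I$ under the degree hypothesis $d_i \geq \sum_{j<i} d_j - i - 1$), from which Pardue's Conjecture in that restricted range would follow only via the known chain of implications Moreno-Socías $\Rightarrow$ Fröberg $\Leftrightarrow$ Pardue. You have correctly read the situation: your proposal is not, and cannot be, a proof of the conjecture as stated, and you say so.

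What you do offer is a sensible reduction to the combinatorics of the monomial ideals $J_i = \ini{I} + (x_n, \dots, x_{n-i+1})$, together with the standard grevlex facts $\ini{(I,x_n)} = \ini{I} + (x_n)$ and $\ini{(I:x_n)} = \ini{I} : x_n$, and the observation that under the paper's degree hypothesis the explicit description of $\ini I$ would make the weak-Lefschetz verification a finite monomial check. This is a reasonable direct route that the paper does not take: the paper never deduces Pardue's statement even in the restricted range, preferring to prove almost-reverse-lexicographicity of $\ini I$ and leave the downstream implications implicit. One caveat worth noting: you assert that $\ini I$ is ``manifestly stable under setting the last variables to zero,'' but the stability statement you actually need is that the minimal generators of $\ini I$ involve only $x_1,\dots,x_{n-i}$; this is true (and used in the paper) because each $z$-variable is a nonzerodivisor on the quotient, but it deserves the explicit citation of Eisenbud's Proposition 15.14 rather than being waved through. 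Beyond that the main content of your last paragraph — that the unconditional case is exactly the open Fröberg/Moreno-Socías circle and cannot be obtained elementarily — is correct and is the right thing to say about an open conjecture.
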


In \cite{MR2044588}, Aguirre et al gave the Gröbner basis of generic ideals in the case $n=2$. They used Buchberger's Criterion to prove a given set, which was suggested by computer calculations, was indeed a Gröbner bases. Our approach is similar to theirs, but instead of using Buchberger's Algorithm, we apply an incremental method from \cite{Gao10anew}. In this paper, we show how a Gröbner basis for the generic ideal $( I,g )$ can be obtained from the Gröbner basis of $I$ when a generic polynomial $g$ is added, employing properties of the standard basis of $I$. More precisely, we give a proof that the quotient $R/I$ has the strong Lefschetz and strong Stanley properties, and use this fact to establish the form of the initial ideal of $( I,g )$. As a result, we obtain a description of the initial ideal of $I = ( f_1, \dots, f_n )$ in the case the degrees $d_1, \dots, d_n$ satisfy $d_i \geq \left(\sum_{j=1}^{i-1} d_j\right) -i -1$. Our construction shows that Moreno-Socías Conjecture is true for these ideals, thus we give a partial answer to the conjecture. Our result is somewhat more general then the one given by Cho and Park in \cite{MR2397406}, where they showed Moreno-Socías to be true for degrees satisfying $d_i > \left(\sum_{j=1}^{i-1} d_j\right) -i +1$. We expect that our method can be strengthened to fully prove the conjecture.

\section{Incremental Gröbner bases}\label{g2v}

Let $I$ be any ideal in $R$ and suppose $G$ is a Gröbner basis for $I$ with respect to some monomial order. The initial term of a polynomial $f$ will be denoted by $\ini{f}$, and the initial ideal of $I$ will be denoted by $\ini{I}$. Let $g$ be any polynomial in $R$. We now describe the method given in \cite{GGV10} to obtain a Gröbner basis of the ideal $( I, g)$.

Let $B = \{ x^{\alpha_1}, x^{\alpha_2}, \dots, x^{\alpha_N}\}$ be the set of monomials that are not in $\ini{I}$, which we call standard monomials with respect to $I$. Note that when $I$ is not zero-dimensional, we have $N= \infty$.

Suppose $x^{\alpha_i}g \equiv h_i \mo G$, where $h_i \in R$ is a $K$-linear combination of monomials in $B$, for $1 \leq i \leq N$. We can write this as
\begin{equation}\label{GGV3}
\begin{pmatrix}
  x^{\alpha_1} \\
  x^{\alpha_2} \\
  \vdots \\
  x^{\alpha_N} \\
\end{pmatrix}
\cdot g \equiv \begin{pmatrix}
  h_1 \\
  h_2 \\
  \vdots \\
  h_N \\
\end{pmatrix}
\mo G.
\end{equation}

We apply row operations to both sides of Equation (\ref{GGV3}) as follows: for $1 \leq i < j \leq N$ and $a \in K$, subtract from the $j$-th row the $i$-th row multiplied by $a$. Our goal is to eliminate equal leading terms. So if $\ini{h_i} = \ini{h_j}$, with $i<j$, we use a row operation to eliminate the leading term of $h_j$. This means we only perform row operations downward. We start with $h_1$, using the first row to eliminate the leading term of all $h_j$ bellow that have the same leading monomial as $h_1$. Then we pass to the leading monomial of the new second row, and eliminate the leading terms of all $h_j$'s with the same leading monomial. Then we go to the new third row, and so on. Since the monomial order is a well ordering, any decreasing sequence of monomials must be finite. Hence we perform only a finite number of row operations on row $j$, using rows above it. By induction, we may assume that Equation (\ref{GGV3}) can be transformed into the form
\begin{equation}\label{GGV5}
\begin{pmatrix}
  u_1 \\
  u_2 \\
  \vdots \\
  u_N \\
\end{pmatrix}
\cdot g \equiv \begin{pmatrix}
  v_1 \\
  v_2 \\
  \vdots \\
  v_N \\
\end{pmatrix}
\mo G
\end{equation}
where $u_i, v_i \in R$ are $K$-linear combinations of monomials in $B$, and for $1 \leq i < j \leq N$ with $v_i, v_j \neq 0$, we have $\ini{v_i} \neq \ini{v_j}$, that is, the nonzero rows in the right-hand side of (\ref{GGV5}) have distinct leading monomials.

\begin{theorem}[Gao, Guan and Volny \cite{GGV10}]
Let $\widetilde{G} = G \cup \{ v_i | 1\leq i \leq N \}$. Then $\widetilde{G}$ is a Gröbner basis of $( I, g )$.
\end{theorem}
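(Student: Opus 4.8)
The plan is to verify the two defining properties of a Gröbner basis for $\widetilde{G} = G \cup \{v_i \mid 1 \leq i \leq N\}$: first, that $\widetilde{G}$ generates $(I,g)$, and second, that the initial terms $\ini{\widetilde{G}}$ generate $\ini{(I,g)}$. The first point is the easier one. Each $v_i$ lies in $(I,g)$ because, reading off row $i$ of Equation (\ref{GGV5}), we have $u_i g - v_i \in I$ (reduction modulo $G$ only subtracts elements of $I$), so $v_i = u_i g - (u_i g - v_i) \in (g) + I = (I,g)$. Hence $\widetilde{G} \subseteq (I,g)$. Conversely, $G \subseteq \widetilde{G}$ gives $I \subseteq (\widetilde{G})$, and we must also recover $g$; since the row operations transforming (\ref{GGV3}) into (\ref{GGV5}) are invertible over $K$, and one of the original left-hand rows is the monomial $1$ (assuming $1 \in B$, i.e. $I \neq R$; the degenerate case is trivial), $g$ itself is congruent mod $G$ to a $K$-linear combination of the $v_i$, so $g \in (\widetilde{G})$. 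Thus $(\widetilde{G}) = (I,g)$.

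The substance is the initial-ideal statement. I would take an arbitrary nonzero $f \in (I,g)$ and show $\ini{f}$ is divisible by $\ini{w}$ for some $w \in \widetilde{G}$. Write $f = p + qg$ with $p \in I$. Reducing $q$ modulo $G$, write $q \equiv \sum_i c_i x^{\alpha_i} \mo G$ for scalars $c_i \in K$ (a possibly infinite but, for any fixed $f$, effectively finite sum, since $q$ has bounded degree and the relevant standard monomials in each degree are finite in number). Then modulo $G$,
\begin{equation*}
f \equiv qg \equiv \sum_i c_i (x^{\alpha_i} g) \equiv \sum_i c_i h_i \equiv \sum_i c_i' v_i \mo G,
\end{equation*}
where the last step uses that the $v_i$ span the same $K$-space as the $h_i$ (the row operations being invertible). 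If the right-hand combination $\sum_i c_i' v_i$ is nonzero, then because the nonzero $v_i$ have pairwise distinct leading monomials, no cancellation of leading terms occurs, and $\ini{(\sum c_i' v_i)} = \ini{v_j}$ for the index $j$ achieving the largest leading monomial among those with $c_j' \neq 0$. Hence $f \equiv (\text{something with initial term } \ini{v_j}) \mo G$: writing $f - \sum c_i' v_i = t \in I$, either $\ini{f}$ equals $\ini{v_j}$ or a monomial in $t$'s reduction — but the point is $\ini{f} \in \ini{(G \cup \{v_j\})}$. If instead $\sum c_i' v_i = 0 \mo G$, then $f \in I$ and $\ini{f} \in \ini{I} = \ini{(G)}$. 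Either way $\ini{f} \in \ini{(\widetilde{G})}$.

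The step that needs the most care — and that I expect to be the main obstacle to write cleanly — is controlling the leading terms through the chain of congruences above: one must argue that reducing $f$ modulo $G$ and reducing $\sum_i c_i' v_i$ modulo $G$ produce the same normal form (true because they are congruent mod $G$ and normal forms are unique once $G$ is a Gröbner basis of $I$), and then that this common normal form has initial term controlled by $\max_j \ini{v_j}$ over the support. The distinctness of the $\ini{v_i}$ is exactly what prevents the leading term from being killed by cancellation; without it the argument collapses. A secondary subtlety is bookkeeping when $N = \infty$: all sums in sight are finite for any particular $f$ because $q$ and the $v_i$ have bounded degree, so the infinite index set causes no real trouble, but this should be remarked. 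Once these points are in place the theorem follows by the standard characterization of Gröbner bases.
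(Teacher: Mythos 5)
Your proposal is correct and follows essentially the same route as the paper's (commented-out) proof: decompose $f \in (I,g)$ as $f \equiv qg \pmod G$ with $q$ reduced to a combination of standard monomials, pass through Equation (\ref{GGV3}) to Equation (\ref{GGV5}) via the invertible (lower-triangular, row-finite) change of basis to rewrite this as $\sum c_i' v_i \pmod G$, and then use the distinctness of the $\ini{v_i}$ to control the leading term. The paper phrases the conclusion as ``$f$ reduces to $0$ by $\widetilde{G}$'' whereas you phrase it as ``$\ini{f}$ lies in $\ini{(\widetilde{G})}$''; these are equivalent characterizations and the mechanism is identical. One place where your write-up could be tightened: when you write $f = \bigl(\sum c_i' v_i\bigr) + t$ with $t \in I$, the clean reason no cancellation of leading terms occurs between the two summands is that $\sum c_i' v_i$ is supported on standard monomials (so its leading monomial is \emph{not} in $\ini{I}$) while $\ini{t} \in \ini{I}$; hence $\ini{f}$ is the larger of the two, and both possibilities land in $\ini{(\widetilde{G})}$. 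You gesture at this but don't quite say it; the paper sidesteps it entirely by invoking reduction to zero. Your remark on finiteness when $N=\infty$ matches the paper's observation that the transforming matrix $U$ is lower triangular with row-finite entries.
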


\section{Gröbner bases of generic ideals}\label{genericsb}

Now, let us return to the generic setting. We use here the definition of generic ideals from \cite{MR1283740}, which is more suitable for our approach. Suppose $K$ is an extension of a base field $F$.

\begin{definition}\index{generic polynomial} \begin{enumerate}[{\normalfont (i)}]
\item A polynomial $f \in R$ of degree $d$ is called \emph{generic} over $F$ if
$$f = \sum_{\alpha} c_{\alpha} x^{\alpha},$$
where the sum runs over all monomials of degree $d$ in $R$, and the coefficients $c_{\alpha}$ are algebraically independent over $F$.

\item An ideal $I \subset R$ \index{generic ideal}is generic if it is generated by generic polynomials $f_1, \dots, f_r$ with all the coefficients algebraically independent over $F$. $I$ is a generic ideal of type $(n, d_1, \dots, d_r)$ if $I$ is a generic ideal in $R$ generated by generic polynomials of degree $d_1, \dots, d_r$.
\end{enumerate}
\end{definition}

We apply the method above to generic ideals. Let $S = K[x_1, \dots, x_n, z]$ denote the polynomial ring in $n+1$ variables. Let $f_1, \dots, f_n$ and $g$ be generic polynomials in $S$ such that $I = (f_1, \dots, f_n)$ is a generic ideal of type $(n+1, d_1, \dots, d_n)$, and $(f_1, \dots, f_n, g)$ is a generic ideal of type $(n+1, d_1, \dots, d_n, d)$. Let $G$ denote the reduced Gröbner basis of $I$ with respect to the grevlex order. Let $E$ be the set of standard monomials with respect to $I$. We denote by $\mathbf{E}$ the column vector whose entries are the monomials in $E$ listed in decreasing order. The set of elements of degree $i$ in $E$ is denoted by $E_i$.

Reducing $g$ modulo $G$ we obtain a polynomial that is a linear combination of monomials of degree $d$ in $E$ with coefficients that are still algebraically independent over $F$, and also algebraically independent over the extension of $F$ generated by the coefficients of elements of $G$. Thus, from now on we assume that $g$ is reduced modulo $G$, that is, we take $g$ to be a linear combination of monomials in $E_d$ with coefficients algebraically independent over the extension of $F$ generated by coefficients of elements of $G$.

Let $M$ be the matrix satisfying
\begin{equation} \label{matrixform} \mathbf{E} g \equiv M \mathbf{E} \mo G. \end{equation}

Note that all polynomials involved are homogeneous. So, for a monomial $m \in E_i$, the product $mg$ is homogeneous and its reduced form is a homogenous polynomial of degree $i+d$, that is, a $K$-linear combination of monomials in $E_{i+d}$ only. Also, the row operations can only be performed using two rows containing polynomials of the same degree. Thus, we consider rows of different degrees separately. Let $M_i$ denote the matrix such that
\begin{equation}\label{matrixmi} \mathbf{E}_i g \equiv M_i \mathbf{E}_{i+d} \mo G, \end{equation}
where $\mathbf{E}_i$ denotes the column vector whose entries are the monomials in $E_i$ listed in decreasing order.

\begin{lemma}
The rows of $M_i$ are linearly independent.
\end{lemma}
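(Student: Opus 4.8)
The plan is to reinterpret $M_i$ as the matrix of a multiplication map and then reduce the statement to a standard fact about generic forms. First I would note that, since $E_i$ and $E_{i+d}$ are the sets of standard monomials of degrees $i$ and $i+d$, they are $K$-bases of the graded pieces $(S/I)_i$ and $(S/I)_{i+d}$. Reading the defining relation $\mathbf{E}_i g \equiv M_i\,\mathbf{E}_{i+d} \mo G$ one row at a time then shows that the $j$-th row of $M_i$ records the coordinates, in the basis $E_{i+d}$, of the class of $m_j g$ in $(S/I)_{i+d}$, where $m_j$ is the $j$-th monomial of $E_i$. In other words, $M_i$ is precisely the matrix, in these monomial bases, of the multiplication map
\[ \mu_i \colon (S/I)_i \longrightarrow (S/I)_{i+d}, \qquad \overline{m} \longmapsto \overline{m g}, \]
so its rows are linearly independent if and only if $\mu_i$ is injective. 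It therefore suffices to prove that $g$ is a nonzerodivisor on $S/I$.

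For that I would use the hypothesis that $(f_1, \dots, f_n, g)$ is a generic ideal of type $(n+1, d_1, \dots, d_n, d)$: it is generated by $n+1$ generic forms in the $n+1$ variables $x_1, \dots, x_n, z$, and $n+1$ generic forms in $n+1$ variables form a regular sequence. The standard justification is an upper-semicontinuity argument: under the monomial specialization $f_i = x_i^{d_i}$, $g = z^{d}$ the quotient $S/(f_1, \dots, f_n, g)$ is Artinian; for each fixed degree $D$, the vanishing of the degree-$D$ component of $S/(f_1, \dots, f_n, g)$ is a Zariski-open condition on the coefficients of the generators; and vanishing in one sufficiently large degree forces vanishing in all larger degrees. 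Hence the quotient is Artinian for generic coefficients, i.e.\ $f_1, \dots, f_n, g$ is a regular sequence, and in particular $g$ is a nonzerodivisor modulo $(f_1, \dots, f_n) = I$. Consequently each $\mu_i$ is injective, which proves the lemma. (Alternatively: $f_1, \dots, f_n$ is already a regular sequence, so $A = S/I$ is a one-dimensional Cohen--Macaulay ring; its associated primes are then finitely many homogeneous primes, none of them the irrelevant ideal, and since the coefficients of $g$ are algebraically independent over the field generated over $F$ by those of $f_1, \dots, f_n$, the form $g$ lies in none of these primes and is a nonzerodivisor on $A$.)

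The identification of $M_i$ with the matrix of $\mu_i$, and the passage from the nonzerodivisor property of $g$ to the injectivity of each $\mu_i$, are purely formal. The only step that requires a little care is the regular-sequence claim: one must be sure that the open conditions cutting out the exceptional locus in coefficient space (or, in the alternative argument, the finitely many associated primes to be avoided) are defined over the field over which the coefficients of $g$ are known to be algebraically independent. I do not anticipate any real difficulty beyond this bookkeeping; at bottom the lemma simply records that multiplication by a generic form is injective on a generic complete intersection of positive dimension.
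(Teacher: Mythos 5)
Your proof is correct and takes essentially the same approach as the paper: both reduce the claim to the statement that $g$ is a nonzerodivisor on $S/I$ (you phrase this as injectivity of the multiplication map $\mu_i$; the paper directly tests a vanishing linear combination of rows), a fact the paper simply invokes with ``since $g$ is regular'' and you expand into the standard genericity/regular-sequence justification.
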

\begin{proof}
Denote the rows of $M_i$ by $\mathbf{v}_1, \dots, \mathbf{v}_{\ell}$, and suppose $\mathbf{E}_i = (m_1, \dots, m_{\ell})^{\mathrm{T}}$. Assume $c_1\mathbf{v}_1 + \cdots + c_{\ell}\mathbf{v}_{\ell} =0$, with $c_1, \dots, c_{\ell} \in K$. Then
$$ (c_1 m_1 + \cdots + c_{\ell} m_{\ell})g \equiv c_1\mathbf{v}_1 + \cdots + c_{\ell}\mathbf{v}_{\ell} \equiv 0 \mo G.$$
Since $g$ is regular, $g$ is not a zero divisor in $R/I$, and it follows that $c_1m_1 +\cdots + c_{\ell}m_{\ell} = 0$ in $R/I$. Since the monomials in $E_i$ are $K$-linear independent, it follows that $c_j =0$ for all $1 \leq j \leq \ell$. Hence, $\mathbf{v}_1, \dots, \mathbf{v}_{\ell}$ are linearly independent.
\end{proof}

Thus, each matrix $M_i$ has rank $|E_i|$. To be able to describe $\ini{I,g}$, we need to see which columns are linearly independent. The monomials in $E_{i+d}$ corresponding to the linearly independent columns are the ones that will be added to $\ini{I}$ to form $\ini{I,g}$. Note that some of the monomials might be redundant, that is, multiples of monomials previously added to the basis. For the Moreno-Socías Conjecture to be true, the columns corresponding to the greatest non-redundant monomials in $E_{i+d}$ should be linearly independent.

Define $\pi : S \longrightarrow R$ to be the ring homomorphism that takes $z$ to zero, fixing the elements in $K$ and the variables $x_1, \dots, x_n$. Let $J = \pi(I) \subset R$ be the image of $I$. Then $J$ is a generic ideal of type $(n, d_1, \dots, d_n)$ in $R$, and using a property of the reverse lexicographic order we have that $\pi(\ini{I}) = \ini{J}$ (see \cite[Proposition 15.12]{MR1322960}. Moreover, since $z$ is regular in $S/I$, by Theorem 15.13 from \cite{MR1322960}, $z$ is regular in $S/\ini{I}$, and by Proposition 15.14, also from \cite{MR1322960}, we have that $\ini{I}$ is generated by monomials that are not divisible by $z$. Thus, $\ini{I}$ and $\ini{J}$ have the same minimal generators. Let $B \subset R$ denote the set of standard monomials with respect to $J$. It follows that
\begin{eqnarray*}
E & = & \{mz^{\ell} | m \in B,\ \ell\geq 0 \} \\
  & = & B \cup zB \cup z^2B \cup z^3B \cup \cdots .
\end{eqnarray*}

Now, $J$ is a generic ideal in $R$ generated by $n$ generic forms, and for this type of ideals some properties are known. For instance, $B_i = 0$, for all $i > \delta$, where $\delta = d_1 + \cdots + d_n - n$. So, for each $0 \leq i \leq \delta$,
$$E_i = B_i \cup zB_{i-1} \cup z^2B_{i-1} \cup \cdots \cup z^{i-1}B_1 \cup z^iB_0,$$
and for $i \geq \delta$,
$$E_i = z^{i-\delta}B_{\delta} \cup z^{i-\delta+1}B_{\delta-1} \cup \cdots \cup z^{i-1}B_1 \cup z^iB_0.$$
This implies that for $i > \delta$, $\mathbf{E}_i = z^{i-\delta} \mathbf{E}_{\delta}$ and
$$ \mathbf{E}_i g \equiv z^{i-\delta} M_{\delta} \mathbf{E}_{\delta+d} \mo G.$$
Thus, the Gröbner basis elements obtained at this point are redundant, and we only need to consider $\mathbf{E}_i g$ for $0 \leq i \leq \delta$.

Also note that, since $E_{i} = zE_{i-1} \cup B_{i}$, $M_{i-1}$ is a submatrix of $M_{i}$ for all $1 \leq i \leq \delta$, and all matrices $M_i$ are submatrices of $M_{\delta}$. The rows and columns of $M_{\delta}$ can be indexed by the elements of $B$. For $0 \leq i \leq \delta$, $M_i$ is formed by blocks $\Gamma_{j,k}$, for $0 \leq j \leq i$ and $0 \leq k \leq d+i$, where the entries of $\Gamma_{j,k}$ are the coefficients of the monomials in $z^{d+i-k}B_k$ in the reduced form of the polynomials in $z^{i-j}gB_j$. So $M_i$ can be written as
\[ M_{i} =
  \qbordermatrix[()]{ne}{%
     \mathbf{B}_{d+i} & \mathbf{B}_{d+i-1} & \cdots & \mathbf{B}_{1} & \mathbf{B}_{0} & \cr
     \Gamma_{i,d+i} & \Gamma_{i,d+i-1}& \cdots & \Gamma_{i,1} & \Gamma_{i,0} & \mathbf{B}_{i} \cr
      \Gamma_{i-1,d+i} & \Gamma_{i-1,d+i-1} & \cdots & \Gamma_{i-1,1} & \Gamma_{i-1,0} & \mathbf{B}_{i-1} \cr
      \vdots & \vdots & \ddots & \vdots & \vdots & \vdots \cr
      \Gamma_{1,d+i} & \Gamma_{1,d+i-1} & \cdots & \Gamma_{1,1} & \Gamma_{1,0} & \mathbf{B}_1 \cr
      \Gamma_{0,d+i} & \Gamma_{0,d+i-1} & \cdots & \Gamma_{0,1} & \Gamma_{0,0} & \mathbf{B}_{0} \cr
  },
\]
and Equation (\ref{matrixmi}) takes the form
\begin{equation*}
\begin{pmatrix}
  \mathbf{B}_i \\
  z\mathbf{B}_{i-1} \\
  \vdots \\
  z^i\mathbf{B}_{0} \\
\end{pmatrix} g \equiv \begin{pmatrix}
                         \Gamma_{i,d+i} & \Gamma_{i,d+i-1} & \cdots & \Gamma_{i,0} \\
                         \Gamma_{i-1,d+i} & \Gamma_{i-1,d+i-1} & \cdots & \Gamma_{i-1,0} \\
                           &   & \ddots  &   \\
                         \Gamma_{0,d+i} & \Gamma_{0,d+i-1} & \cdots & \Gamma_{0,0} \\
                       \end{pmatrix} \begin{pmatrix}
                                       \mathbf{B}_{d+i} \\
                                       z\mathbf{B}_{d+i-1} \\
                                       \vdots \\
                                       z^{d+i}\mathbf{B}_0 \\
                                     \end{pmatrix} \mo G.
\end{equation*}

\subsection{Standard bases of generic ideals and the structure of the blocks $\Gamma_{j,k}$}

Our goal is to study the structure of the blocks $\Gamma_{j,k}$. First we establish some properties of the sets $B_i$.

As noted before, the ideal $J = \pi(I)$ is a generic ideal in $R$, generated by the generic polynomials $\pi(f_1), \dots, \pi(f_n)$, with $\deg(\pi(f_i)) = d_i$, for $1 \leq i \leq n$. Let $A = R/J$, and define
\begin{eqnarray*}
\delta & = & d_1 + \cdots + d_n - n, \\
\delta^* & = & d_1 + \cdots + d_{n-1} - (n-1), \\
\sigma & = & \min\{\delta^*, \lfloor \delta/2\rfloor\}, \\
\mu & = & \delta - 2 \sigma.
\end{eqnarray*}
The Hilbert series of $A$ is known to be a symmetrical polynomial of degree $\delta$, given by
$$ S(z) = \frac{\prod_{j=1}^{n} (1-z^{d_j})}{(1-z)^n} = \sum_{\nu=0}^{\delta} a_{\nu}z^{\nu},$$
with $0 < a_0 < \cdots < a_{\sigma} = \cdots = a_{\sigma + \mu} > \cdots > a_{\delta} > 0$ (see \cite[Proposition 2.2]{MR1966660}). Here $a_{\nu} = |B_{\nu}|$.

To prove the properties of $B$ we need in our proofs, we use a result from \cite{MR1966660}. For $e \geq 0$, define
\begin{equation*}
\widetilde{B}^e  =  \{ x_1^{\alpha_1}\cdots x_{n-1}^{\alpha_{n-1}} | x_1^{\alpha_1}\cdots x_{n-1}^{\alpha_{n-1}}x_n^e \in B \} \subset K[x_1, \dots, x_{n-1}].
\end{equation*}

\begin{proposition}[Moreno-Socías \cite{MR1966660}]\label{propMS} With the notation above,
$$ \widetilde{B}^0 = \widetilde{B}^1 = \cdots = \widetilde{B}^{\mu},$$
$$\widetilde{B}^{\mu+1} = \widetilde{B}^{\mu+2}, \dots, \widetilde{B}^{\delta-1} = \widetilde{B}^{\delta},$$
and
$$\widetilde{B}^{\delta-2\lambda} = \{ m \in \widetilde{B}^0 | \deg(m) \leq \lambda\},$$
for $0 \leq \lambda < \sigma$.
\end{proposition}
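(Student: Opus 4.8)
The plan is to reduce each set equality in the statement to a numerical identity between Hilbert functions, and then to extract those from the known shape of the Hilbert function of $A=R/J$. The first thing to record is that each $\widetilde{B}^e$ is an \emph{order ideal} of monomials in $K[x_1,\dots,x_{n-1}]$: if $m'\mid m$ and $m\in\widetilde{B}^e$, then $m'x_n^e\mid m x_n^e\notin\ini{J}$, so $m'\in\widetilde{B}^e$. Moreover the chain $\widetilde{B}^0\supseteq\widetilde{B}^1\supseteq\widetilde{B}^2\supseteq\cdots$ is decreasing, because $m x_n^e\notin\ini{J}$ forces $m x_n^{e-1}\notin\ini{J}$. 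Consequently, for $e<e'$ it suffices, in order to prove $\widetilde{B}^e=\widetilde{B}^{e'}$, to show $|\widetilde{B}^e_t|=|\widetilde{B}^{e'}_t|$ in every degree $t$; and to prove $\widetilde{B}^{\delta-2\lambda}=\{m\in\widetilde{B}^0:\deg m\le\lambda\}$ it suffices, using $\widetilde{B}^{\delta-2\lambda}\subseteq\widetilde{B}^0$, to show $|\widetilde{B}^{\delta-2\lambda}_t|=|\widetilde{B}^0_t|$ for $t\le\lambda$ and $\widetilde{B}^{\delta-2\lambda}_t=\emptyset$ for $t>\lambda$.

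The heart of the matter is the identity
\[ |\widetilde{B}^e_t| \;=\; \min(a_t,a_{t+e})-\min(a_{t-1},a_{t+e}), \qquad e\ge0,\ t\ge0 . \]
To establish it, write $A'=R/\ini{J}$. In the monomial ring $A'$, the standard monomials of $J$ of degree $\nu$ that are divisible by exactly $x_n^e$ form a basis of $(x_n^eA')_\nu/(x_n^{e+1}A')_\nu$, and these are precisely the $m x_n^e$ with $m\in\widetilde{B}^e_{\nu-e}$; hence $|\widetilde{B}^e_t|=\dim_K(x_n^eA')_{t+e}-\dim_K(x_n^{e+1}A')_{t+e}$. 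Next, since $J$ is generic, $\ini{J}=\gin{J}$, so $x_n$ behaves like a generic linear form; using the compatibility of the reverse lexicographic order with cutting by (powers of) $x_n$ one gets $\rank\bigl(A'_{\nu-e}\overset{x_n^e}{\longrightarrow}A'_\nu\bigr)=\rank\bigl(A_{\nu-e}\overset{x_n^e}{\longrightarrow}A_\nu\bigr)$, and, $x_n$ being a strong Lefschetz element for the Artinian complete intersection $A$, this common rank is $\min(a_{\nu-e},a_\nu)$. Therefore $\dim_K(x_n^eA')_\nu=\min(a_{\nu-e},a_\nu)$, and taking $\nu=t+e$ yields the displayed formula. (One may also compute these Hilbert functions via generic hyperplane sections and Stanley's theorem for $n$ forms in $n-1$ variables.)

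Granting the formula, the three assertions reduce to casework on the profile $0<a_0<\cdots<a_\sigma=\cdots=a_{\delta-\sigma}>\cdots>a_\delta>0$, $a_\nu=a_{\delta-\nu}$. For $0\le e\le\mu$: if $t\le\sigma$ then $t+e\le\sigma+\mu=\delta-\sigma$, so $a_{t+e}=a_\sigma\ge a_t\ge a_{t-1}$ and $|\widetilde{B}^e_t|=a_t-a_{t-1}$; if $t>\sigma$ then $a_{t-1}\ge a_t$, so $|\widetilde{B}^e_t|=0$. Hence $|\widetilde{B}^e_t|=|\widetilde{B}^0_t|$ for all $t$, and with the nesting $\widetilde{B}^e=\widetilde{B}^0$. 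For $e=\delta-2\lambda$ with $0\le\lambda<\sigma$: by symmetry $a_{t+e}=a_{2\lambda-t}$, and for $t\le\lambda$ one has $2\lambda-t\ge t$, whence $a_{2\lambda-t}\ge a_t\ge a_{t-1}$ throughout the relevant range and $|\widetilde{B}^e_t|=a_t-a_{t-1}=|\widetilde{B}^0_t|$; for $t>\lambda$ a direct check gives $|\widetilde{B}^e_t|=0$. Thus $\widetilde{B}^{\delta-2\lambda}=\{m\in\widetilde{B}^0:\deg m\le\lambda\}$. Finally, for the middle chain, put $e=\mu+2j-1$ and $e'=e+1$ with $1\le j\le\sigma$: raising the exponent by one moves $t+e$ from the boundary of the plateau one step into the strictly decreasing range, where, after applying $a_{t+e'}=a_{\delta-t-e'}$, one checks that $\min(a_t,a_{t+e})=\min(a_t,a_{t+e'})$ and $\min(a_{t-1},a_{t+e})=\min(a_{t-1},a_{t+e'})$ for every $t$; hence $|\widetilde{B}^e_t|=|\widetilde{B}^{e'}_t|$, i.e.\ $\widetilde{B}^{\mu+2j-1}=\widetilde{B}^{\mu+2j}$, which is exactly the chain $\widetilde{B}^{\mu+1}=\widetilde{B}^{\mu+2},\dots,\widetilde{B}^{\delta-1}=\widetilde{B}^\delta$.

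The step I expect to be the main obstacle is the identity for $|\widetilde{B}^e_t|$ — concretely, the assertion that the jumps of the $x_n$-adic filtration of $R/J$ are counted by standard monomials divisible by the corresponding power of $x_n$, equivalently that $\rank(\times x_n^e)$ agrees on $R/J$ and on $R/\ini{J}$. This is exactly where the special features of the reverse lexicographic order and the genericity of $J$ enter in an essential way: one needs both that $\ini{J}$ equals the generic initial ideal (so that $x_n$ is ``generic enough'' to be a strong Lefschetz element and to restrict well) and that forming initial ideals is compatible with cutting by powers of the last variable. Once that bridge is in place, the remaining two paragraphs are just bookkeeping with the symmetric, unimodal Hilbert function of the generic complete intersection $A$.
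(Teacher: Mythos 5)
The paper does not prove Proposition~\ref{propMS} at all --- it is imported verbatim from Moreno-Soc{\'\i}as \cite{MR1966660} and used as a black box, with the paper's own Lemmas~\ref{propStanley} and~\ref{lefschetzprop} \emph{derived from} it. So there is no internal proof to compare against; I assess your proposal on its own terms.

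The skeleton is sound. The order-ideal and nesting observations are correct, the reduction to degreewise cardinalities is legitimate, and the identity $|\widetilde{B}^e_t| = \dim(x_n^e A')_{t+e}-\dim(x_n^{e+1}A')_{t+e}$ with $A'=R/\ini{J}$ is just counting. The rank-preservation step (b) is the real content and it does hold: one needs $\ini{J+(x_n^e)}=\ini{J}+(x_n^e)$ in the grevlex order for \emph{all} $e$, not only $e=1$, and this is true because in grevlex every monomial of a fixed degree divisible by $x_n^e$ is smaller than every monomial of that degree not divisible by $x_n^e$; hence if $f=g+x_n^eh$ with $g\in J$ and $x_n^e\nmid\ini{f}$, no term of $g$ larger than $\ini{f}$ can cancel against $x_n^eh$, so $\ini{f}=\ini{g}\in\ini{J}$. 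You should spell this out, since it is where the ``compatibility of revlex with cutting by powers of $x_n$'' lives.

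Two caveats. First, the argument hinges on the strong Lefschetz property of the generic Artinian complete intersection $A=R/J$ with $x_n$ as the Lefschetz element. That is a genuine but heavyweight input (Stanley's hard-Lefschetz proof for monomial complete intersections, then semicontinuity for generic ones) and it is a characteristic-zero theorem; the paper only assumes $K$ infinite, and Moreno-Soc{\'\i}as's original argument is a Gr\"obner-basis/Hilbert-function computation that does not go through SLP. Your proof also inverts the paper's logical flow --- the paper deduces the Stanley and Lefschetz behavior of $B$ from Prop.~\ref{propMS}, while you deduce Prop.~\ref{propMS} from SLP of $A$. This is not circular (you invoke SLP of $A$, not of $A'$, and that can be obtained independently), but it does make your proof strictly less general. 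Second, a local error in the middle-chain case: with $e=\mu+2j-1$, $e'=e+1$, $\lambda=\sigma-j$ and $t=\lambda+1$ one has $a_{t+e}=a_{\lambda}$, $a_{t+e'}=a_{\lambda-1}$, $a_t=a_{\lambda+1}$, and since $\lambda+1\le\sigma$ these are strictly decreasing, so $\min(a_t,a_{t+e})=a_\lambda\ne a_{\lambda-1}=\min(a_t,a_{t+e'})$; your termwise equalities of minima are false. The \emph{differences} $|\widetilde{B}^e_t|$ and $|\widetilde{B}^{e'}_t|$ do agree (both vanish at this $t$), so the conclusion survives, but the justification has to argue directly about the difference rather than about the two minima separately.
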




The following lemma implies that $A$ has the strong Stanley property, and $x_n$ is a strong Stanley element for $A$.

\begin{lemma}\label{propStanley} Let $0 \leq i \leq \frac{\delta}{2}$. Then $B_{\delta-i} = x_n^{\delta -2i} B_i$.
\end{lemma}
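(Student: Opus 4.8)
The plan is to prove the set equality $B_{\delta-i} = x_n^{\delta-2i}B_i$ by showing both inclusions, using Proposition \ref{propMS} to control the $x_n$-exponents that appear among the standard monomials in each degree. Since $A=R/J$ is zero-dimensional and its Hilbert function is symmetric about $\delta/2$, we already know $|B_{\delta-i}| = a_{\delta-i} = a_i = |B_i|$, so it suffices to prove one inclusion, say $x_n^{\delta-2i}B_i \subseteq B_{\delta-i}$; the reverse inclusion then follows by counting. Actually it is cleaner to prove $x_n^{\delta-2i}B_i\subseteq B_{\delta-i}$ directly and separately argue the map $m\mapsto x_n^{\delta-2i}m$ is injective (which is obvious) to get equality from the dimension count.

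First I would translate the statement into the language of the sets $\widetilde B^e$. A monomial $x_1^{\alpha_1}\cdots x_{n-1}^{\alpha_{n-1}}x_n^{e}$ of degree $\delta-i$ lies in $B_{\delta-i}$ exactly when $x_1^{\alpha_1}\cdots x_{n-1}^{\alpha_{n-1}}\in\widetilde B^{e}$ and $\alpha_1+\cdots+\alpha_{n-1}=\delta-i-e$. Multiplying an element of $B_i$ by $x_n^{\delta-2i}$ raises the $x_n$-exponent by $\delta-2i$ and fixes the $x_1,\dots,x_{n-1}$ part, so the claim $B_{\delta-i}=x_n^{\delta-2i}B_i$ amounts to the statement that for every $e$, the degree-$(\delta-i-e)$ part of $\widetilde B^{e}$ coincides with the degree-$(\delta-i-e)$ part of $\widetilde B^{e-(\delta-2i)}$ (the latter interpreted as empty when $e<\delta-2i$); equivalently, for $e\ge\delta-2i$ one needs $\{m\in\widetilde B^{e}:\deg m=\delta-i-e\}=\{m\in\widetilde B^{e-\delta+2i}:\deg m=\delta-i-e\}$.

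Next I would feed this into Proposition \ref{propMS}. Write $e' = e-(\delta-2i)$. The two exponents $e,e'$ differ by $\delta-2i\ge 0$, and the relevant degree is $\lambda:=\delta-i-e$, which satisfies $\lambda\le i$ (since $e\ge\delta-2i$). I would split into the cases governed by the proposition: when both $e,e'$ lie in the "stable low" range $[0,\mu]$ we have $\widetilde B^{e}=\widetilde B^{e'}$ outright; when $e$ lies in the range where $\widetilde B^{\delta-2\lambda'}=\{m\in\widetilde B^0:\deg m\le \lambda'\}$ for the appropriate $\lambda'$, I would compute $\lambda'$ from $e$ and $e'$ and check that the degree-$\lambda$ truncation is the same for both — the point being that truncating $\{m\in\widetilde B^0:\deg m\le\lambda'\}$ to degree exactly $\lambda$ gives the same answer as long as $\lambda\le\lambda'$, and the constraints force exactly this. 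The boundary/parity bookkeeping between the $\mu$-range and the $\sigma$-range (and the two interleaved chains $\widetilde B^{\mu+1}=\widetilde B^{\mu+2},\dots$) is where the argument is fiddliest.

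The main obstacle I anticipate is precisely this case analysis: Proposition \ref{propMS} describes the chain $\widetilde B^0\supseteq\widetilde B^1\supseteq\cdots$ only in blocks with a specific parity pattern around $\sigma$ and $\mu$, so one must carefully verify that for each pair $(e,e')$ with $e-e'=\delta-2i$ the two sets agree after restricting to the single degree $\delta-i-e$, handling separately whether $e$ (hence $e'$) falls in the constant initial segment, in the strictly decreasing tail, or straddles the transition at $\mu$. I would organize this by fixing $i$, letting $e$ run from $\delta-2i$ up to $\delta-i$, and in each subrange invoking the matching clause of the proposition; a short lemma that "$\{m\in\widetilde B^0:\deg m\le a\}$ restricted to degree $b$ equals $\widetilde B^0$ restricted to degree $b$ whenever $b\le a$" will absorb most of the routine verification. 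Once all $e$ are covered we get $B_{\delta-i}\subseteq x_n^{\delta-2i}B_i$ and, combined with the trivial reverse-containment-up-to-injectivity and the Hilbert-function symmetry $a_i=a_{\delta-i}$, the desired equality follows.
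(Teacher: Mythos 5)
Your setup is the same as the paper's: translate the claim into the $\widetilde{B}^e$ language of Proposition~\ref{propMS} and compare truncation thresholds, with a case split depending on whether the two relevant $x_n$-exponents fall in the stable initial segment ($\leq \mu$) or the decreasing tail. Your observation that it suffices to prove one inclusion and invoke $|B_i| = a_i = a_{\delta-i} = |B_{\delta-i}|$ together with injectivity of $m\mapsto x_n^{\delta-2i}m$ is correct and is a small genuine simplification that the paper does not use (the paper proves the set equality $\widetilde{B}_i^e = \widetilde{B}_{\delta-i}^{e+\delta-2i}$ outright). However, what you have written is an outline, not a proof: the entire substance of the argument is the verification, for each configuration of $e$ and $e'=e\pm(\delta-2i)$ relative to $\mu$ and $\sigma$, that the truncation threshold $\lambda$ or $\lambda'$ extracted from Proposition~\ref{propMS} lies on the correct side of the degree $i-e$ under consideration. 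The paper carries this out in five explicit sub-cases (the two $e\leq\mu$ sub-cases and then Cases 1--4, each depending on the parity of $\delta$ and of $e$), each reduced to a short inequality chain like $\lambda' = \lambda + i - \tfrac{\delta}{2} = i - \tfrac{e}{2}\geq i-e$. You gesture at this (``the constraints force exactly this'') but do not do it; that is where the lemma actually lives.

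There is also an internal inconsistency you should fix. You first announce you will prove $x_n^{\delta-2i}B_i\subseteq B_{\delta-i}$ and deduce equality by counting, but then parameterize by the $x_n$-exponent $e$ of a monomial in $B_{\delta-i}$ and conclude ``we get $B_{\delta-i}\subseteq x_n^{\delta-2i}B_i$.'' These are different inclusions. If you genuinely want the first, parameterize by the $x_n$-exponent of an element of $B_i$ (so $0\leq e\leq i$), as the paper does, and the issue of negative shifted exponents disappears. If instead you want the second, then the parenthetical ``the latter interpreted as empty when $e<\delta-2i$'' hides a nontrivial claim: you would have to show that the degree-$(\delta-i-e)$ part of $\widetilde B^e$ is also empty when $e<\delta-2i$, which is not automatic and is in fact part of what $B_{\delta-i}\subseteq x_n^{\delta-2i}B_i$ asserts. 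Pick one direction, make the bookkeeping match it, and then carry out the $\mu$/$\sigma$/parity case analysis explicitly.
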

\begin{proof}
Note that $B_{\delta-i} = x_n^{\delta-2i} B_i$ if and only if $\widetilde{B}_{i}^e = \widetilde{B}_{\delta-i}^{e+\delta-2i}$, for all $e \geq 0$. For $e > i$, we have $ \widetilde{B}_i^e  = \widetilde{B}_{\delta-i}^{e+\delta-2i} = \emptyset$. So we need to show equality for $0 \leq e \leq i$.

If both $e \leq \mu$ and $e+\delta-2i \leq \mu$, then $\widetilde{B}_i^e = \widetilde{B}_{\delta-i}^{e+\delta-2i} = \widetilde{B}_{i-e}^0$.

If $e \leq \mu$ and $e + \delta-2i > \mu$, then $\widetilde{B}_i^e = \widetilde{B}_{i-e}^0$, and $\widetilde{B}_{\delta-i}^{e+\delta-2i} = \{ m \in \widetilde{B}_{i-e}^0 | \deg(m) \leq \lambda\}$, where $e+\delta-2i = \delta-2\lambda$ or $e+\delta-2i = \delta-2\lambda-1$. We need to see that $i-e \leq \lambda$. In the first case we have
$$\lambda = \frac{2i-e}{2} \geq \frac{2i-2e}{2}= i-e,$$
and in the second case,
$$ \lambda = \frac{2i-e-1}{2} \geq \frac{2i-e-e}{2} = i-e,$$
as $e \geq 1$.

Now, if $e > \mu$, then $$\widetilde{B}^{e} = \{ m \in \widetilde{B}^0 | \deg(m) \leq \lambda\},$$ where $e = \delta-2\lambda$ or $e = \delta-2\lambda-1$, and $$\widetilde{B}^{e+\delta-2i} =\{ m \in \widetilde{B}^0 | \deg(m) \leq \lambda'\},$$ where $e +\delta-2i = \delta-2\lambda'$ or $e +\delta-2i = \delta-2\lambda'-1$. We want to see that $i-e \leq \lambda$ if and only if $i-e \leq \lambda'$.

Case 1: Suppose $e = \delta -2\lambda$ and $e+\delta-2i = \delta -2\lambda'$. This happens when $\delta$ is even, giving $\lambda'= \lambda+i-\frac{\delta}{2}$. If $ i-e \leq \lambda$, then $ \lambda' = \lambda + i -\frac{\delta}{2} = i - \frac{e}{2} \geq i-e$. If $ i-e \leq \lambda'$, then $\lambda \geq \lambda +i -\frac{\delta}{2} = \lambda' \geq i-e$, as $i-\frac{\delta}{2} \leq 0$.

Case 2: Suppose $e = \delta-2\lambda$ and $e+\delta-2i = \delta-2\lambda'-1$, with $\delta$ odd and $\lambda'=\lambda +i-\frac{\delta-1}{2}$. If $i-e \leq \lambda$, then $\lambda' = \lambda + i -\frac{\delta-1}{2} = i - \frac{e}{2} \geq i-e$. And if $ i-e \leq \lambda'$, then $ \lambda \geq \lambda +i -\frac{\delta-1}{2} = \lambda' \geq i-e$, as $i-\frac{\delta-1}{2} \leq 0$.

Case 3: Suppose $e = \delta-2\lambda-1$ and $e +\delta-2i = \delta-2\lambda'$, with $\delta$ odd and $\lambda'= \lambda+i-\frac{\delta+1}{2}$. If
$ i-e \leq \lambda$, then $ \lambda'= \lambda +i-\frac{\delta+1}{2} = i - \left( \frac{e}{2}+1 \right) \geq i-e$, as $e > \mu \geq 1$ ($\mu=0$ would contradict the fact that $\delta$ is odd). If $ i-e \leq \lambda'$, then $ \lambda = \lambda +i - \frac{\delta+1}{2} = i - \left( \frac{e}{2} +1 \right) \geq i-e$.

Case 4: Finally, Suppose $ e = \delta-2\lambda-1$ and $e+\delta-2i = \delta-2\lambda'-1$, with $\delta$ an even integer and $\lambda' = \lambda +i - \frac{\delta}{2}$. If $ i-e \leq \lambda$, then $ \lambda'= \lambda +i - \frac{\delta}{2} = i - \frac{e+1}{2} \geq i-e$, as $e > \mu \geq 0$. If
$ i-e \leq \lambda'$, then $\lambda \geq \lambda +i - \frac{\delta}{2} = i- \frac{e+1}{2} \geq i-e$.
\end{proof}

The next lemma establishes that $A$ has the strong Lefschetz property, and $x_n$ is a strong Lefschetz element for $A$.

\begin{lemma}\label{lefschetzprop}
Let $0 \leq j \leq \delta$ and $r \geq 0$. Then multiplication by $x_n^r$ from $A_j$ to $A_{j+r}$ is either injective or surjective. More precisely:
\begin{enumerate}[{\normalfont (i)}]
\item Suppose $|B_j| \leq |B_{j+r}|$. Let $S$ denote the subset of $B_{j+r}$ consisting of $|B_j|$ smallest monomials in $B_{j+r}$. Then
$$ S = x_n^r B_{j}. $$
\item Suppose $|B_j| \geq |B_{j+r}|$. Let $S$ denote the subset of $B_j$ consisting of $|B_{j+r}|$ smallest monomials in $B_j$. Then
$$ B_{j+r} = x_n^r S. $$
\end{enumerate}
\end{lemma}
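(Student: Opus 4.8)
The plan is to reduce the strong Lefschetz statement for $A = R/J$ to the structural description of $B$ given in Proposition~\ref{propMS}, in essentially the same way Lemma~\ref{propStanley} was proved, but now tracking a general power $x_n^r$ rather than the single middle power $x_n^{\delta-2i}$. The first observation is that, because $J$ is a generic ideal generated by $n$ forms in $n$ variables, $A$ is Artinian Gorenstein with symmetric Hilbert function $a_0 < \cdots < a_\sigma = \cdots = a_{\sigma+\mu} > \cdots > a_\delta$, so for fixed $j$ and $r$ exactly one of $|B_j| \leq |B_{j+r}|$ or $|B_j| \geq |B_{j+r}|$ holds (both when they are equal), and it suffices to prove, for every $e \geq 0$, the corresponding inclusion/equality between $\widetilde{B}_j^e$ and $\widetilde{B}_{j+r}^{e+r}$: namely $\widetilde{B}_{j+r}^{e+r}$ consists of the $|\widetilde{B}_j^e|$ smallest elements of $\widetilde{B}_j^e$ sitting inside it in case (i), and dually in case (ii). Indeed $x_n^r B_j \subseteq B_{j+r}$ would follow once we know each $x_n^e m \in B_j$ maps to $x_n^{e+r} m \in B_{j+r}$, i.e. $m \in \widetilde{B}_j^e \implies m \in \widetilde{B}_{j+r}^{e+r}$, and reverse-lex minimality is exactly the statement that multiplying by $x_n$ only sends standard monomials to standard monomials until the "degree cap" kicks in.

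Next I would run the case analysis on the size of $e$ relative to $\mu$, exactly as in Lemma~\ref{propStanley}. When $e \leq \mu$ we have $\widetilde{B}_j^e = \widetilde{B}_{j-e}^0 = \{\, m \in \widetilde{B}^0 : \deg m \leq j - e \,\}$ wait--more precisely $\widetilde{B}_j^e$ is the degree-$(j-e)$ piece of $\widetilde{B}^0$; when $e > \mu$, writing $e = \delta - 2\lambda$ or $e = \delta - 2\lambda - 1$, Proposition~\ref{propMS} gives $\widetilde{B}^e = \{\, m \in \widetilde{B}^0 : \deg m \leq \lambda \,\}$, so $\widetilde{B}_j^e = \{\, m \in \widetilde{B}^0 : \deg m = j - e,\ \deg m \leq \lambda\,\}$, and similarly for $\widetilde{B}_{j+r}^{e+r}$ with its own cut-off $\lambda'$. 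The content of the lemma is then a comparison of the pairs $(j-e, \lambda)$ and $(j+r-(e+r), \lambda') = (j-e, \lambda')$ — note the degree shift cancels — together with a monotonicity check: as $e$ increases the cut-off $\lambda$ can only decrease, so among the $e$'s the inclusions are nested, and the question of whether multiplication by $x_n^r$ is injective or surjective is governed entirely by whether $r$ is small enough that the degree cap has not yet been reached at level $j$. Concretely, I expect to show: if $|B_j| \leq |B_{j+r}|$ then for every relevant $e$ the cut-off $\lambda'$ governing $\widetilde{B}_{j+r}^{e+r}$ satisfies $\lambda' \geq j - e$, so $\widetilde{B}_{j+r}^{e+r} \supseteq \widetilde{B}_j^e$ with equality onto the bottom $|\widetilde{B}_j^e|$ monomials in reverse-lex order (which within a fixed total degree $j-e$ in $K[x_1,\dots,x_{n-1}]$ is just the whole degree-$(j-e)$ slice, intersected with the downward-closed set $\widetilde{B}^0$); dually for case (ii). Summing over $e$ recovers the claimed statements about $B_j$, $B_{j+r}$.

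The remaining point that needs care is the passage from the "for all $e$" statements about the $\widetilde{B}^e$ to the statements (i) and (ii) as written, i.e. that $S = x_n^r B_j$ really is the set of the $|B_j|$ \emph{smallest} monomials of $B_{j+r}$ and not merely some $|B_j|$-subset. This is where the reverse-lexicographic order does the work: a monomial $x^\beta x_n^{e}$ of degree $j+r$ with $x^\beta \in K[x_1,\dots,x_{n-1}]$ is smaller than $x^{\beta'} x_n^{e'}$ (same degree) in grevlex precisely when it has the larger $x_n$-exponent, i.e. $e > e'$, breaking ties inside each $\widetilde{B}^e$-slice. So the $|B_j|$ smallest elements of $B_{j+r}$ are obtained by taking all of $\widetilde{B}_{j+r}^{e}$ for the largest values of $e$ first; the nestedness of the $\widetilde{B}^e$ and the inequalities $\lambda' \geq j-e$ proved above guarantee that this greedy bottom segment is exactly $\bigcup_e x_n^{e+r}\widetilde{B}_j^e = x_n^r B_j$. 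I would write this out as a short lemma-internal paragraph rather than invoking anything new. The main obstacle I anticipate is purely bookkeeping: keeping the four parity sub-cases ($\delta$ even/odd, $e = \delta-2\lambda$ vs. $\delta-2\lambda-1$) straight while simultaneously comparing the $j$-level and $(j+r)$-level cut-offs, since unlike in Lemma~\ref{propStanley} the two levels are now genuinely different and $r$ is a free parameter; but each sub-case reduces to an elementary inequality between half-integers of the shape already dispatched in the proof of Lemma~\ref{propStanley}, so no new idea is required.
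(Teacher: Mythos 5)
Your proposal is correct in substance but takes a genuinely longer route than the paper. The paper does not re-open the case analysis over $e$ and $\mu$ at all: it uses Lemma~\ref{propStanley} (already proved) as a black box, observes that the bijective Stanley map $x_n^{\delta-2j}: A_j \to A_{\delta-j}$ factors as $A_j \xrightarrow{x_n^r} A_{j+r} \xrightarrow{x_n^{\delta-2j-r}} A_{\delta-j}$, and reads off injectivity of the first factor; the ``$|B_j|$ smallest monomials'' claim is then a one-line grevlex observation (in grevlex, among monomials of a fixed degree, any monomial smaller than some $x^\alpha x_n^r$ is itself divisible by $x_n^r$). Your plan instead re-runs the $\widetilde{B}^e$ machinery of Proposition~\ref{propMS} directly, now with the free parameter $r$, reproducing the parity case analysis from the proof of Lemma~\ref{propStanley} with an extra degree of freedom. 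This is viable --- the required inequalities of the form $j - e \leq \lambda(e) \Rightarrow j - e \leq \lambda(e+r)$ do reduce to $2j + r \leq \delta + e$-type estimates that follow from $a_j \leq a_{j+r}$ --- but it buys nothing over the paper's factorization trick, costs four parity sub-cases, and you stop short of actually carrying them out. One small correction to your internal bookkeeping: once you unwind the definitions, $\widetilde{B}_j^e$ and $\widetilde{B}_{j+r}^{e+r}$ are both either empty or equal to the full degree-$(j-e)$ slice of $\widetilde{B}^0$ (the degree shift cancels, as you noted), so the claim you actually need is nonemptiness of the second given nonemptiness of the first, not a containment ``onto the bottom $|\widetilde{B}_j^e|$ monomials'' --- those sets coincide outright when both are nonempty. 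Given that you already cite Lemma~\ref{propStanley} as the model, the cleaner move is to \emph{use} it rather than replay its proof; this is precisely what the paper does.
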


\begin{proof}
First, suppose $0 \leq j \leq \delta/2$ and $j+r \leq \delta-j$. Then $|B_j| \leq |B_{j+r}|$. By Lemma \ref{propStanley}, $B_{\delta-j} = x_n^{\delta-2j}B_j$, so multiplication by $x_n^{\delta-2j}$ from $A_{j}$ to $A_{\delta-j}$ is bijective. This multiplication can be seen as the composition
$$ A_j \overset{x_n^r}{\longrightarrow} A_{j+r} \overset{x_n^{\delta-2j-r}}{\longrightarrow} A_{\delta-j}$$
so that multiplication by $x_n^r$ from $A_j$ to $A_{j+r}$ must be injective. Moreover, if $m$ is a monomial in $B_j$, $x_n^{\delta-2j}m$ is in $B_{\delta-j}$, which implies $x_n^r m \in B_{j+r}$. So, $x_n^r B_j \subseteq B_{j+r}$. Suppose $B_j = \{ x^{\alpha_1}, \dots, x^{\alpha_N}\}$, with $x^{\alpha_1} < \dots < x^{\alpha_N}$, and suppose $m$ is a monomial in $B_{j+r}$ such that $m < x^{\alpha_i}x_n^r$. Then $x_n^r$ divides $m$, and $m'= m / x_n^r \in B_j$, with $m'< x^{\alpha_i}$. This proves (i).

Now suppose $0 \leq j \leq \delta/2$ and $j+r \geq \delta-i$. Then $|B_j| \geq |B_{j+r}|$. Let $m$ be a monomial in $B_{j+r}$. Since $B_{j+r} = x_n^{2(j+r)-\delta} B_{\delta-j-r}$, we can write $m = x_n^{2(j+r)-\delta} m'$, for some monomial $m' \in B_{\delta-j-r}$. By the previous paragraph, $m'' = x_n^{2j+r-\delta} m' \in B_j$, so $m = x_n^r m''$. So multiplication by $x_n^r$ is surjective. Moreover, the monomials $m'' \in B_j$ that are taken to $m \in B_{j+r}$ are in the image of $B_{\delta-j-r}$ under multiplication by $x_n^{2j+r-\delta}$, and, by part (i), correspond to the smallest monomials in $B_j$.

If $\delta/2 \geq j \geq \delta$, then $|B_j| \geq |B_{j+r}|$, and the same argument from the previous paragraph works.
\end{proof}

Since $g$ is a combination of all monomials in $E_d = B_d \cup zB_{d-1} \cup \cdots \cup z^d B_0$, we can write
$$ g = \mathbf{v}_{d}\cdot \mathbf{B}_{d} + \mathbf{v}_{d-1}\cdot \mathbf{B}_{d-1}z + \cdots + \mathbf{v}_1 \cdot \mathbf{B}_1z^{d-1} + \mathbf{v}_0\cdot \mathbf{B}_0 z^d,$$
where $\mathbf{v}_i$ is a row vector of coefficients. We denote the last entry of $\mathbf{v}_i$ by $c_i$. The following Lemma gives some of the structure of the blocks $\Gamma_{j,k}$.

\begin{lemma}\label{blocklemma}
Let $0 \leq i \leq \delta$, $0 \leq j \leq i$ and $j \leq k \leq \delta$.
\begin{enumerate}[{\normalfont (i)}]
\item Suppose $|B_j| \leq |B_k|$. Then the entries on the diagonal of the square submatrix of $\Gamma_{j,k}$ formed by the last $|B_j|$ columns have the form $c_{k-j} + L$, where $L$ is linear on other coefficients in $\mathbf{v}_{k-j}$, $\mathbf{v}_{k-j+1}$, $\dots$, $\mathbf{v}_{\delta}$ and does not involve $c_{k-j}$. Also, $c_{k-j}$ does not appear in the other entries of $\Gamma_{j,k}$.
\begin{equation}\label{gammaform1}
\Gamma_{j,k} = \begin{pmatrix}
                 * & \cdots & * & c_{k-j} + L & * & \cdots & * \\
                 * & \cdots & * & * & c_{k-j} + L & \cdots & * \\
                  &  &  &  &  & \ddots &  \\
                 * & \cdots & * & * & * & \dots & c_{k-j} + L \\
               \end{pmatrix}
\end{equation}

\item Suppose $|B_j| \geq |B_k|$. Then the entries on the diagonal of the square submatrix of $\Gamma_{j,k}$ formed by the last (bottom) $|B_k|$ rows have the form $c_{k-j} + L$, where $L$ is linear on other coefficients in $\mathbf{v}_{k-j}, \mathbf{v}_{k-j+1}, \dots, \mathbf{v}_{\delta}$ and does not involve $c_{k-j}$.
\begin{equation}\label{gammaform2}
  \Gamma_{j,k} = \begin{pmatrix}
                   * & * & \cdots & * \\
                    &  & \vdots &  \\
                   * & * & \cdots & * \\
                   c_{k-j} + L & * & \cdots & * \\
                   * & c_{k-j} + L & \cdots & * \\
                    &  & \ddots &   \\
                   * & * & \cdots & c_{k-j} + L \\
                 \end{pmatrix}
\end{equation}

\end{enumerate}
\end{lemma}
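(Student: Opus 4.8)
The plan is to fix a monomial $m\in B_j$ and track precisely which coefficients $v_{\ell,p}$ of $g$ can enter each entry of the row of $\Gamma_{j,k}$ indexed by $m$, paying special attention to the distinguished diagonal entry. Two easy preliminaries will be used throughout: since $\ini{I}$ contains no monomial divisible by $z$, multiplication by $z$ carries standard monomials to standard monomials, so $\operatorname{NF}_G(z^aw)=z^a\operatorname{NF}_G(w)$ for any $w\in S$ and $a\ge 0$; and since $I$ is homogeneous, $G$ is homogeneous, so $\operatorname{NF}_G$ preserves degree.

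First I would expand $g=\sum_{\ell}z^{d-\ell}\sum_{p\in B_\ell}v_{\ell,p}\,p$ and reduce termwise:
$$\operatorname{NF}_G\!\big(z^{i-j}mg\big)=\sum_{\ell}\sum_{p\in B_\ell}v_{\ell,p}\,z^{d+i-j-\ell}\operatorname{NF}_G(mp).$$
The polynomial $\operatorname{NF}_G(mp)$ is homogeneous of degree $j+\ell$, hence a $K$-linear combination of monomials in $E_{j+\ell}=\bigsqcup_{s\le j+\ell}z^{j+\ell-s}B_s$; multiplying by $z^{d+i-j-\ell}$ puts it in the span of $\bigsqcup_{s\le j+\ell}z^{d+i-s}B_s$. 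Comparing with the column block $z^{d+i-k}\mathbf B_k$ of $\mathbf E_{d+i}$ shows that the term $v_{\ell,p}z^{d-\ell}p$ of $g$ contributes to $\Gamma_{j,k}$ only when $j+\ell\ge k$; that is, every entry of $\Gamma_{j,k}$ is a $K$-linear form in the coefficients $v_{\ell,p}$ with $\ell\ge k-j$, i.e.\ in the coefficients of $\mathbf v_{k-j},\mathbf v_{k-j+1},\dots$, and in particular no $c_{\ell}$ with $\ell<k-j$ occurs.

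Next I would isolate $c_{k-j}=v_{k-j,x_n^{k-j}}$: its term in $g$ is $c_{k-j}z^{d-(k-j)}x_n^{k-j}$, contributing $c_{k-j}z^{d+i-k}\,x_n^{k-j}m$ to $z^{i-j}mg$. In case (i), where $|B_j|\le|B_k|$, Lemma \ref{lefschetzprop}(i) says $x_n^{k-j}B_j$ is exactly the set of $|B_j|$ smallest monomials of $B_k$, so $x_n^{k-j}m\in B_k$ and this term needs no reduction: $c_{k-j}$ occurs with coefficient $1$ in the entry $(m,\,x_n^{k-j}m)$ and with coefficient $0$ in every entry $(m,m')$ with $m'\ne x_n^{k-j}m$, since $\operatorname{NF}_G(x_n^{k-j}m)=x_n^{k-j}m$. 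As the columns of $\Gamma_{j,k}$ are listed in decreasing order and $m\mapsto x_n^{k-j}m$ is strictly order preserving, the monomials $x_n^{k-j}m$ fill the last $|B_j|$ columns in the same order as the rows, so the entries $(m,\,x_n^{k-j}m)$ form precisely the diagonal of the square submatrix cut out by those last $|B_j|$ columns. Every remaining contribution to that diagonal entry comes from a $v_{\ell,p}$ with $\ell>k-j$, or with $\ell=k-j$ and $p\ne x_n^{k-j}$, and none of these equals $c_{k-j}$; this yields the form $c_{k-j}+L$ and proves (i). Case (ii), where $|B_j|\ge|B_k|$, runs in parallel with Lemma \ref{lefschetzprop}(ii): here $B_k=x_n^{k-j}S$ with $S$ the $|B_k|$ smallest monomials of $B_j$, these index the bottom $|B_k|$ rows of $\Gamma_{j,k}$, and for $m\in S$ again $x_n^{k-j}m\in B_k$, so $c_{k-j}$ appears with coefficient $1$ along the indicated diagonal; no claim is made for the rows $m\in B_j\setminus S$, where $x_n^{k-j}m\notin B$ and $c_{k-j}$ may survive after reduction.

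None of the reductions of non-standard monomials $mp$ or $x_n^{k-j}m$ needs to be carried out explicitly: all that is used is that they produce $K$-linear combinations of standard monomials, so their contributions are linear forms in the relevant $v_{\ell,p}$ not involving $c_{k-j}$. The only substantive points are the two bookkeeping facts — the $z$-degree constraint forcing $\ell\ge k-j$, and the Lefschetz lemma identifying the column (resp.\ row) of $\Gamma_{j,k}$ reached with no reduction — and I expect the main difficulty to be formulating these cleanly enough that the coefficient of $c_{k-j}$ along the diagonal is visibly $1$, and visibly $0$ off the diagonal in case (i).
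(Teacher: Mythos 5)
Your proposal is correct and follows essentially the same route as the paper: identify the unique term $c_{k-j}x_n^{k-j}z^{d-(k-j)}$ of $g$, invoke Lemma~\ref{lefschetzprop} to conclude $x_n^{k-j}m\in B_k$ so that this term lands without reduction on the distinguished diagonal entry, and observe that all other contributions come from higher $z$-degree terms and so involve only $v_{\ell,p}$ with $\ell\ge k-j$. Your write-up is in fact a bit more explicit than the paper's (in particular the degree bookkeeping forcing $\ell\ge k-j$, and the off-diagonal vanishing of $c_{k-j}$ in case (i)), but the underlying argument is the same.
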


\begin{proof}
(i) Let $x^{\alpha} \in B_j$, and consider the term $c_{k-j}x_n^{k-j}z^{d+j-k}$ of $g$. By Lemma \ref{lefschetzprop}, the monomial $x^{\alpha}x_n^{k-j}$ is in $B_k$, that is, it is reduced modulo $G$. So in the reduced form of the product $x^{\alpha}z^{i-j} \cdot g$, $c_{k-j}$ will certainly appear in the coefficient of the monomial $x^{\alpha}x_n^{k-j}z^{d+i-k}$. Larger monomials that appear in the product might not be reduced, and the reduction would result in a coefficient of the form $c_{k-j} + L$, as claimed. Since the coefficient $c_{k-j}$ comes from a unique term in $g$, it cannot appear in any other entries.

(ii) Again, we let $x^{\alpha}$ be a monomial in $B_j$. Suppose that $x^{\alpha}$ is among the $|B_k|$ smallest monomials in $B_j$. By Lemma \ref{lefschetzprop}, the monomial $x^{\alpha}x_n^{k-j}$ is in $B_k$, so $c_{k-j}$ appears in the coefficient of the monomial $x^{\alpha}x_n^{k-j}z^{d+i-k}$ in the reduced form of $x^{\alpha}z^{i-j}\cdot g$. In the reduction process, possibly larger terms will be reduced resulting in a coefficient of the form $c_{k-j} + L$. Note that $c_{k-j}$ might appear in the top rows of $\Gamma_{k,j}$, that is, $c_{k-j}$ appears only once in each of $|B_k|$ the bottom rows, but we cannot guarantee it does not appear in other entries in the top rows.
\end{proof}

\subsection{Main results}

In the next lemmas we handle the case with $\delta-d \leq i \leq \delta$. Let $\Theta_i$ denote the square submatrix of $M_i$ formed by columns corresponding to the $|E_i|$ largest monomials in $E_{i+d}$. We want to show that $\Theta_i$ is nonsingular, for all $0 \leq i \leq \delta$. The determinant of $\Theta_i$ is a polynomial in the coefficients of $g$. We need to see that this polynomial is nonzero. Our goal is to show there is a term that can be obtained as a product of entries in a unique way, and hence cannot be cancelled. In this case the $|E_i|$ largest monomials in $E_{d+i}$ are the monomials in $z^{i+d-\delta}B_{\delta}, z^{i+d-\delta+1}B_{\delta-1}, \dots, z^{2i+d-\delta}B_{\delta-i}$, and $\Theta_i$ is formed by the following blocks
\[ \Theta_{i} =
  \qbordermatrix[()]{ne}{%
     \mathbf{B}_{\delta} & \mathbf{B}_{\delta-1} & \cdots & \mathbf{B}_{\delta-i+1} & \mathbf{B}_{\delta-i} & \cr
     \Gamma_{i,\delta} & \Gamma_{i,\delta-1}& \cdots & \Gamma_{i,\delta-i+1} & \Gamma_{i,\delta-i} & \mathbf{B}_{i} \cr
      \Gamma_{i-1,\delta} & \Gamma_{i-1,\delta-1} & \cdots & \Gamma_{i-1,\delta-i+1} & \Gamma_{i-1,\delta-i} & \mathbf{B}_{i-1} \cr
      \vdots & \vdots & \ddots & \vdots & \vdots & \vdots \cr
      \Gamma_{1,\delta} & \Gamma_{1,\delta-1} & \cdots & \Gamma_{1,\delta-i+1} & \Gamma_{1,\delta-i} & \mathbf{B}_1 \cr
      \Gamma_{0,\delta} & \Gamma_{0,\delta-1} & \cdots & \Gamma_{0,\delta-i+1} & \Gamma_{0,\delta-i} & \mathbf{B}_{0} \cr
  }.
\]

\begin{lemma}\label{lemmaselec}
Suppose $\delta - d \leq i \leq \delta$, and $i \geq \delta/2$. Then the term
\begin{equation}\label{term1} c_{\delta-i}^{(i+1)a_0} c_{\delta-i-1}^{i(a_1-a_0)} c_{\delta-i-2}^{(i-1)(a_2-a_1)} \cdots c_{0}^{(2i-\delta+1)(a_{\delta-i}-a_{\delta-i-1})} \end{equation}
can be obtained from the product of entries of $\Theta_i$, with exactly one entry from each column and row.
\end{lemma}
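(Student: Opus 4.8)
# Proof Proposal for Lemma~\ref{lemmaselec}

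The plan is to exhibit, inside $\Theta_i$, a selection of one entry per row and per column whose product is exactly the monomial~\eqref{term1}, and then to argue that this selection is forced, so that the corresponding term in the Leibniz expansion of $\det\Theta_i$ cannot cancel. The starting point is the block decomposition of $\Theta_i$ displayed above: its rows are indexed by $z^{i-j}B_j$ for $0\le j\le i$ and its columns by $z^{i+d-k}B_k$ for $\delta-i\le k\le\delta$. For a fixed pair $(j,k)$ with $\delta-i \le k \le \delta$ and $0 \le j \le i$, I would look at the block $\Gamma_{j,k}$ and invoke Lemma~\ref{blocklemma}: depending on whether $|B_j|\le|B_k|$ or $|B_j|\ge|B_k|$, one of the two maximal square submatrices of $\Gamma_{j,k}$ has the variable $c_{k-j}$ (plus lower-order stuff $L$ not involving $c_{k-j}$) along its diagonal, and $c_{k-j}$ appears in no other entry of that block. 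Since $i\ge\delta/2$, for the columns in the range $\delta-i\le k\le\delta$ one has (via the Stanley/Lefschetz structure of Lemmas~\ref{propStanley} and~\ref{lefschetzprop}, which control the relative sizes $a_\nu=|B_\nu|$) a clean nesting pattern of the $B_j$'s that tells us, block by block, which shape applies.

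The combinatorial heart is to choose a system of diagonal entries across the blocks that (a) uses each row and column exactly once and (b) produces the prescribed exponents. I would proceed greedily down the columns from $k=\delta$ to $k=\delta-i$: in the column strip $\mathbf{B}_k$, the rows still available are those $z^{i-j}B_j$ not yet consumed; using the inclusions $B_{j-1}\hookrightarrow B_j$ (as sets of the smallest monomials, by Lemma~\ref{lefschetzprop}) and the symmetry $a_\nu$, I would match the $a_k$ columns of $\mathbf{B}_k$ against diagonals in the blocks $\Gamma_{j,k}$ so that the block $\Gamma_{j,\delta-i}$ contributes $c_0$ to the power equal to the number of times it is used. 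Counting these multiplicities is exactly the bookkeeping that yields the exponents $(i+1)a_0$ on $c_{\delta-i}$, $i(a_1-a_0)$ on $c_{\delta-i-1}$, and in general $(2i-\delta+1+t)(a_{\delta-i-t}-a_{\delta-i-t-1})$ — the factor $(i+1-t')$-type coefficients come from how many of the $i+1$ row-strips $z^{i-j}B_j$ survive to be matched in each successive column strip, and the differences $a_{k}-a_{k-1}$ come from the "new" monomials appearing at each graded level. I would organize this as an explicit assignment and then verify the two counts (rows used, columns used) and the exponent tally match.

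Finally, uniqueness: I would argue that the monomial~\eqref{term1} can arise from $\prod$ of entries in only one way. The key is that $c_{k-j}$ originates from the single term $c_{k-j}x_n^{k-j}z^{d+j-k}$ of $g$, so $c_{k-j}$ lives only in the blocks $\Gamma_{j',k'}$ with $k'-j'=k-j$, and within such a block only on the distinguished diagonal (by Lemma~\ref{blocklemma}). So to collect $c_0$ to the top power $(2i-\delta+1)(a_{\delta-i}-a_{\delta-i-1})$ one is forced to use precisely those diagonal entries; this pins down a set of rows and columns, and then $c_1$ to its prescribed power is forced among the remaining rows/columns, and so on inductively. Because each forced step removes a full set of rows and columns, no alternative selection can give the same monomial, so its coefficient in $\det\Theta_i$ is $\pm 1\neq 0$.

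\medskip
\noindent\textbf{Main obstacle.} I expect the real work to be the bookkeeping in the middle paragraph: setting up the greedy column-by-column matching so that it is simultaneously a valid permutation (a perfect matching between rows and columns of $\Theta_i$) \emph{and} reproduces the exact exponents in~\eqref{term1}. This requires carefully tracking, at each graded degree, how the chain $B_0\subseteq B_1\subseteq\cdots$ (in the "smallest monomials" sense of Lemma~\ref{lefschetzprop}) interacts with the hypothesis $i\ge\delta/2$ and $\delta-d\le i$, and confirming that the $|B_j|\le|B_k|$ versus $|B_j|\ge|B_k|$ dichotomy always falls the way one needs for the distinguished diagonal of the right size to be available. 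The uniqueness argument, by contrast, should be essentially formal once Lemma~\ref{blocklemma} is in hand.
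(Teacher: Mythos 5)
Your high-level plan --- exhibit a transversal in $\Theta_i$ from the distinguished diagonals of the blocks $\Gamma_{j,k}$ supplied by Lemma~\ref{blocklemma}, count multiplicities via the Hilbert function, and observe that the selection is forced --- is the paper's plan, and you have even reconstructed the correct exponent $(2i-\delta+1+t)(a_{\delta-i-t}-a_{\delta-i-t-1})$ of $c_t$. But you have not produced the actual selection, and the step you label the ``main obstacle'' is precisely the content of the lemma. Your ``greedy column-by-column'' organization is also the awkward one: a single column strip $\mathbf{B}_k$ of $\Theta_i$ meets blocks $\Gamma_{j,k}$ with many different values of $k-j$, hence many different coefficients $c_{k-j}$, so a column-wise sweep has to coordinate several coefficients simultaneously.

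The paper sweeps the super-diagonals of the block matrix instead. At step $\ell$ (for $0\le\ell\le\delta-i$) it considers the $i-\ell+1$ blocks $\Gamma_{j,k}$ with $k-j=\delta-i-\ell$, namely $\Gamma_{i,\delta-\ell},\Gamma_{i-1,\delta-\ell-1},\dots,\Gamma_{\ell,\delta-i}$; all of these carry $c_{\delta-i-\ell}$ on the diagonal identified in Lemma~\ref{blocklemma}. From each such block it selects the $a_\ell-a_{\ell-1}$ diagonal entries lying in rows $a_{\ell-1}+1,\dots,a_\ell$ counting from the bottom and columns $a_{\ell-1}+1,\dots,a_\ell$ counting from the right (with $a_{-1}=0$). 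This never reuses a row or column of $\Theta_i$: within the row strip $B_j$ one uses the bottom $a_0$ rows at step~$0$ in $\Gamma_{j,\delta+j-i}$, the next $a_1-a_0$ rows at step~$1$ in $\Gamma_{j,\delta+j-i-1}$, and so on; the same bookkeeping holds for columns. Feasibility --- each block on the $\ell$-th super-diagonal has at least $a_\ell$ rows and columns --- follows from $a_\ell\le a_j$ for $\ell\le j\le i$ and $a_\ell=a_{\delta-\ell}\le a_k$ for $\delta-i\le k\le\delta-\ell$, both consequences of the unimodality and symmetry of the Hilbert function of $A$; this is where the hypothesis $i\ge\delta/2$ enters. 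Summing over steps gives the exponent $(i-\ell+1)(a_\ell-a_{\ell-1})$ on $c_{\delta-i-\ell}$, which is exactly~\eqref{term1}. Uniqueness of the selection, which you also sketched, is the separate Lemma~\ref{lemmaunisel} and is not needed for the present statement.
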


\begin{proof}

We will show how to select entries from $\Theta_i$ in steps. In each step, we pick entries from a certain set of blocks $\Gamma_{j,k}$. We start at step 0, selecting entries from the blocks on the diagonal of $\Theta_i$, and then blocks above the diagonal in the next step, and so on.

Let $0 \leq \ell \leq \delta-i$. At step $\ell$ we select $a_{\ell} - a_{\ell-1}$ entries from blocks
\begin{equation} \label{blocksdiag} \Gamma_{i,\delta-\ell}, \Gamma_{i-1, \delta-\ell+1}, \dots, \Gamma_{\ell, \delta-i}. \end{equation}
The entries selected are the ones in the bottom $a_{\ell}$ rows, skipping the bottom $a_{\ell-1}$, and $a_{\ell}$ right-most columns, skipping the last $a_{\ell-1}$ columns. These entries have the form $c_{\delta-\ell-i} + L$.
\[ \vphantom{
    \begin{matrix}
    \overbrace{XYZ}^{\mbox{$R$}}\\ \\ \\ \\ \\ \\
    \underbrace{pqr}_{\mbox{$S$}}
    \end{matrix}}%
\begin{pmatrix}
\ddots &  &  &  &  &  & \\
 & c_{\delta-\ell-i +L} & * & * & * & * & * \\
 & * & \underline{c_{\delta-\ell-i +L}} & * & * & * & * \\
 & * & * & \underline{c_{\delta-\ell-i +L}} & * & * & * \\
 & * & * & * & \underline{c_{\delta-\ell-i +L}} & * & * \\
 & * & * & * & * & c_{\delta-\ell-i +L} & * \\
 & * & \coolunder{a_{\ell}-a_{\ell-1}}{ \qquad * \qquad & \qquad * \qquad & \qquad * \qquad} & \coolunder{a_{\ell -1}}{ \qquad * \qquad & c_{\delta-\ell-i +L}}
\end{pmatrix}%
\begin{matrix*}[l]
\vphantom{\ddots}\\ \vphantom{a}\\
\coolrightbrace{x \\ x \\ y}{a_{\ell}-a_{\ell-1}}\\
\coolrightbrace{y \\ y }{a_{\ell -1}}
\end{matrix*}\]

Note that for any of the blocks $\Gamma_{j,k}$ in (\ref{blocksdiag}), since $\ell \leq \delta-i \leq i$, and $\ell \leq j \leq i$, it follows that $a_{\ell} \leq a_{j}$. Also, since $\delta -i \leq k \leq \delta -\ell$, we have $a_{\ell} = a_{\delta -\ell} \leq a_k$. Thus, we indeed have enough entries to pick in all blocks.

Furthermore, for a group of rows corresponding to $B_j$, we picked entries from the bottom $a_0$ rows of the block $\Gamma_{j,\delta+j-i}$, then entries from the next $a_1-a_0$ rows from the block $\Gamma_{j,\delta+j-i-1}$, and so on, so that we never select entries from the same rows. The same reasoning applies to columns. Fixing a group of columns corresponding to $B_k$, we pick $a_0$ entries from right column of $\Gamma_{k+i-\delta, k}$, then $a_1-a_0$ entries from the next columns, and so on, never repeating columns. So we select a single entry from each row and each column.

At each step $\ell$, for $0 \leq \ell \leq \delta-i$, we picked $a_{\ell} - a_{\ell-1}$ entries of the form $c_{\delta-\ell-i}$ from $i-\ell+1$ blocks. Taking the product of all entries selected, we have a polynomial in the coefficients of $g$ of the form
$$c_{\delta-i}^{(i+1)a_0} c_{\delta-i-1}^{i(a_1-a_0)} c_{\delta-i-2}^{(i-1)(a_2-a_1)} \cdots c_{0}^{(2i-\delta+1)(a_{\delta-i}-a_{\delta-i-1})} + \mbox{ other terms}. $$
\end{proof}

\begin{lemma} \label{lemmaunisel}
There is only one way of selecting entries from $\Theta _i$ and obtaining the term in Equation (\ref{term1}).
\end{lemma}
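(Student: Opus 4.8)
The plan is to show that the term in Equation~(\ref{term1}) arises from a \emph{unique} choice of one entry per row and column of $\Theta_i$, by reconstructing that choice step by step, using the block structure from Lemma~\ref{blocklemma} together with the degree bookkeeping. The key observation is that each coefficient $c_m$ of $g$ comes from a single term of $g$ (namely $\mathbf{v}_m \cdot \mathbf{B}_m z^{d-m}$, and specifically $c_m x_n^m z^{d-m}$), so by part~(i) and part~(ii) of Lemma~\ref{blocklemma}, $c_{\delta - \ell - i}$ can appear as an entry of $\Theta_i$ \emph{only} in the blocks $\Gamma_{j,k}$ with $k - j = \delta - \ell - i$, i.e.\ exactly the blocks listed in~(\ref{blocksdiag}) for that step $\ell$. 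This already forces the total number of entries equal to $c_{\delta-\ell-i} + L$ that must be selected from those blocks: the exponent of $c_{\delta-\ell-i}$ in~(\ref{term1}) is $(i+1-\ell)(a_\ell - a_{\ell-1})$, and there are $i+1-\ell$ blocks in~(\ref{blocksdiag}), so we must take exactly $a_\ell - a_{\ell-1}$ such entries from each block on that diagonal.

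First I would argue by induction on $\ell$, starting with $\ell = 0$. In a block $\Gamma_{j, \delta+j-i}$ on the main diagonal, Lemma~\ref{blocklemma} tells us the entries of the form $c_{\delta-i}+L$ lie precisely on the diagonal of a square submatrix formed by the last $a_0$ columns (or the last $a_0$ rows, according as $|B_j| \le |B_{\delta+j-i}|$ or not); since $a_0 = |B_0|$ is the smallest of all the $a_\nu$, and since no column of $\Theta_i$ lies to the right of these and no row below them inside each block, the $a_0$ entries from each diagonal block must be exactly those diagonal entries, and they occupy the last $a_0$ columns of each column-group $\mathbf{B}_k$ and the last $a_0$ rows of each row-group $\mathbf{B}_j$. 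Now pass to $\ell = 1$: the rows and columns used up at step $0$ are unavailable, and within each block $\Gamma_{j, \delta+j-i-1}$ of~(\ref{blocksdiag}) the $c_{\delta-i-1}+L$ entries again sit on a sub-diagonal; the already-committed last $a_0$ rows/columns force the $a_1 - a_0$ new entries into the next $a_1-a_0$ rows and columns, again as diagonal entries, with no freedom. Continuing, at step $\ell$ the blocks in~(\ref{blocksdiag}) are the only source of $c_{\delta-\ell-i}$, the count $a_\ell - a_{\ell-1}$ per block is forced, and the rows/columns consumed at steps $0,\dots,\ell-1$ leave exactly the next $a_\ell - a_{\ell-1}$ rows and columns free, into which the sub-diagonal entries must go. After step $\delta-i$ every row and column of $\Theta_i$ is accounted for (the row-groups $\mathbf{B}_j$ and column-groups $\mathbf{B}_k$ are exhausted by the telescoping $a_0 + (a_1-a_0) + \cdots$), so no other entries may be chosen, and the selection is unique.

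I would then note two points that need care. First, one must check that the "$+L$" perturbations cannot contribute the monomial~(\ref{term1}) through some other block: this is handled by the last sentence of Lemma~\ref{blocklemma}(i) ($c_{k-j}$ appears in no other entry of $\Gamma_{j,k}$) together with the fact that $L$ involves only coefficients $c_{k-j}', \dots$ from $\mathbf{v}_{k-j}, \dots, \mathbf{v}_\delta$ \emph{other} than $c_{k-j}$, so each factor $c_{\delta-\ell-i}$ in the target monomial can only be supplied by a genuine $\Gamma_{j,k}$-diagonal entry with $k-j = \delta-\ell-i$, never by an $L$. Second, one must verify the bound $\delta - d \le i$ is actually used: it guarantees that every block $\Gamma_{j,k}$ appearing in~(\ref{blocksdiag}) satisfies $0 \le k - j \le d$, so that $c_{k-j}$ is a genuine coefficient of $g$ (recall $g$ has degree $d$, so $\mathbf{v}_m$ exists only for $0 \le m \le d$); without this the indexing in~(\ref{term1}) would be meaningless. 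The main obstacle is the combinatorial bookkeeping: making fully rigorous the claim that the rows and columns consumed in steps $0,\dots,\ell-1$ leave \emph{exactly} the right sub-block of each $\Gamma_{j,k}$ available at step $\ell$ — in particular that the "staircase" of selected rows within a fixed row-group $\mathbf{B}_j$ (bottom $a_0$ from $\Gamma_{j,\delta+j-i}$, next $a_1-a_0$ from $\Gamma_{j,\delta+j-i-1}$, etc.) is forced and never overlaps, which rests on the monotonicity $a_0 \le a_1 \le \cdots \le a_{\delta-i}$ (valid since $\delta - i \le \delta/2 \le \sigma$ under the hypothesis $i \ge \delta/2$) and on the precise column positions of the sub-diagonals given by Lemma~\ref{blocklemma}.
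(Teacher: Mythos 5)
Your ``key observation'' --- that $c_{\delta-\ell-i}$ can appear in $\Theta_i$ \emph{only} in the blocks $\Gamma_{j,k}$ with $k-j = \delta-\ell-i$ --- is not what Lemma~\ref{blocklemma} says, and it is false. The lemma states that the ``$+L$'' part of a diagonal entry of $\Gamma_{j,k}$ is linear in the coefficients of $\mathbf{v}_{k-j}, \mathbf{v}_{k-j+1}, \dots$ \emph{other than $c_{k-j}$}; so $L$ may very well contain $c_m$ for $m > k-j$ (these are the last entries of $\mathbf{v}_m$, $m > k-j$), and so can the non-diagonal ``$*$'' entries. Consequently a fixed $c_m$ lives in every block $\Gamma_{j,k}$ with $k-j \le m$, i.e.\ on every super-diagonal $\ell' \ge \delta-i-m$, not just on $\ell = \delta-i-m$. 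Your second paragraph tries to close this gap by invoking ``$c_{k-j}$ appears in no other entry of $\Gamma_{j,k}$'', but that clause is a statement \emph{within} a single block; it says nothing about $c_{\delta-\ell-i}$ appearing in an $L$ of some $\Gamma_{j',k'}$ with $k'-j' < \delta-\ell-i$, which is exactly what you need to exclude. Once this is conceded, the counting step (``exponent $(i+1-\ell)(a_\ell-a_{\ell-1})$ over $i+1-\ell$ blocks forces $a_\ell-a_{\ell-1}$ per block'') has no basis.

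This is precisely the difficulty the paper's proof is built to handle, and it does so with a different, directional argument: it goes \emph{row by row from the bottom}, not super-diagonal by super-diagonal. In the bottom row (row-group $B_0$), the blocks are $\Gamma_{0,k}$ with $k \ge \delta-i$, so only $c_m$ with $m \ge k \ge \delta-i$ can occur there; since (\ref{term1}) contains only $c_m$ with $m \le \delta-i$, that row is forced to contribute $c_{\delta-i}$ from the last column. Climbing upward, the columns consumed below prevent the ``wrong'' coefficients from being chosen in higher rows, and an outer induction on $\ell$ repeats this for the next $a_\ell - a_{\ell-1}$ rows of each row-group. It is this monotone bottom-to-top sweep --- not a per-super-diagonal count --- that absorbs the fact that $c_m$ leaks into the $L$'s of blocks above the $\ell$-th super-diagonal. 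Two smaller slips: the sub-diagonal of $\Gamma_{j,\delta+j-i}$ from Lemma~\ref{blocklemma} has length $\min(|B_j|, |B_{\delta+j-i}|)$, not $a_0$; and the needed monotonicity $a_0 \le a_1 \le \cdots \le a_{\delta-i}$ follows from $\delta - i \le \lfloor\delta/2\rfloor \le \sigma+\mu$, not from $\lfloor\delta/2\rfloor \le \sigma$ (in general $\sigma \le \lfloor\delta/2\rfloor$, the opposite of what you wrote).
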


\begin{proof}
We use induction to show that for $\ell = \delta-i, \delta-i-1, \dots, 0$, there is only one way of obtaining the power of $c_{\ell}$ in Equation (\ref{term1}) from the product of entries of $\Theta_i$.

We first consider $c_{\delta-i}$. We now use induction to show that for all $0 \leq j \leq i$, the only entry available to select in the last row of the set of rows corresponding to $B_j$ is the one on the last column of the block $\Gamma_{j,\delta+j-i}$, of the form $c_{\delta-i} + L$. Note that the only coefficient in Equation (\ref{term1}) that appears in the bottom row of $\Theta_i$, corresponding to $B_0$, is $c_{\delta-i}$, which is in the last column of the block $\Gamma_{0,\delta-i}$. So we pick this entry. Suppose now that the only way of selecting an entry containing a coefficient in Equation (\ref{term1}) from the last row of the block corresponding to $B_j$ is picking the one containing $c_{\delta-i}$, from the last column of the block $\Gamma_{j, \delta+j-i}$. This means that the other entries in this row involving coefficients in Equation (\ref{term1}) cannot be selected at this point, and so entries in the last columns of the blocks $\Gamma_{j, \delta+j-i-1}, \Gamma_{j, \delta+j-i-2}, \cdots$ have been selected in previous steps, from blocks below. Passing to the set of rows corresponding to $B_{j+1}$, it follows that entries have been selected on the last columns of blocks $\Gamma_{j+1,\delta+j-i}, \Gamma_{j+1, \delta+j-i-1, \dots}$, and hence we are left with no choice other than selecting the entry from the last column of the block $\Gamma_{j+1, \delta+j-i+1}$, which has the form $c_{\delta-i}$. This proves our claim.

Let $1\leq \ell \leq \delta-i$, and suppose we have selected entries involving $c_{\delta-i}, c_{\delta-i-1}, \dots, c_{\delta-i-\ell+1}$ as in Lemma \ref{lemmaselec}, and that this selection was the only possible choice. This means that we have already picked entries from the bottom $a_{\ell-1}$ rows of all blocks $B_0, \dots, B_i$. So let us consider the next $a_{\ell} - a_{\ell -1}$ rows. Starting with the block $B_{\ell}$, note that the coefficients from Equation (\ref{term1}) that appear in this block are $c_{\delta-i}, c_{\delta-i-1}, \dots, c_{\delta-i-\ell}$. But with the selections we have already made, the exponents of $c_{\delta-i}, c_{\delta-i-1}, \dots, c_{\delta-i-\ell+1}$ in Equation (\ref{term1}) were reached, so that at this point we cannot select the entries involving these coefficients. Hence, the only choice left is selecting the entries of the form $c_{\delta-i-\ell} + L$ from $\Gamma_{\ell, \delta-i}$.

Let $\ell+1 \leq j \leq i$. Suppose we already picked entries of the form $c_{\delta-i-\ell} + L$ as in Lemma \ref{lemmaselec} from blocks $B_{\ell}, B_{\ell+1}, \dots, B_{j-1}$. Consider block $B_j$. Still assuming that entries have been selected from the bottom $a_{\ell-1}$ rows, we pass to the next $a_{\ell} - a_{\ell-1}$. The selections made in blocks bellow prevent us from picking the entries involving $c_{\delta-i-\ell-1}, \dots, c_0$. Also, we cannot select entries where coefficients $c_{\delta-i}, \dots, c_{\delta-i-\ell+1}$ appear. Thus, we are left with entries containing $c_{\delta-i-\ell}$.
\end{proof}

\begin{lemma}\label{lemmaselec2}
Suppose $\delta - d \leq i \leq \delta$, and $i \leq \delta/2$. Then the term
\begin{equation}\label{term2} c_{\delta-i}^{(i+1)a_{0}} c_{\delta-i-1}^{i(a_1-a_0)} c_{\delta-i-2}^{(i-1)(a_2-a_1)} \cdots c_{i}^{(a_{i}-a_{i-1})} \end{equation}
can be obtained from the product of entries of $\Theta_i$, with exactly one entry from each column and row.
\end{lemma}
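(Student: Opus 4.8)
The plan is to run the same kind of argument as in the proof of Lemma~\ref{lemmaselec}, selecting entries of $\Theta_i$ along a family of ``block diagonals'' in a sequence of steps $\ell=0,1,\dots,i$; the only structural change, forced by the hypothesis $i\le\delta/2$, is that the process now terminates at step $i$ rather than at step $\delta-i$, and that the blocks $\Gamma_{j,k}$ involved along the way are no longer all ``wide'' — some have $|B_j|>|B_k|$ and some $|B_j|<|B_k|$, so that both parts of Lemma~\ref{blocklemma} are used. The uniformizing observation that makes this manageable is that, in either case of Lemma~\ref{blocklemma}, the distinguished diagonal of $\Gamma_{j,k}$ (the one carrying entries of the form $c_{k-j}+L$) is exactly the set of entries lying in the $s$-th row from the bottom of the block and the $s$-th column from the right of the block, for $s=1,\dots,\min(|B_j|,|B_k|)$.

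Concretely, at step $\ell$ one considers the $i+1-\ell$ blocks $\Gamma_{j,k}$ occurring in $\Theta_i$ with $k-j=\delta-i-\ell$, namely $\Gamma_{i,\delta-\ell},\Gamma_{i-1,\delta-\ell-1},\dots,\Gamma_{\ell,\delta-i}$, and from each of them selects the $a_\ell-a_{\ell-1}$ entries on its distinguished diagonal indexed by $s\in\{a_{\ell-1}+1,\dots,a_\ell\}$ (with the convention $a_{-1}=0$, so step $0$ picks the bottom-right $a_0\times a_0$ corner of each diagonal block). By Lemma~\ref{blocklemma} these entries have the form $c_{k-j}+L=c_{\delta-i-\ell}+L$ with $L$ not involving $c_{\delta-i-\ell}$, so the product of all entries selected over steps $0,\dots,i$ has $\prod_{\ell=0}^{i}c_{\delta-i-\ell}^{(i+1-\ell)(a_\ell-a_{\ell-1})}$ among its terms, which is the term in~(\ref{term2}).

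First I would check that the recipe is legitimate, i.e. that $s$ may run up to $a_\ell$ in every block occurring at step $\ell$: writing such a block as $\Gamma_{i-t,\delta-\ell-t}$ with $0\le t\le i-\ell$, both $i-t$ and $\delta-(\delta-\ell-t)=\ell+t$ lie in $[\ell,i]\subseteq[0,\delta/2]$, and since the Hilbert function $a_\nu=|B_\nu|$ is non-decreasing on $[0,\delta/2]$ and symmetric ($a_\nu=a_{\delta-\nu}$), we get $a_\ell\le a_{i-t}=|B_{i-t}|$ and $a_\ell\le a_{\ell+t}=|B_{\delta-\ell-t}|$, so $a_\ell\le\min(|B_j|,|B_k|)$; this is precisely where $i\le\delta/2$ enters. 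Next I would check that the selected entries form a transversal: a fixed row block $B_j$ occurs at steps $\ell=0,\dots,j$ and at step $\ell$ uses the rows of that block indexed from the bottom by $\{a_{\ell-1}+1,\dots,a_\ell\}$, and these intervals partition $\{1,\dots,a_j\}$; symmetrically a column block $B_k$ (for $\delta-i\le k\le\delta$) occurs at steps $\ell=0,\dots,\delta-k$ and each of its $a_k=a_{\delta-k}$ columns is used once. A convenient sanity check is the identity $\sum_{\ell=0}^{i}(i+1-\ell)(a_\ell-a_{\ell-1})=\sum_{j=0}^{i}a_j=|E_i|$, the size of $\Theta_i$.

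The hard part will be carrying out the uniformization cleanly: one must verify, by a short index computation in each of the two cases of Lemma~\ref{blocklemma}, that the distinguished diagonal really is the ``$s$-th row from the bottom, $s$-th column from the right'' locus claimed above, and then confirm that the row/column accounting of the previous paragraph is consistent with choosing, block by block, the part of that locus with $s\in\{a_{\ell-1}+1,\dots,a_\ell\}$. Everything else is parallel to Lemma~\ref{lemmaselec}; the one genuinely new feature is that the recursion now terminates at the single \emph{square} block $\Gamma_{i,\delta-i}$ (where $|B_i|=|B_{\delta-i}|=a_i$), whose entire main diagonal consists of entries of the form $c_{\delta-2i}+L$ by part (i) of Lemma~\ref{blocklemma}, and from which step $i$ picks the top-left $(a_i-a_{i-1})$-corner of that diagonal.
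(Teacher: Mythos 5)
Your argument follows the paper's intended approach (adapting the step-by-step diagonal selection of Lemma~\ref{lemmaselec}) and is correct; the uniform description of the distinguished diagonal of $\Gamma_{j,k}$ as the $s$-th-row-from-the-bottom, $s$-th-column-from-the-right locus, valid in both cases of Lemma~\ref{blocklemma}, is exactly the observation the paper's one-line proof glosses over, and the check $a_\ell\le\min(|B_j|,|B_k|)$ via unimodality of the Hilbert function is precisely where the hypothesis $i\le\delta/2$ is used. One discrepancy with the paper is worth flagging: the paper's proof states the steps run over $0\le\ell\le\delta-2i$, and the last factor of~(\ref{term2}) is written as $c_{i}^{(a_i-a_{i-1})}$, but as your count $\sum_{\ell=0}^{i}(i+1-\ell)(a_\ell-a_{\ell-1})=|E_i|$ shows, a full transversal of $\Theta_i$ forces $\ell$ to run up to $i$, and then the final $c$-subscript is $\delta-i-i=\delta-2i$, which equals $i$ only when $\delta=3i$. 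The paper's step range and the last subscript in the displayed formula appear to carry a matching typo; your version, running to $\ell=i$ and ending the product at $c_{\delta-2i}^{(a_i-a_{i-1})}$, is the one consistent with both the size of the transversal and the pattern of $a$-indices in~(\ref{term2}).
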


\begin{proof}
The proof is the same as Lemma \ref{lemmaselec}, except that in this case we select entries in steps $\ell$, for $0 \leq \ell \leq \delta-2i$.
\end{proof}

The same proof of Lemma \ref{lemmaunisel} works to show the following.

\begin{lemma}\label{lemmaunisel1}
There is only one way of selecting entries from $\Theta _i$ and obtaining the term in Equation (\ref{term2}).
\end{lemma}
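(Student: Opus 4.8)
The plan is to reuse, essentially word for word, the double induction from the proof of Lemma~\ref{lemmaunisel}, changing only the range of the outer step parameter $\ell$ from $0\le\ell\le\delta-i$ to $0\le\ell\le\delta-2i$ — the range along which Lemma~\ref{lemmaselec2} builds its selection — and correspondingly tracking the coefficients $c_{\delta-i},c_{\delta-i-1},\dots,c_{i}$, which are exactly the variables occurring in (\ref{term2}). So I would prove by induction on $\ell$ that the exponent of $c_{\delta-i-\ell}$ prescribed by (\ref{term2}) can only be produced by the selection exhibited in the proof of Lemma~\ref{lemmaselec2}, namely by taking, among the rows and columns left unused after steps $0,\dots,\ell-1$, the diagonal entries of the blocks $\Gamma_{\ell,\delta-i},\Gamma_{\ell+1,\delta-i+1},\dots,\Gamma_{i,\delta-\ell}$, each of which is of the form $c_{\delta-i-\ell}+L$ by Lemma~\ref{blocklemma}. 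The key structural fact making the whole argument go is again Lemma~\ref{blocklemma}: a coefficient $c_m$ occurs in $\Theta_i$ only in the diagonal positions of the blocks $\Gamma_{j,j+m}$, there exactly once per entry, and not at all in the linear terms $L$ of other entries; so a transversal of $\Theta_i$ contributes $c_m$ with multiplicity equal to the number of selected entries lying in one of these diagonal ``stripes''.

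For the base case $\ell=0$ the argument is identical to the corresponding part of Lemma~\ref{lemmaunisel}: run an inner induction on the index $j$ of the row band $B_j$, from $j=0$ up to $j=i$. The bottom row of the band $B_0$ meets (\ref{term2}) in a single entry, the $c_{\delta-i}$-entry in the last column of $\Gamma_{0,\delta-i}$, so that entry must be chosen; and once the last columns of $\Gamma_{j,\delta+j-i},\Gamma_{j,\delta+j-i-1},\dots$ have been forced in the lower bands, the band $B_{j+1}$ is left with no option other than the $c_{\delta-i}$-entry in the last column of $\Gamma_{j+1,\delta+j-i+1}$. The only genuinely new point is bookkeeping: because now $i\le\delta/2$, the diagonal block $\Gamma_{j,\delta+j-i}$ may be ``wide'' or ``tall'' depending on the sign of $|B_j|-|B_{\delta+j-i}|=a_j-a_{i-j}$, so one invokes case (i) or case (ii) of Lemma~\ref{blocklemma} as appropriate; in either case the $c_{\delta-i}$-entries occupy the diagonal of the maximal square sub-block, which is all that the last-row/last-column elimination uses.

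The inductive step mirrors the second half of the proof of Lemma~\ref{lemmaunisel}. Assuming the selections realizing the exponents of $c_{\delta-i},\dots,c_{\delta-i-\ell+1}$ have been forced, the bottom $a_{\ell-1}$ rows and last $a_{\ell-1}$ columns of every band are already used and the required exponents of those coefficients are met, so none of their ``stripe'' entries can be chosen again; passing to the next $a_{\ell}-a_{\ell-1}$ rows of $B_\ell,B_{\ell+1},\dots$ and repeating the inner induction on $j$, the only coefficient of (\ref{term2}) still admissible is $c_{\delta-i-\ell}$, and the only entries carrying it in the still-free rows and columns are the prescribed diagonal entries of $\Gamma_{\ell,\delta-i},\dots,\Gamma_{i,\delta-\ell}$. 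This forces step $\ell$, and the induction terminates at $\ell=\delta-2i$, which is exactly where (\ref{term2}) ends since $c_{\delta-i-(\delta-2i)}=c_i$.

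The step I expect to be the main obstacle is the one inherited from Lemma~\ref{lemmaselec2}: verifying that the truncated step range $0\le\ell\le\delta-2i$ still assigns every row and every column of $\Theta_i$ exactly once, so that the selection is a genuine transversal and the above ``no option left'' assertions are valid. This is a piece of index arithmetic resting on the symmetry $a_k=a_{\delta-k}$ and on the unimodal/plateau shape of the Hilbert series of $A$ — strictly increasing on $[0,\sigma]$, constant on $[\sigma,\delta-\sigma]$, strictly decreasing on $[\delta-\sigma,\delta]$ — together with the standing hypotheses $\delta-d\le i$ and $i\le\delta/2$, which are what force the relevant indices to land in the flat part of the Hilbert function so that the leftover row- and column-counts agree at each step. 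Once this is checked, no idea beyond the argument of Lemma~\ref{lemmaunisel} is needed.
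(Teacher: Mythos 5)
Your proposal matches the paper's proof exactly in spirit: the paper's entire proof of Lemma~\ref{lemmaunisel1} is the single sentence ``The same proof of Lemma~\ref{lemmaunisel} works,'' and your expansion faithfully runs the same two-level induction (outer on the step $\ell$, inner on the row-band index $j$), with the only change being the outer range $0\le\ell\le\delta-2i$ so that the tracked coefficients are $c_{\delta-i},\dots,c_i$.

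One inaccuracy worth noting, though it does not affect the proposal if you truly copy Lemma~\ref{lemmaunisel} verbatim: your stated ``key structural fact'' --- that $c_m$ occurs in $\Theta_i$ \emph{only} in the diagonal positions of the blocks $\Gamma_{j,j+m}$ and nowhere else --- overstates Lemma~\ref{blocklemma}. The proof of Lemma~\ref{blocklemma}(ii) explicitly warns that in the tall case ($|B_j|\ge|B_k|$) the coefficient $c_{k-j}$ may also show up in the top rows of $\Gamma_{j,k}$, not just on the diagonal of the bottom $|B_k|$ rows. The paper's forcing argument in Lemma~\ref{lemmaunisel} is structured to avoid needing the stronger claim: it proceeds row by row upward from $B_0$, and at each stage only interrogates the lowest not-yet-used row of each band, where by Lemma~\ref{blocklemma} the only occurrence of each $c_m$ is indeed the last-column diagonal entry. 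Keeping the induction in that row-by-row form is therefore not cosmetic; it is what makes the ``only one entry is available'' step legitimate. Your concern about whether the truncated step range $0\le\ell\le\delta-2i$ produces a genuine transversal is reasonable, but it belongs to Lemma~\ref{lemmaselec2} (which asserts the selection exists) rather than to the uniqueness statement being proved here.
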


\begin{corollary}\label{dmaiorquedelta} $\det \Theta_i \neq 0$ for $\delta-d \leq i \leq \delta$.
\end{corollary}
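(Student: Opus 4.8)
The plan is to combine the two pairs of lemmas just established with the genericity of $g$. By the Leibniz expansion, $\det \Theta_i = \sum_{\pi} \operatorname{sgn}(\pi) \prod_k (\Theta_i)_{k,\pi(k)}$, where $\pi$ ranges over the permutations of the index set; since every entry of $\Theta_i$ is a polynomial in the coefficients of $g$ (the entries of the vectors $\mathbf{v}_0, \dots, \mathbf{v}_d$), so is $\det \Theta_i$. It therefore suffices to show this polynomial is not identically zero: once we know that, the fact that the coefficients of $g$ are algebraically independent over $F$ — indeed over the extension of $F$ generated by the coefficients of the elements of $G$ — forces the specialization of $\det \Theta_i$ at the actual coefficients of $g$ to be a nonzero element of $K$.

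To produce a nonzero coefficient of this polynomial I would split into the two cases already separated in the preceding lemmas. If $i \geq \delta/2$, take the monomial in Equation (\ref{term1}); if $i \leq \delta/2$, take the monomial in Equation (\ref{term2}). In either case, Lemma \ref{lemmaselec} (respectively Lemma \ref{lemmaselec2}) shows that this monomial occurs in $\prod_k (\Theta_i)_{k,\pi(k)}$ for some permutation $\pi$ and some choice of a monomial from each entry on that diagonal, while Lemma \ref{lemmaunisel} (respectively Lemma \ref{lemmaunisel1}) shows that this permutation and this choice are unique. Because each entry selected on the distinguished diagonal has the form $c_{k-j} + L$ with the coefficient of $c_{k-j}$ equal to $1$ (Lemma \ref{blocklemma}), the distinguished monomial appears in the Leibniz expansion of $\det \Theta_i$ with coefficient $\pm 1 \neq 0$, and there is nothing it can cancel against. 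Hence $\det \Theta_i$ is a nonzero polynomial in the coefficients of $g$, and the corollary follows.

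The substantive part of the argument — tracking the block structure of $\Theta_i$ and verifying both the existence and the uniqueness of the diagonal selection — has already been carried out in Lemmas \ref{lemmaselec}–\ref{lemmaunisel1}, so the corollary is essentially a formal consequence of them. The only points that need attention are confirming that the leading coefficient of the distinguished monomial is genuinely $\pm 1$, which is immediate from the form $c_{k-j} + L$ in Lemma \ref{blocklemma}, and invoking the algebraic independence of the coefficients of $g$ to pass from ``nonzero as a polynomial'' to ``nonzero as an element of $K$''; neither of these presents any real difficulty, so I do not expect a genuine obstacle in this step.
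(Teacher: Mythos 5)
Your proposal is correct and matches the argument the paper intends: the corollary is stated without a proof body precisely because it follows immediately from Lemmas \ref{lemmaselec}--\ref{lemmaunisel1} together with the genericity of the coefficients of $g$, which is exactly the chain of reasoning you spell out. The only content you add beyond the paper's implicit reasoning is the explicit invocation of the Leibniz expansion and the observation that the distinguished monomial's coefficient is $\pm 1$, both of which are correct clarifications rather than departures.
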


When $d \geq \delta$, Corollary \ref{dmaiorquedelta} holds for all matrices $\Theta_i$, and we have the following.

\begin{proposition} If $d \geq \delta$, then
$$\ini{I,g} = ( \ini{I}, z^{d-\delta}B_{\delta}, z^{d-\delta+2}B_{\delta-1}, \dots, z^{\delta+d-3}B_1, z^{\delta+d-1}B_0).$$
\end{proposition}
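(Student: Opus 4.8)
The plan is to combine the incremental Gröbner basis construction of Section~\ref{g2v} with the nonsingularity results just established. By Theorem~\ref{...} (Gao--Guan--Volny), a Gröbner basis of $(I,g)$ is obtained by reducing the products $\mathbf{E} g$ modulo $G$, performing downward row operations to make the leading monomials of the nonzero right-hand sides distinct, and adjoining those nonzero rows $v_i$ to $G$. Since everything is homogeneous and the reverse-lexicographic order respects $z$, this decomposes degree by degree: in degree $i+d$ we must understand the row space of $M_i$, and by the discussion after Lemma~\ref{lefschetzprop} we only need $0 \le i \le \delta$ (for $i > \delta$ the rows of $M_i$ are $z$-multiples of rows of $M_\delta$, hence redundant). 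So the initial ideal $\ini{I,g}$ is obtained from $\ini{I}$ by adjoining, for each $0 \le i \le \delta$, the monomials of $E_{i+d}$ indexed by a set of columns of $M_i$ spanning its column space — and for the almost-reverse-lexicographic shape we want these to be the $|E_i|$ \emph{largest} such monomials.

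The key point is the hypothesis $d \ge \delta$: then for every $i$ with $0 \le i \le \delta$ we have $\delta - d \le 0 \le i$, so Corollary~\ref{dmaiorquedelta} applies to \emph{all} the matrices $\Theta_i$, where $\Theta_i$ is the square submatrix of $M_i$ on the columns indexed by the $|E_i|$ largest monomials of $E_{i+d}$. Since each $M_i$ has rank $|E_i|$ (the lemma on linear independence of the rows of $M_i$) and $\det\Theta_i \ne 0$, the columns of $\Theta_i$ form a basis of the column space of $M_i$. Hence, after the row reduction, the nonzero rows $v$ produced in degree $i+d$ have leading monomials exactly the $|E_i|$ largest monomials of $E_{i+d}$, i.e. the monomials in $z^{i+d-\delta}B_\delta \cup z^{i+d-\delta+1}B_{\delta-1}\cup\cdots\cup z^{2i+d-\delta}B_{\delta-i}$. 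Therefore
$$\ini{I,g} = \Big(\ini{I}, \ \bigcup_{i=0}^{\delta} \{\text{$|E_i|$ largest monomials of } E_{i+d}\}\Big).$$

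It then remains to simplify this union to the stated minimal generating set. For $i=0$ we get $z^{d-\delta}B_\delta$; for $i=1$ we get $z^{d-\delta+1}B_\delta \cup z^{d-\delta+2}B_{\delta-1}$, and the first piece $z^{d-\delta+1}B_\delta = z\cdot(z^{d-\delta}B_\delta)$ is already in the ideal, leaving the new generators $z^{d-\delta+2}B_{\delta-1}$; inductively at step $i$ the only genuinely new monomials are $z^{2i+d-\delta}B_{\delta-i}$, the lower-degree-in-$z$ pieces being $z$-multiples of monomials adjoined at earlier steps. Using $E_{i+d} = B_{i+d}\cup zB_{i+d-1}\cup\cdots$ together with $B_\nu = 0$ for $\nu > \delta$ and the identification $\pi(\ini{I}) = \ini{J}$, one checks that the monomials of $B_\nu$ themselves ($\nu \le \delta$) are never picked (they have too small a $z$-degree to be among the largest), so no generators of $\ini{I}$ become redundant either. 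Collecting the new generators $z^{d-\delta}B_\delta,\ z^{d-\delta+2}B_{\delta-1},\ \dots,\ z^{d+\delta-3}B_1,\ z^{d+\delta-1}B_0$ (exponents forming the arithmetic progression $d-\delta+2(\delta-k)$ for the block $B_k$) yields the claimed formula.

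The main obstacle I expect is the bookkeeping in this last paragraph: making precise that after all the row reductions the leading monomials are \emph{exactly} the largest $|E_i|$ of $E_{i+d}$ (this is immediate from $\det\Theta_i\neq 0$ plus $\rank M_i = |E_i|$, but one must phrase the Gao--Guan--Volny reduction carefully to see the leading monomials of the final nonzero rows coincide with any chosen column basis), and verifying the redundancy pattern — that each $z^{i+d-\delta+t}B_{\delta-i+t}$ with $t<i$ is divisible by a previously adjoined $z^{\cdot}B_{\cdot}$, and that none of these new monomials is divisible by a generator of $\ini{I}$ (equivalently, of $\ini{J}$, by a $z$-free monomial), so that the displayed list is in fact a minimal generating set. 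This uses only Lemma~\ref{propStanley}, Lemma~\ref{lefschetzprop} and the structure $E = B \cup zB \cup z^2 B \cup \cdots$, and involves no new ideas beyond careful indexing.
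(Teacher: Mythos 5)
Your proof is correct and follows essentially the same route as the paper's: apply the Gao--Guan--Volny reduction degree by degree, use $\rank M_i = |E_i|$ together with $\det\Theta_i\neq 0$ to conclude the pivots (leading monomials) in degree $i+d$ are exactly the $|E_i|$ largest monomials of $E_{i+d}$, then strip out the $z$-multiples from earlier degrees to isolate the new generators $z^{2i+d-\delta}B_{\delta-i}$. Two small notes: your parenthetical "they have too small a $z$-degree to be among the largest" fails in the boundary case $d=\delta$ (where $z^{0}B_\delta$ is $z$-free and \emph{is} picked at $i=0$); the correct reason that none of the new monomials lie in $\ini{I}$ is simply that each has the form $z^{j}b$ with $b$ a standard monomial and the generators of $\ini{I}$ are $z$-free, hence cannot divide $b$. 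Also, your arithmetic-progression formula $d-\delta+2(\delta-k)$ for the exponent on $B_k$ is the right one, yielding $z^{d+\delta-2}B_1$ and $z^{d+\delta}B_0$; the exponents $z^{\delta+d-3}B_1,\ z^{\delta+d-1}B_0$ in the proposition as printed appear to be a typo (inconsistent with the progression and with the following corollary, which asserts $z^{d+\delta-1}B_0$ is a \emph{standard} monomial).
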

\begin{proof}
Fix $1 \leq i \leq \delta$. Since $\Theta_i$ is the submatrix of $M_i$ formed by columns corresponding to the monomials in $B_{\delta}z^{d+i-\delta}, \dots, B_{\delta-i} z^{d+2i-\delta}$, and by Corollary \ref{dmaiorquedelta} is nonsingular, we can perform row operations on $M_i$ and change Equation (\ref{matrixmi}) into
\begin{equation*}
\begin{pmatrix}
  \mathbf{u}_i \\
  \mathbf{u}_{i-1} \\
  \vdots \\
  \mathbf{u}_0 \\
\end{pmatrix}
\cdot g \equiv \begin{pmatrix}
  \mathbf{w}_i \\
  \mathbf{w}_{i-1} \\
  \vdots \\
  \mathbf{w}_0 \\
\end{pmatrix}
\mo G,
\end{equation*}
where the entries of each $\mathbf{w}_j$ are polynomials with distinct initial terms, so that each monomial in $B_{\delta}z^{d+i-\delta}, \dots, B_{\delta-i} z^{d+2i-\delta}$ occurs as leading monomial of some polynomial in $\mathbf{w}_0, \dots, \mathbf{w}_i$. But the monomials in $B_{\delta}z^{d+i-\delta}$, $\dots$, $B_{\delta-i+1} z^{d+2(i-1)-\delta}$ are redundant as they are multiples of monomials that occur as leading terms when we perform row operations on $\mathbf{E}_{i-1} g \equiv M_{i-1}\mathbf{E}_{i+d-1} \mo G$. Thus, only the monomials in $B_{\delta-i} z^{d+2i-\delta}$ are minimal generators of $\ini{I,g}$.
\end{proof}

\begin{corollary} Suppose $d \geq \delta$, and let $\widetilde{B} \subset K[x_1, \dots, x_n, z]$ denote the set of standard monomials of $(I,g)$. Then
\begin{eqnarray*}
\widetilde{B}_0 & = & B_0 \\
\widetilde{B}_1 & = & B_1 \cup zB_0 \\
\widetilde{B}_2 & = & B_2 \cup zB_1 \cup z^2B_0 \\
& \vdots & \\
\widetilde{B}_{\delta} & = & B_{\delta} \cup zB_{\delta-1} \cup \cdots \cup z^{\delta}B_0 \\
\widetilde{B}_{\delta+1} & = & z\widetilde{B}_{\delta} \\
& \vdots & \\
\widetilde{B}_{d-1} & = & z^{d-\delta-1}\widetilde{B}_{\delta} \\
\widetilde{B}_d & = & z^{d-\delta+1}\widetilde{B}_{\delta-1} \\
\widetilde{B}_{d+1} & = & z^{d-\delta+3} \widetilde{B}_{\delta-2} \\
& \vdots & \\
\widetilde{B}_{d+\delta-1} & = & z^{d+\delta-1}B_0.
\end{eqnarray*}
\end{corollary}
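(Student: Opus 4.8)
The statement is a purely combinatorial consequence of the preceding Proposition, so the plan is to read the set $\widetilde{B}$ of standard monomials of $(I,g)$ directly off the displayed generators of $\ini{I,g}$. I would begin by recording two facts already in place. First, since $I\subseteq(I,g)$ we have $\ini{I}\subseteq\ini{I,g}$, hence $\widetilde{B}\subseteq E$, so every element of $\widetilde{B}$ has the form $z^{\ell}m$ with $0\le k\le\delta$, $m\in B_k$ and $\ell\ge 0$; moreover the graded pieces of $E$ are $E_i=\bigcup_{k=0}^{\min\{i,\delta\}}z^{i-k}B_k$, as recalled above. Second, $B$ is a down-set for divisibility: if $m\in B_k$ and $m'\mid m$ then $m'\in B_{\deg m'}$, since $\ini{J}$ is a monomial ideal and hence its monomial complement is closed under taking divisors. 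It therefore suffices to decide, for each $m\in B_k$ and each $\ell\ge 0$, whether $z^{\ell}m\in\ini{I,g}$.

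For that I would use the Proposition together with these two facts. By the Proposition, $\ini{I,g}$ is generated by the minimal generators of $\ini{I}$ --- monomials in $x_1,\dots,x_n$ only, none of which divides any element of $B$ --- together with the blocks $z^{e_{k'}}B_{k'}$ for $0\le k'\le\delta$, where $e_{\delta}=d-\delta$ and $e_{k'-1}=e_{k'}+2$, so that $e_{k'}=d+\delta-2k'$ is strictly decreasing in $k'$. A monomial $z^{\ell}m$ with $m\in B_k$ cannot be divisible by a minimal generator of $\ini{I}$, since that would force $m\in\ini{J}$. It is divisible by some $z^{e_{k'}}m'$ with $m'\in B_{k'}$ iff $m'\mid m$ and $\ell\ge e_{k'}$; divisibility forces $k'\le k$ and hence $e_{k'}\ge e_k$, while conversely the choice $k'=k$, $m'=m$ works as soon as $\ell\ge e_k$. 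Therefore $z^{\ell}m\in\ini{I,g}$ iff $\ell\ge e_k=d+\delta-2k$, and
\[
  \widetilde{B}=\{\,z^{\ell}m:\ 0\le k\le\delta,\ m\in B_k,\ 0\le\ell\le d+\delta-2k-1\,\}.
\]

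It remains to sort $\widetilde{B}$ by degree. An element $z^{i-k}m$ with $m\in B_k$ lies in $\widetilde{B}_i$ exactly when $0\le i-k$ and $i-k\le d+\delta-2k-1$, i.e. when $0\le k\le\min\{i,\delta,d+\delta-i-1\}$, so $\widetilde{B}_i=\bigcup_{k=0}^{\min\{i,\delta,d+\delta-i-1\}}z^{i-k}B_k$. Evaluating the minimum using $d\ge\delta$ yields the three regimes in the statement: for $0\le i\le\delta$ it equals $i$, giving $\widetilde{B}_i=\bigcup_{k=0}^{i}z^{i-k}B_k$; for $\delta\le i\le d-1$ it equals $\delta$, giving $\widetilde{B}_i=\bigcup_{k=0}^{\delta}z^{i-k}B_k=z^{i-\delta}\widetilde{B}_{\delta}$; and for $i=d+j$ with $0\le j\le\delta-1$ it equals $\delta-1-j$, giving $\widetilde{B}_{d+j}=\bigcup_{k=0}^{\delta-1-j}z^{d+j-k}B_k=z^{\,d-\delta+1+2j}\,\widetilde{B}_{\delta-1-j}$, which for $j=\delta-1$ reads $z^{d+\delta-1}B_0$. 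These are exactly the displayed equalities.

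I do not expect a real obstacle: once the Proposition is granted, everything is bookkeeping on index sets. The single point that genuinely needs care is the reduction in the second paragraph --- that the cut-off on $\ell$ is \emph{exactly} $d+\delta-2k$ --- which rests on the monotonicity of the $z$-degrees in the Proposition's generating set together with the down-set property of $B$. I would also note that the displayed list tacitly assumes $d>\delta$: when $d=\delta$ the stratum $B_{\delta}$ already leaves at degree $\delta$ (then $\min\{\delta,\delta,d-1\}=\delta-1$) and several of the listed lines degenerate, that case being read directly off the Proposition.
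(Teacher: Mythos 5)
Your argument is correct, and since the paper states this corollary without any proof (it is meant to be read off directly from the preceding Proposition), what you wrote is exactly the bookkeeping the authors left implicit. The only step that genuinely needs care --- that $z^{\ell}m$ with $m\in B_k$ lies in $\ini{I,g}$ if and only if $\ell\ge d+\delta-2k$ --- you handle properly by combining the down-set property of $B$ with the monotonicity of the $z$-exponents $e_{k'}=d+\delta-2k'$ in the generating set; the degree-by-degree sorting is then routine.

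Two remarks worth recording. First, the consistent formula $e_k=d+\delta-2k$ that you use is indeed what the Proposition's proof produces (the new generators at degree $d+i$ are $z^{d+2i-\delta}B_{\delta-i}$ for $0\le i\le\delta$), but the Proposition as displayed has an off-by-one slip in its last two terms, writing $z^{\delta+d-3}B_1$ and $z^{\delta+d-1}B_0$ where it should read $z^{\delta+d-2}B_1$ and $z^{\delta+d}B_0$. Taken literally, the displayed version would put $z^{d+\delta-1}B_0$ into $\ini{I,g}$, contradicting the corollary's final line $\widetilde{B}_{d+\delta-1}=z^{d+\delta-1}B_0$; your implicit correction is the right reading. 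Second, your observation about the edge case $d=\delta$ is apt: there the line $\widetilde{B}_{d-1}=z^{d-\delta-1}\widetilde{B}_\delta$ is nonsensical and the line $\widetilde{B}_\delta=B_\delta\cup zB_{\delta-1}\cup\cdots\cup z^\delta B_0$ collides with $\widetilde{B}_d=z\widetilde{B}_{\delta-1}$, of which only the latter is correct. Your uniform formula $\widetilde{B}_i=\bigcup_{k=0}^{\min\{i,\,\delta,\,d+\delta-i-1\}}z^{i-k}B_k$ resolves the ambiguity cleanly and gives the displayed list whenever $d>\delta$.
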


We now turn to the case $d < \delta$ and $0 \leq i < \delta-d$. In this case, it is clear that the monomials of degree $d+i$ that enter the basis of $\ini{I,g}$ are not necessarily the largest monomials in $E_{i+d}$, and hence the square submatrix of $M_i$ formed by columns corresponding to those monomials is not necessarily nonsingular. In fact, the blocks $\Gamma_{j,k}$ for $0 \leq j <i$ and $d < k \leq d+i$ have all entries equal to zero, thus the matrix formed by columns corresponding to the greatest monomials might have rows of zeroes (it is certainly the case for $d < \delta -2$).

However, for the Moreno-Socías Conjecture to be true, what we need is the greatest \emph{non-redundant} monomials to enter the basis at each step. Let $i^{\star} = \lfloor\frac{\delta-d}{2}\rfloor$. We conjecture the following.

\begin{conjecture}\label{matrixismall}
Suppose $d_1 \leq d_2 \leq \cdots \leq d_n \leq d$, and $0 \leq i \leq i^{\star}$. Let $\Theta_i$ denote the square submatrix of $M_i$ formed by the columns corresponding to the $a_i$ largest monomials of $B_{i+d}$, the $a_{i-1}$ largest monomials of $zB_{i+d-1}$, and so on, up to the $a_0$ largest monomials of $z^{i}B_d$. Then $\Theta_i$ is nonsingular.
\end{conjecture}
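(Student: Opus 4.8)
The plan is to prove that $\det\Theta_i\neq 0$ by first using the block structure of $M_i$ to factor this determinant and then analysing each factor as the determinant of a truncated generic multiplication map on the complete intersection $A=R/J$. The starting point is the remark that, since $\deg g=d$ and grevlex orders $z$ last, reduction modulo $G$ never raises the $x$-degree of a polynomial (cancelling a leading term introduces only grevlex-smaller terms, and for a fixed total degree a monomial of larger $x$-degree is grevlex-larger); hence for $x^{\alpha}\in B_j$ the reduced form of $z^{i-j}gx^{\alpha}$ has $x$-degree at most $d+j$, so $\Gamma_{j,k}=0$ whenever $k>j+d$. Writing the row blocks of $\Theta_i$ from top to bottom as $\mathbf{B}_i,z\mathbf{B}_{i-1},\dots,z^i\mathbf{B}_0$ and the selected column groups from left to right as the $a_i$ largest columns of $\mathbf{B}_{i+d}$, the $a_{i-1}$ largest columns of $z\mathbf{B}_{i+d-1}$, \dots, the $a_0$ largest columns of $z^i\mathbf{B}_d$, the matrix $\Theta_i$ is then block upper triangular: the block in row group $j$ and column group $k$ is a submatrix of $\Gamma_{j,k}$, which vanishes for $k>j+d$, and the diagonal blocks are exactly those with $k=d+j$. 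The $(j,d+j)$ diagonal block $D_j$ is the submatrix of $\Gamma_{j,d+j}$ on its $a_j$ largest columns and the $a_j$ rows indexed by $B_j$, hence square of size $a_j\times a_j$, so $\det\Theta_i=\pm\prod_{j=0}^{i}\det D_j$, and it is enough to prove each $D_j$ nonsingular for $0\le j\le i$.

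Next I would identify $D_j$ concretely. The degree bound above shows that the only part of $g$ contributing to $\Gamma_{j,d+j}$ is its $z$-free part $g_d=\mathbf{v}_d\cdot\mathbf{B}_d$, a generic element of $A_d$; and using the reverse-lexicographic compatibility $\pi(\ini{I})=\ini{J}$ one sees that $\Gamma_{j,d+j}$ is exactly the matrix, in the monomial bases $B_j$ and $B_{d+j}$, of the multiplication map $A_j\to A_{d+j}$, $h\mapsto g_d\cdot h$. Thus $D_j$ is the matrix of the composite $A_j\xrightarrow{\,\cdot g_d\,}A_{d+j}\xrightarrow{\,\pi_j\,}V_j$, where $V_j$ is the span of the $a_j$ grevlex-largest standard monomials of degree $d+j$ and $\pi_j$ is the coordinate projection. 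One first records that $a_j\le a_{d+j}$ throughout $0\le j\le i^{\star}$: this follows from the symmetry about $\delta/2$ and unimodality of the Hilbert function $(a_\nu)$ of $A$ (Proposition \ref{propMS}, Lemma \ref{propStanley}), since $j\le i^{\star}\le(\delta-d)/2$ forces $d+j$ to lie no farther from $\delta/2$ than $j$ does; this is in any case necessary for $D_j$ to be nonsingular, as $D_j$ factors through $A_{d+j}$. Finally, since $\det D_j$ is a polynomial in the coefficients $v_{d,\gamma}$ of $g_d$, its nonvanishing is Zariski-open in those coefficients, so it suffices to exhibit one degree-$d$ form $\phi$ for which the composite $A_j\xrightarrow{\,\cdot\phi\,}A_{d+j}\xrightarrow{\,\pi_j\,}V_j$ is bijective, equivalently for which the $a_j$-plane $\phi\cdot A_j$ meets the span $W_j$ of the $a_{d+j}-a_j$ grevlex-smallest standard monomials of degree $d+j$ only in $0$.

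Producing such a $\phi$ is, I expect, the genuine obstacle. The naive candidate $\phi=x_n^{d}$ fails: by Lemma \ref{lefschetzprop}(i) the set $x_n^{d}B_j$ is precisely the $a_j$ \emph{smallest} standard monomials of degree $d+j$, so $\pi_j\circ(\cdot x_n^{d})=0$. Thus the strong Lefschetz property of $A$ with Lefschetz element $x_n$ (Lemmas \ref{propStanley}--\ref{lefschetzprop}), which sufficed for the range $\delta-d\le i\le\delta$ in Corollary \ref{dmaiorquedelta}, does not by itself settle the present range; one needs a form carrying $A_j$ onto the \emph{largest} monomials. Two routes look plausible. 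One is to take $\phi=\ell^{d}$ for a generic linear form $\ell$ and prove the transversality $\ell^{d}A_j\cap W_j=0$; this amounts to showing that the family $\{\ell^{d}A_j\}$ of $a_j$-planes, as $\ell$ varies, is not contained in the proper closed locus of $a_j$-planes meeting the fixed subspace $W_j$, which is not a formal consequence of the complete-intersection Lefschetz property. The other route, closer to Lemmas \ref{lemmaselec}--\ref{lemmaunisel1}, is to fix a system of distinct representatives $x^{\alpha}\mapsto x^{\gamma(\alpha)}x^{\alpha}$ matching $B_j$ bijectively with the $a_j$ largest standard monomials of degree $d+j$ --- such matchings are unobstructed at the level of divisibility, since every degree-$j$ divisor of a standard monomial is again standard, so what is needed is a Hall-type bound for the corresponding bipartite divisibility graph, to be extracted from the description of $B$ in Proposition \ref{propMS} --- and then to show, as in the proof of Lemma \ref{lemmaunisel}, that the squarefree monomial $\prod_{\alpha}v_{d,\gamma(\alpha)}$ appears in $\det D_j$ through a unique choice of entries, using the diagonal form of the blocks from Lemma \ref{blocklemma}. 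In either approach the crux, and the step I expect to be the real difficulty, is the assertion that generic multiplication $A_j\to A_{d+j}$ stays injective after composing with the coordinate projection onto the grevlex-largest monomials: this is exactly the sharpening of the Lefschetz phenomenon that the Moreno--Socías conjecture encodes, and establishing it will require input beyond Lemmas \ref{propStanley}--\ref{lefschetzprop}.
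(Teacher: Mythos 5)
This statement is labeled \textsc{Conjecture} \ref{matrixismall} in the paper and is \emph{not proved} there; the authors state it as an open conjecture, show that it implies Conjecture \ref{matriximedium} and the Moreno--Soc\'{\i}as conjecture, and only settle the complementary range $\delta - d \le i \le \delta$ (Lemmas \ref{lemmaselec}--\ref{lemmaunisel1}, Corollary \ref{dmaiorquedelta}). So there is no author proof to compare against, and you should not expect to produce one.

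That said, your analysis is correct as far as it goes and isolates the conjecture's content more cleanly than the paper does. The degree bound $\Gamma_{j,k}=0$ for $k>j+d$ is right (grevlex reduction can only raise the $z$-degree, hence cannot raise the $x$-degree of a homogeneous polynomial), and with the row and column orderings you chose this does make $\Theta_i$ block upper triangular with square diagonal blocks $D_j$ of size $a_j\times a_j$, giving $\det\Theta_i=\pm\prod_{j=0}^{i}\det D_j$. Your identification of $D_j$ as the matrix of $A_j\xrightarrow{\,\cdot g_d\,}A_{d+j}$ followed by projection onto the span of the $a_j$ grevlex-largest standard monomials is also correct: the $z$-degree-$(i-j)$ layer of the reduction of $z^{i-j}g\,x^\alpha$ comes only from $z^{i-j}g_d x^\alpha$, and passing through $\pi$ converts $S$-reduction into $R$-reduction modulo $\pi(G)$, a Gr\"obner basis of $J$. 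The inequality $a_j\le a_{d+j}$ on $0\le j\le i^\star$ indeed follows from unimodality and symmetry of the Hilbert function, and you are right that $x_n^d$ is the wrong witness because Lemma \ref{lefschetzprop}(i) puts $x_n^d B_j$ at the \emph{smallest} end of $B_{d+j}$.

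The remaining step you flag---that a generic degree-$d$ form, composed with coordinate projection onto the $a_j$ grevlex-largest standard monomials, gives an isomorphism $A_j\to V_j$---is exactly where the conjecture lives, and neither of the two routes you sketch closes it. Taking $\phi=\ell^d$ for generic $\ell$ would need a transversality statement ($\ell^d A_j\cap W_j=0$) that does not follow from the strong Lefschetz property of $A$, and the Hall-type matching route would need a uniqueness-of-selection argument as in Lemma \ref{lemmaunisel}, but the diagonal structure of Lemma \ref{blocklemma} is organized around the bottom/last rows and columns of each block, which is the small-monomial end, not the large-monomial end; the combinatorics do not transfer automatically. You correctly identify this as the genuine obstruction rather than papering over it. In short: your reduction is sound and arguably sharpens the problem, but the conjecture itself remains open both in the paper and in your proposal.
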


\begin{conjecture}\label{matriximedium}
Suppose $d_1 \leq d_2 \leq \cdots \leq d_n \leq d$, and $i^{\star} < i < \delta-d$. Let $\Theta_i$ denote the square submatrix of $M_i$ formed by columns corresponding to
\begin{enumerate}[{\normalfont (i)}]
\item all monomials in $B_{d+j}$, for $\delta-d-i \leq j \leq i$, and
\item the $a_j$ largest monomials in $B_{d+j}$, for $0 \leq j < \delta-d-i$.
\end{enumerate}
Then $\Theta_i$ is nonsingular.
\end{conjecture}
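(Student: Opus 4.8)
The plan is to adapt, essentially verbatim, the argument already carried out for the range $\delta-d\le i\le\delta$ in Lemmas~\ref{lemmaselec}--\ref{lemmaunisel1} and Corollary~\ref{dmaiorquedelta}. The determinant $\det\Theta_i$ is a polynomial in the coefficients of $g$, so it suffices to exhibit one monomial in those coefficients --- a \emph{witness term} --- that appears in the permutation expansion of $\det\Theta_i$ in exactly one way, and hence cannot be cancelled. As in the treated case I expect the witness to have the shape $\prod_{\ell}c_{\iota(\ell)}^{\,m_\ell(a_\ell-a_{\ell-1})}$ for multiplicities $m_\ell$ and indices $\iota(\ell)$ dictated by the block pattern of $\Theta_i$, and the whole proof to split into an \emph{existence} part (a greedy rule producing a transversal whose entry product is this witness plus other terms in the coefficients of $g$) and a \emph{uniqueness} part (no other transversal reproduces the witness).

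First I would record the block structure of $\Theta_i$. Its rows split into groups $B_0,\dots,B_i$ and its columns into the sets of parts (i) and (ii); the $(j,k)$ block is $\Gamma_{j,k}$, and $\Gamma_{j,k}=0$ whenever $k>j+d$ --- because reduction modulo $G$ never lowers the $z$-degree (each leading term of $G$ lies in $K[x_1,\dots,x_n]$), so the reduced form of $z^{i-j}gB_j$, whose monomials all have $z$-degree $\ge i-j$, cannot meet $z^{d+i-k}B_k$ once $d+i-k<i-j$. Thus a staircase-shaped region of blocks vanishes and every transversal must route around it. Inside each surviving block Lemma~\ref{blocklemma} pins down the ``$c_{k-j}+L$'' entries: they sit on the diagonal of the square subblock cut out by the last $|B_j|$ columns when $|B_j|\le|B_k|$, and on the diagonal of the subblock cut out by the last $|B_k|$ rows when $|B_j|\ge|B_k|$ --- and which alternative holds flips as $j$ passes $\delta/2$, a bookkeeping point that has to be handled. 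Using the Lefschetz/Stanley structure of Lemmas~\ref{lefschetzprop} and~\ref{propStanley} (unimodality of the $a_\nu$, the symmetry $a_\nu=a_{\delta-\nu}$, and $a_\ell\le a_j$ for $\ell\le j$) one checks both that $\Theta_i$ is genuinely square and that the truncations prescribed in part (ii) are exactly what is needed so that every block still contains the rows and columns the greedy rule will want.

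The selection itself mirrors Lemma~\ref{lemmaselec}: at step $\ell$ one picks $a_\ell-a_{\ell-1}$ fresh entries of the form $c_{\iota(\ell)}+L$ from a diagonal band of blocks $\Gamma_{j,k}$, always taking ``the next $a_\ell-a_{\ell-1}$ rows from the bottom'' and ``the next $a_\ell-a_{\ell-1}$ columns from the right'' so that no row or column is reused, the band now being \emph{shortened} on the side occupied by the vanishing blocks and by the truncated blocks of part (ii). One must verify there are still $a_\ell-a_{\ell-1}$ available rows and columns in each block of the band --- again an inequality among the $a_\nu$ --- and then multiply out to obtain the witness plus other terms coming from the $L$'s and from the $*$-entries. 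Uniqueness is the analogue of Lemma~\ref{lemmaunisel}: a downward induction on $\ell$ showing that once the exponents demanded by the witness for $c_{\iota(0)},\dots,c_{\iota(\ell-1)}$ are exhausted, the only entry left in the relevant row carrying a coefficient that occurs in the witness is the prescribed one, and this forcing propagates block by block and row by row; here one also uses that each $c_m$ enters $g$ through a single term, so the perturbations $L$ only involve $c_{m+1},\dots,c_\delta$ and cannot manufacture the witness a second time. Having shown $\det\Theta_i\ne0$, the conclusion follows from the argument of the proposition treating the case $d\ge\delta$: row-reducing $M_i$ against the nonsingular $\Theta_i$ makes each column-monomial of $\Theta_i$ the leading monomial of some resulting row, so the greatest non-redundant monomials of degree $d+i$ enter $\ini{I,g}$, which is precisely the almost reverse lexicographic shape.

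The main obstacle --- and the reason this remains a conjecture --- is exactly the hybrid, asymmetric geometry of $\Theta_i$ in this middle range. In the already-settled range $\Theta_i$ is assembled only from full blocks arranged so that every diagonal band has the same length; here $\Theta_i$ interleaves full blocks (part (i)), truncated blocks (part (ii)) and identically-zero blocks, the bands have varying lengths, and the combinatorial bookkeeping that must simultaneously (a) keep enough rows and columns available in every band, (b) make the exponent tally land \emph{exactly} on a witness monomial, and (c) exclude every alternative transversal --- including ones exploiting the perturbations $L$, which in part (ii) blocks may even carry a $c_{k-j}$ in the top rows --- is substantially heavier, especially near $j=\delta-d-i$, where parts (i) and (ii) meet, and near $j=\delta/2$, where the alignment in Lemma~\ref{blocklemma} flips. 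Settling this, together with Conjecture~\ref{matrixismall} for $0\le i\le i^\star$, would establish $\ini{I,g}$ for every $d$ with $d\ge d_n$, which is the strengthening needed to reach the full Moreno-Socías Conjecture.
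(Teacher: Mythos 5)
You are attempting a single, global witness-term argument on the full hybrid matrix $\Theta_i$, and you correctly identify the difficulty this creates — untruncated part-(i) columns, truncated part-(ii) columns, and zero blocks all coexisting, with the boundaries at $j=\delta-d-i$ and $j\approx\delta/2$ being particularly troublesome. But the observation you already record, that $\Gamma_{j,k}=0$ whenever $k>j+d$, resolves the whole issue if you push it one step further. For rows indexed by $B_j$ with $j<\delta-d-i$ paired with any part-(i) column (these have $k\ge\delta-i>d+j$), the entire block vanishes. So after ordering rows $B_i,\dots,B_0$ and placing the part-(i) columns before the part-(ii) columns, $\Theta_i$ is block upper triangular:
$$\Theta_i=\left(\begin{array}{c|c}\Lambda_i & \Omega\\ \hline 0 & \Theta_{\delta-d-i-1}\end{array}\right),$$
where $\Lambda_i=(\Gamma_{j,k})_{\delta-d-i\le j\le i,\ \delta-i\le k\le d+i}$ and the lower-right corner is literally the matrix $\Theta_{\delta-d-i-1}$ of Conjecture~\ref{matrixismall}, at an index $\delta-d-i-1\le i^\star$. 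Hence $\det\Theta_i=\det\Lambda_i\cdot\det\Theta_{\delta-d-i-1}$.

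This factorization is what the paper does, and note that the paper does not prove Conjecture~\ref{matriximedium} outright — it only reduces it to Conjecture~\ref{matrixismall} — so you should not try to do more. The point of the factorization is that it cleanly quarantines the two difficulties you flag: $\Lambda_i$ contains only the untruncated part-(i) columns, so the greedy selection and uniqueness arguments of Lemmas~\ref{lemmaselec}--\ref{lemmaunisel1} transfer verbatim, with witness term
$c_d^{(2i+d-\delta+1)a_{d+i}}\,c_{d-1}^{(2i+d-\delta)(a_{d+i-1}-a_{d+i})}\cdots c_{\delta-2i}^{a_{\delta-i}-a_{\delta-i+1}}$
chosen along anti-diagonals of constant $k-j=d-\ell$ (which stay away from the zero blocks, since $d-\ell\le d$). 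Meanwhile, the truncated part-(ii) columns and the rows they interact with sit entirely in $\Theta_{\delta-d-i-1}$, which is the other open conjecture and is treated as a black box. The idea you are missing is therefore not a cleverer or more delicate selection rule, but a determinant factorization that splits $\det\Theta_i\neq 0$ into an unconditionally provable piece and a strictly smaller instance of Conjecture~\ref{matrixismall}.
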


In fact, matrices $\Theta_j$ for $0 \leq j \leq i^{\star}$ are submatrices of $\Theta_i$ for $i^{\star}< i <\delta-d$, and if we can prove the smaller matrices are nonsingular, we are actually able to prove all $\Theta_i$ are nonsingular.

\begin{proposition} Conjecture \ref{matrixismall} implies Conjecture \ref{matriximedium}. \end{proposition}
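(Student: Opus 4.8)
The plan is to show that, for each $i$ with $i^{\star} < i < \delta-d$, the matrix $\Theta_i$ of Conjecture~\ref{matriximedium} decomposes, after reordering rows and columns, into a block triangular matrix one of whose diagonal blocks is (a relabelling of) one of the matrices $\Theta_j$ with $0 \leq j \leq i^{\star}$ covered by Conjecture~\ref{matrixismall}, while the other is a ``full-block'' matrix of exactly the type already treated in Lemmas~\ref{lemmaselec}--\ref{lemmaunisel1}. Fix such an $i$ and set $t = \delta-d-i$; then $t \geq 1$ because $i < \delta-d$, and $t = \delta-d-i \leq \lceil(\delta-d)/2\rceil - 1 \leq \lfloor(\delta-d)/2\rfloor = i^{\star}$ because $i > i^{\star}$, so $0 \leq t-1 \leq i^{\star}$. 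I would order the row blocks of $\Theta_i$ as $z^{i-j'}\mathbf{B}_{j'}$ for $j' = i, i-1, \dots, 0$, calling those with $t \leq j' \leq i$ the \emph{high} rows and those with $j' < t$ the \emph{low} rows; and order the column blocks so that the type~(i) columns of Conjecture~\ref{matriximedium} (all of $z^{d+i-k}\mathbf{B}_k$ for $d+t \leq k \leq d+i$) precede the type~(ii) columns (the $a_{k-d}$ largest monomials of $z^{d+i-k}\mathbf{B}_k$ for $d \leq k \leq d+t-1$), calling these the \emph{full} and the \emph{partial} columns. By the symmetry $a_\nu = a_{\delta-\nu}$ of the Hilbert function of $A$, the high rows are as many as the full columns (hence the low rows are as many as the partial columns), so in this ordering
\[
\Theta_i = \begin{pmatrix} P & Q \\ 0 & C \end{pmatrix},
\]
where $P$ (high rows, full columns) and $C$ (low rows, partial columns) are square, once one checks that the lower left block vanishes; then $\det\Theta_i = \det P \cdot \det C$.

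The vanishing of the lower left block I would deduce from the fact, already used above, that $\ini{I}$ is generated by monomials not divisible by $z$: the leading monomials of the reduced Gröbner basis $G$ then do not involve $z$, so a reduction step modulo $G$ cannot lower the order of a polynomial with respect to $z$. Since for $m \in B_{j'}$ the polynomial $z^{i-j'}mg$ has $z$-order at least $i-j'$, so does its reduced form, i.e.\ every monomial of that reduced form has degree at most $j'+d$ in $x_1, \dots, x_n$; hence $\Gamma_{j',k} = 0$ whenever $k > j'+d$, and in particular for a low row ($j' < t$) against a full column ($k \geq d+t > d+j'$). I would then identify $C$ with $\Theta_{t-1}$: its row blocks are $z^{i-j'}\mathbf{B}_{j'}$ and its column blocks are the $a_{k-d}$ largest monomials of $z^{d+i-k}\mathbf{B}_k$ for $0 \leq j' \leq t-1$ and $d \leq k \leq d+t-1$, which differ from the corresponding blocks of $\Theta_{t-1}$ only by the common extra factor $z^{\,i-t+1}$; since $z$ does not occur in $G$, multiplying both the monomial being reduced and all target monomials by this factor leaves the scalar coefficients untouched, so $C = \Theta_{t-1}$ as a matrix, which is nonsingular by Conjecture~\ref{matrixismall} since $0 \leq t-1 \leq i^{\star}$.

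What remains — and what I expect to be the main obstacle — is to prove that $P$ is nonsingular. The matrix $P$ is built from the blocks $\Gamma_{j',k}$ with $t \leq j' \leq i$ and $d+t \leq k \leq d+i$, so it has exactly the shape of the matrices $\Theta_{i'}$ with $\delta-d \leq i' \leq \delta$ of Lemmas~\ref{lemmaselec}--\ref{lemmaunisel1}, the only difference being that the block index now runs over $\{t,\dots,i\}$ instead of $\{0,\dots,i\}$ (the blocks on the anti-diagonal $k = j'+d$ being those from which $c_d$ is read off by Lemma~\ref{blocklemma}). The plan is to imitate Lemma~\ref{lemmaselec}: to pick one entry from each row and column of $P$ in successive steps $\ell$, at step $\ell$ choosing the entries of the form $c_{d-\ell}+L$ supplied by Lemma~\ref{blocklemma} in a suitable collection of blocks along the $\ell$-th anti-diagonal, so that the product of the chosen entries contains the monomial in the $c_\nu$'s that is, for this shifted index range, the natural analogue of the monomial in~(\ref{term1}) (when $i \geq \delta/2$) or in~(\ref{term2}) (when $i \leq \delta/2$); and then, exactly as in Lemma~\ref{lemmaunisel}, to check that this monomial can be produced in only one way, so that it survives in $\det P$. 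The genuinely technical point is to redo this selection-and-uniqueness bookkeeping in the shifted range: that there are always enough entries available to pick and that the uniqueness argument still closes. What should make it go through is the same self-consistency that drives the unshifted case — namely $a_{d+j} = a_{\delta-d-j}$ with $\delta-d-j \in \{t,\dots,i\}$ whenever $j \in \{t,\dots,i\}$, so that the column-block sizes of $P$ are, in reverse order, its row-block sizes. Granting $\det P \neq 0$, we conclude $\det\Theta_i = \det P \cdot \det C \neq 0$ for every $i^{\star} < i < \delta-d$, which is Conjecture~\ref{matriximedium}.
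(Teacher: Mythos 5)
Your proof is correct and takes essentially the same approach as the paper: decompose $\Theta_i$ into block upper-triangular form, identify the lower-right block with $\Theta_{\delta-d-i-1}$ (your $\Theta_{t-1}$) to which Conjecture~\ref{matrixismall} applies, and show the upper-left block (the paper's $\Lambda_i$, your $P$) is nonsingular by the same selection-and-uniqueness argument used in Lemmas~\ref{lemmaselec}--\ref{lemmaunisel1}. You are somewhat more explicit than the paper in justifying the vanishing of the lower-left block via $\Gamma_{j,k}=0$ for $k>d+j$ and in checking that the two diagonal pieces are square via $a_{d+j}=a_{\delta-d-j}$; the paper, for its part, writes out the explicit surviving monomial~(\ref{term3}) in $\det\Lambda_i$ and the precise step-by-step choice of anti-diagonal blocks, which fills in the bookkeeping you leave as a plan.
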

\begin{proof}
Let $i^{\star} < i < \delta-d$. Let $\Lambda_i$ denote the submatrix of $\Theta_i$ formed by the following blocks
\begin{equation*}
\Lambda_i = \begin{pmatrix}
              \Gamma_{i,d+i} & \Gamma_{i,d+i-1} & \cdots & \Gamma_{i,\delta-i} \\
              \Gamma_{i-1,d+i} & \Gamma_{i-1,d+i-1} & \cdots & \Gamma_{i-1,\delta-i} \\
              \vdots & \vdots & \ddots & \vdots \\
              \Gamma_{\delta-d-i,d+i} & \Gamma_{\delta-d-i,d+i-1} & \cdots & \Gamma_{\delta-d-i,\delta-i} \\
            \end{pmatrix}
\end{equation*}
Then, $\Theta_i$ can be written as
\begin{equation*}
\Theta_i = \left( \begin{array}{c|c}
\Lambda_i & \Omega \\ \hline 0 & \Theta_{\delta-d-i-1}
\end{array}
\right)
\end{equation*}
that is, the columns formed by $ {\Lambda_i \choose 0} $ are the ones in Conjecture \ref{matriximedium}(i), and the columns formed by ${\Omega} \choose \Theta_{\delta-d-i-1}$ are the columns in (ii).

Now, $\det \Theta_i = \det (\Lambda_i) \cdot \det (\Theta_{\delta-d-i-1})$. If Conjecture \ref{matrixismall} is true, then $\det \Theta_{\delta-d-i-1} \neq 0$. So we need to see that $\det \Lambda_i \neq 0$. In fact, an argument similar to that applied in Lemmas \ref{lemmaselec}-\ref{lemmaunisel1} can be used. We claim the term
\begin{equation}\label{term3}
c_d^{(2i+d-\delta+1)a_{d+i}} c_{d-1}^{(2i+d-\delta)(a_{d+i-1}-a_{d+i})} \cdots c_{\delta-2i}^{(a_{\delta-i}-a_{\delta-i+1})}
\end{equation}
appears in the determinant of $\Lambda_i$. Again we start by selecting entries from the blocks on the diagonal at step 0, and then from the blocks above the diagonal at step 1, and so on.

In general, at step $\ell$, for $0 \leq \ell \leq 2i+d-\delta$, we select entries from the blocks
$$ \Gamma_{i,d_i-\ell}, \Gamma_{i-1, d+i-\ell-1}, \dots, \Gamma_{\delta-d-i+\ell, \delta-i}.$$

We select the entries in the diagonal of the bottom $a_{d+i-\ell}$ rows and right-most $a_{d+i-\ell}$ columns, skipping the bottom $a_{d+i-\ell+1}$. The proof that these selections can be made, and that this is the only way of obtaining the term (\ref{term3}) is identical to Lemma \ref{lemmaselec} and Lemma \ref{lemmaunisel}.
\end{proof}

The condition $d_1 \leq d_2 \leq \cdots \leq d_n \leq d$ is necessary. When the degrees are not in this order, our description fails, as we can see from the next example.

\begin{example}
Let $I$ be the ideal in $K[x_1, x_2, x_3, z]$ generated by generic forms of degree $4$, $4$ and $5$, and let $g$ be a generic polynomial of degree $3$. According to Conjecture \ref{matrixismall}, we consider the matrix $\Theta_3$ formed by the $a_3 = 10$ greatest monomials in $B_6$, $a_2 = 6$ greatest monomials in $zB_5$, $a_1 =3$ greatest monomials in $z^2B_4$, and $a_0 =1$ greatest monomial in $z^3B_3$. This matrix, however, is singular, as the row corresponding to $x_3z^2 \cdot g$ is zero. In fact, the monomial $x_2^4z^2$, which is the third greatest monomial in $z^2B_4$, is not in the basis of $\ini{I,g}$.
\end{example}

When $d = \delta-1$, the only matrix treated in Conjecture \ref{matrixismall} is $\Theta_0$, which is a one by one matrix whose single entry is the leading coefficient of $g$, and thus is nonzero. For $d = \delta-2$, $\Theta_0$ is once again a one by one matrix whose entry is $\lc{g}$, and $\Theta_1$ is given by
\begin{equation*}
\Theta_1 = \left(
\begin{array}{c|c}
\Gamma_{1,\delta-1} & \Omega \\ \hline
0 & \lc{g}
\end{array}
\right) = \left(
\begin{array}{cccc|c}
c_{\delta-2}+L & * & \cdots & * & * \\
* & c_{\delta-2}+L & \cdots & * & * \\
 \vdots & \vdots & \ddots & \vdots & \vdots \\
* & * &  \cdots & c_{\delta-2}+L & * \\ \hline
0 & 0 & \cdots & 0 & \lc{g}
\end{array}
\right)
\end{equation*}
so $\det \Theta_1 = \lc{g} \cdot \det \Gamma_{1,\delta-1}$, and the determinant of $\Gamma_{1,\delta-1}$ is nonzero because the term $c_{\delta-2}^{a_1}$ appears in it. This, together with the results from the previous section, proves the following.

\begin{proposition}\label{inarl}
Suppose $d \geq \delta-2$. If $\ini{I}$ is almost reverse lexicographic, then $\ini{I,g}$ is almost reverse lexicographic.
\end{proposition}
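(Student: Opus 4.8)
The plan is to show that the generators of $\ini{I,g}$ that come from degrees $d, d+1, \dots, d+\delta$ are exactly the largest non-redundant monomials of the appropriate degrees, given that the same holds for $\ini{I}$ (equivalently $\ini{J}$). The hypothesis $d \geq \delta - 2$ restricts us, via the discussion of this section, to the matrices $\Theta_i$ with $\delta - d \leq i \leq \delta$ — which are nonsingular by Corollary \ref{dmaiorquedelta} — together with at most the two small cases $\Theta_0$ and $\Theta_1$ in degrees $d = \delta - 1, \delta - 2$, which were just checked to be nonsingular. So the matrix-theoretic input is already in hand; what remains is to read off the shape of $\ini{I,g}$ from it and verify the weak-reverse-lexicographic condition.

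First I would recall, from the structure of $E$ and the identity $E_i = B_i \cup zE_{i-1}$, that a monomial of $S$ lies in $\ini{I,g}$ but not in $\ini{I}$ precisely when it is a leading monomial of one of the rows $v_j$ after the row reduction of Equation (\ref{matrixmi}); and that the $z$-divisible monomials among these are redundant, having appeared already in a lower degree, because $M_{i-1}$ is a submatrix of $M_i$. Since $\ini{I}$ is generated by monomials not divisible by $z$, a minimal generator of $\ini{I,g}$ that is not already a minimal generator of $\ini{I}$ must be divisible by $z$ (it involves $g$), hence lies in $z^k B_{i+d-k}$ for some $k \geq 1$; non-redundancy forces it into the top block, i.e. into $z^{?}B_{\delta - i}$-type positions as exhibited in the Proposition for $d \geq \delta$ and in the analogous computation for $d = \delta-1, \delta-2$. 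For each such degree $d + i$, the nonsingularity of $\Theta_i$ (the submatrix on the columns indexed by the $|E_i|$ largest monomials of $E_{i+d}$) guarantees that these largest monomials all occur as leading terms, so the new generators in degree $d+i$ are the largest possible candidates of that degree.

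Next I would combine this with the hypothesis that $\ini{I}$ is almost reverse lexicographic. Write the minimal generators of $\ini{I,g}$ as the union of (a) the minimal generators of $\ini{I}$ and (b) the new, $z$-divisible generators located in degrees $d, d+1, \dots, d+\delta$ described above. For a generator $x^\alpha$ of type (a): any monomial $x^\beta$ with $\deg x^\beta = \deg x^\alpha$ and $x^\beta > x^\alpha$ lies in $\ini{I} \subseteq \ini{I,g}$ by hypothesis, so the weakly-reverse-lexicographic condition is inherited. For a generator $x^\alpha = z^k m$ of type (b) in degree $d+i$: a monomial $x^\beta$ of the same degree with $x^\beta > x^\alpha$ either is divisible by a sufficiently high power of $z$ to be a $z$-multiple of a lower-degree generator (hence in $\ini{I,g}$), or else has smaller $z$-exponent; in the latter case grevlex comparison and the fact that $\Theta_i$ picks up \emph{all} monomials larger than $x^\alpha$ in the relevant block (because $B_{\delta-i} = x_n^{\delta-2i}B_i$ and, by Lemma \ref{lefschetzprop}, the largest $|E_i|$ monomials of $E_{i+d}$ form exactly such a "downward-closed" set) forces $x^\beta \in \ini{I,g}$. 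Thus every monomial above a minimal generator, in that generator's degree, is again in $\ini{I,g}$, which is the definition of almost reverse lexicographic.

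The main obstacle I anticipate is the bookkeeping in the last step: one must verify that the set of columns selected for $\Theta_i$ really is an order ideal "from the top" in each degree $d+i$ with $\delta - d \leq i \leq \delta$, so that nonsingularity of $\Theta_i$ translates into "every monomial grevlex-above a new generator of that degree is in $\ini{I,g}$" rather than merely "enough monomials of that degree are in $\ini{I,g}$". This is where Lemma \ref{lefschetzprop} and Lemma \ref{propStanley} do the real work — they are precisely the statements that the bottom-most $|B_j|$ monomials of $B_k$ are $x_n^{k-j}B_j$, so the complementary top monomials are exactly the ones excluded — and one has to check that redundancy (divisibility by monomials already in $\ini{I}$ or by lower-degree new generators) never creates a gap in this order ideal. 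Once the $d = \delta - 1$ and $d = \delta - 2$ cases are dispatched by the explicit one- and two-block determinant computations displayed above, and the $d \geq \delta$ case by the preceding Proposition and its Corollary, assembling these observations gives the claim.
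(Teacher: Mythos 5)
Your overall structure coincides with the paper's (the paper's proof is the single sentence ``For all $t\geq d$, the minimal generators of degree $t$ introduced to the basis of $\ini{I,g}$ are the largest monomials in $E_t$'', resting on Corollary \ref{dmaiorquedelta} and the explicit $\Theta_0,\Theta_1$ checks for $d=\delta-1,\delta-2$). However, one step in your argument is false: you assert that ``a minimal generator of $\ini{I,g}$ that is not already a minimal generator of $\ini{I}$ must be divisible by $z$.'' For $d = \delta-1$ and $d = \delta-2$ --- precisely the cases this proposition adds beyond the preceding $d\geq\delta$ result --- the new minimal generators in degrees $d,\dots,d+i^{\star}$ include monomials from $B_d, B_{d+1},\dots$ with no $z$ factor at all; the column indexed by the largest monomial of $B_d$ is exactly what $\Theta_0 = (\lc{g})$ picks out. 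In the paper's worked example with $d_1=d_2=4$, $d=4=\delta-2$, the new minimal generator $x_1^2x_2^2$ is not divisible by $z$.

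As a consequence, your type-(a)/type-(b) split for verifying the almost-reverse-lexicographic condition silently omits these new non-$z$-divisible generators: they are in neither class. The gap is repairable --- such a new generator $m\in B_t$ is grevlex-largest among the monomials of $E_t$ not yet in $\ini{I,g}$, and every degree-$t$ monomial strictly larger than $m$ either has $z$-exponent $0$ and lies in $\ini{J}\subset\ini{I}$, or is a $z$-multiple of a smaller-degree member of $\ini{I,g}$ --- but as written your case analysis does not reach them. Beyond this one point, the mechanics (nonsingularity of $\Theta_i$ in the range $\delta-d\leq i\leq\delta$ plus the low-degree checks, reading off the leading monomials as the largest of $E_{i+d}$, discarding redundant ones, then checking the order-ideal-from-the-top property degree by degree) are the same as the paper's.
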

\begin{proof}
For all $t\geq d$, the minimal generators of degree $t$ introduced to the basis of $\ini{I,g}$ are the largest monomials in $E_t$.
\end{proof}

Using induction we have a partial answer to Moreno-Socías Conjecture.

\begin{theorem}\label{morenosociaspartial}
Let $I = ( f_1, \dots, f_n) \subset K\nvar$ be a generic ideal, with $\deg(f_i) = d_i$ and $d_i \geq \left(\sum_{j=1}^{i-1} d_j\right) -i -1$. Then $\ini{I}$ is almost reverse lexicographic.
\end{theorem}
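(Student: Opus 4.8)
The plan is to prove this by induction on $n$, the number of generators, building the ideal one generic form at a time via the incremental machinery of Section~\ref{g2v} and the results of Section~\ref{genericsb}. The base case $n=1$ is immediate: $I=(f_1)$ is principal, $f_1$ generic of degree $d_1$, and $\ini{I}=(x_n^{d_1})$ is trivially almost reverse lexicographic. For the inductive step, suppose the claim holds for generic ideals generated by $n-1$ forms, so $I' = (f_1,\dots,f_{n-1}) \subset K[x_1,\dots,x_{n-1}]$ has an almost reverse lexicographic initial ideal; here I use the observation (made in the excerpt) that adjoining $z$ and reducing modulo grevlex does not change the minimal generators of the initial ideal, so I may work with $J = \pi(I)$ in one fewer variable whenever convenient and lift back.

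The heart of the argument is to show that when we pass from $I'$ (viewed, after adding the extra variable, as a generic ideal in the right number of variables) to $I = (I', f_n)$ by adjoining the last generic form $g := f_n$ of degree $d := d_n$, the initial ideal stays almost reverse lexicographic. Here is where the degree hypothesis $d_n \geq \bigl(\sum_{j=1}^{n-1} d_j\bigr) - n - 1$ enters. Write $\delta^* = d_1 + \cdots + d_{n-1} - (n-1)$ for the top degree of $R/I'$ (the quantity $\delta$ of the excerpt, but for the smaller ideal). The hypothesis $d_n \geq \sum_{j=1}^{n-1} d_j - n - 1 = \delta^* - 2$ says precisely that $d \geq \delta^* - 2$, which is exactly the regime covered by Proposition~\ref{inarl}. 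So I would invoke Proposition~\ref{inarl}: since $\ini{I'}$ is almost reverse lexicographic by the inductive hypothesis, and $d = \deg g \geq \delta^* - 2$, Proposition~\ref{inarl} gives that $\ini{(I', g)} = \ini{I}$ is almost reverse lexicographic, closing the induction. One must also check that $(f_1,\dots,f_{n-1},f_n)$ is genuinely a generic ideal in the sense of the paper's definition (coefficients algebraically independent over the base field extended by the earlier coefficients), which is built into the setup at the start of Section~\ref{genericsb} and into the reduction of $g$ modulo $G$.

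The main obstacle — and the step that needs the most care — is matching the bookkeeping of variables and the parameters $\delta, \delta^*, \sigma, \mu$ between the inductive hypothesis and the hypotheses of Proposition~\ref{inarl}. At each stage the ambient ring gains a variable (the role of $z$), so I need to confirm that the "$I$" feeding into Proposition~\ref{inarl} at step $n$ is the generic ideal of type $(n, d_1,\dots,d_{n-1})$ whose top socle degree is the $\delta^*$ above, that its initial ideal being almost reverse lexicographic is exactly what the inductive hypothesis delivers (after the $\pi$/lifting identification of minimal generators), and that the inequality $d \geq \delta^* - 2$ is the correct translation of $d_n \geq \sum_{j=1}^{n-1} d_j - n - 1$. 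A secondary point is the very first inductive step where one passes from one to two generators: one should verify directly that $\ini{(f_1, f_2)}$ is almost reverse lexicographic when $d_2 \geq d_1 - 3$, which again is Proposition~\ref{inarl} with $\delta^* = d_1 - 1$, or alternatively falls under the classical $n=2$ results cited in the introduction. Once the indexing is pinned down, the induction runs mechanically and Proposition~\ref{inarl} (resting on Corollary~\ref{dmaiorquedelta} and Lemmas~\ref{lemmaselec}--\ref{lemmaunisel1}) does all the real work.
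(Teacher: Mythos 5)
Your proposal is correct and matches the paper's own proof: induction on $n$ with the base case $n=1$ trivial, and the inductive step reduced to Proposition~\ref{inarl} via the identification $d_n \geq \sum_{j=1}^{n-1} d_j - n - 1 = \delta^* - 2$ and the fact that $\pi$ (killing the last variable, which is regular) preserves the minimal generators of the initial ideal under grevlex. In fact you spell out the degree bookkeeping and the $\pi$-lifting more explicitly than the paper does; the argument is the same.
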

\begin{proof}
The result clearly holds for $n=1$. Assuming it holds for $n-1$, the initial ideal of $( f_1, \dots, f_{n-1})$ $\subset K[x_1, \dots, x_{n-1}]$ is almost reverse lexicographic. By Proposition \ref{inarl}, $\ini{I}$ is almost reverse lexicographic.
\end{proof}

The theorem above is somewhat more general than the result given in \cite{MR2397406}, where Cho and Park proved the case $d_i > \left(\sum_{j=1}^{i-1} d_j\right) -i +1$. We believe that our approach is promising, and that by investigating further the properties of $\mathrm{B}(I)$ and the structure of the matrices from Conjecture \ref{matrixismall}, we could be able to give an answer to Moreno-Socías Conjecture.

If Conjecture \ref{matrixismall} is true, then we can give a description of $\ini{I,g}$ as follows. We will use the following notation: for a set $S = \{s_1, \dots, s_{\ell}\}$ and $1 \leq a \leq b \leq \ell$, let
\begin{eqnarray*}
S^{[a,b]} & = & \{ s_a, \dots, s_b\}, \\
S^{(a,b]} & = & \{s_{a+1}, \dots, s_b\}.
\end{eqnarray*}
If $\delta-d \equiv 0 \mo 2$, the initial ideal of $( I,g )$ can be described as
\begin{eqnarray*}
\ini{I,g} & = ( & \ini{I}, B_{d}^{[1,a_0]}, B_{d+1}^{[1,a_1]}, \dots, B_{d+i^{\star}-1}^{[1,a_{i^{\star}-1}]}, B_{d+i^{\star}}, \\
& & z^2B_{d+i^{\star}-1}^{(a_{i^{\star}-1},a_{d+i^{\star}-1}]}, z^4B_{d+i^{\star}-2}^{(a_{i^{\star}-2},a_{d+i^{\star}-2}]}, \dots, z^{\delta-d}B_d^{(a_0,a_d]}, \\
& & z^{\delta-d+2}B_{d-1}, \dots, z^{\delta+d-2}B_1, z^{\delta+d}B_0 \ \ \ ).
\end{eqnarray*}

The corresponding set of standard monomials $\widetilde{B}$ is
\begin{eqnarray*}
\widetilde{B}_0 & = & B_0 \\
\widetilde{B}_1 & = & B_1 \cup zB_0 \\
\widetilde{B}_2 & = & B_2 \cup zB_1 \cup z^2B_0 \\
& \vdots & \\
\widetilde{B}_{d-1} & = & B_{d-1} \cup z\widetilde{B}_{d-2} \\
\widetilde{B}_d & = & B_d^{(a_0,a_d]} \cup z\widetilde{B}_{d-1} \\
\widetilde{B}_{d+1} & = & B_{d+1}^{(a_1,a_{d+1}]} \cup z\widetilde{B}_{d-2} \\
& \vdots & \\
\widetilde{B}_{d+i^{\star}} & = & z\widetilde{B}_{d+i^{\star}-1} \\
\widetilde{B}_{d+i^{\star}+1} & = & z^3\widetilde{B}_{d+i^{\star}-2} \\
& \vdots & \\
\widetilde{B}_{\delta} & = & z^{\delta-d+1}\widetilde{B}_{d-1} \\
\widetilde{B}_{\delta+1} & = & z^{\delta-d+3}\widetilde{B}_{d-2} \\
& \vdots & \\
\widetilde{B}_{\delta+d-1} & = & z^{\delta+d-1}\widetilde{B}_0.
\end{eqnarray*}

If $\delta-d \equiv 1 \mo 2$, the initial ideal of $( I,g )$ can be described as
\begin{eqnarray*}
\ini{I,g} & = ( & \ini{I}, B_{d}^{[1,a_0]}, B_{d+1}^{[1,a_1]}, \dots, B_{d+i^{\star}}^{[1,a_{i^{\star}}]}, \\
& & zB_{d+i^{\star}}^{(a_{i^{\star}},a_{d+i^{\star}}]}, z^3B_{d+i^{\star}-1}^{(a_{i^{\star}-1},a_{d+i^{\star}-1}]}, \dots, z^{\delta-d}B_d^{(a_0,a_d]}, \\
& & z^{\delta-d+2}B_{d-1}, \dots, z^{\delta+d-2}B_1, z^{\delta+d}B_0 \ \ \ ).
\end{eqnarray*}

The corresponding set of standard monomials $\widetilde{B}$ is
\begin{eqnarray*}
\widetilde{B}_0 & = & B_0 \\
\widetilde{B}_1 & = & B_1 \cup zB_0 \\
\widetilde{B}_2 & = & B_2 \cup zB_1 \cup z^2B_0 \\
& \vdots & \\
\widetilde{B}_{d-1} & = & B_{d-1} \cup z\widetilde{B}_{d-2} \\
\widetilde{B}_d & = & B_d^{(a_0,a_d]} \cup z\widetilde{B}_{d-1} \\
\widetilde{B}_{d+1} & = & B_{d+1}^{(a_1,a_{d+1}]} \cup z\widetilde{B}_{d-2} \\
& \vdots & \\
\widetilde{B}_{d+i^{\star}} & = & B_{d+i^{\star}}^{(a_{i^{\star}}, a_{d+i^{\star}}]} \cup z\widetilde{B}_{d+i^{\star}-1} \\
\widetilde{B}_{d+i^{\star}+1} & = & z^2\widetilde{B}_{d+i^{\star}-1} \\
\widetilde{B}_{d+i^{\star}+2} & = & z^4\widetilde{B}_{d+i^{\star}-2} \\
& \vdots & \\
\widetilde{B}_{\delta} & = & z^{\delta-d+1}\widetilde{B}_{d-1} \\
\widetilde{B}_{\delta+1} & = & z^{\delta-d+3}\widetilde{B}_{d-2} \\
& \vdots & \\
\widetilde{B}_{\delta+d-1} & = & z^{\delta+d-1}\widetilde{B}_0.
\end{eqnarray*}

From the description above, we have that Conjecture \ref{matrixismall} implies that $\ini{I,g}$ is almost reverse lexicographic, and hence also implies the Moreno-Socías conjecture.

\begin{example}
Let $f_1, f_2$ be generic polynomials of degrees $d_1 = d_2 = 4$, and let $I = ( f_1, f_2 )$. The initial ideal of $I$ is given by
$$ \ini{I} =( x_1^4, x_1^3x_2, x_1^2x_2^3, x_1x_2^5, x_2^7 ). $$
Then $\delta=6$, and we consider $g$ of degree $d = 4 = \delta -2$.
 \begin{eqnarray*}
 g & = & b_1 x_1^2x_2^2 + b_2 x_1x_2^3 + b_3 x_2^4 + b_4x_1^3z + b_5x_1^2x_2z + b_6x_1x_2^2z + b_7x_2^3z + b_8x_1^2z^2 + b_9x_1x_2z^2  \\
 & & + b_{10}x_2^2z^2 + b_{11}x_1z^3 + b_{12}x_2z^3 + b_{13}z^4.
 \end{eqnarray*}

 We give the matrices $\Theta_i$ below. We write entries as functions of the coefficients $b_i$'s. All entries have the form $b_i + L(b_1, \dots, b_{i-1})$ or $L(b_1, \dots, b_i)$. We show only the entries of the first form, ignoring the $L$ portion. The entries selected to form the terms in Lemma \ref{lemmaselec} and Lemma \ref{lemmaselec2} are shown in boldface.  We start with $\Theta_{6} = M_6$:
\begin{equation*}
\arraycolsep=1.4pt\def\arraystretch{0.8}
\kbordermatrix{
 & x_2^6 & x_1x_2^4 & x_2^5 & x_1^2x_2^2 & x_1x_2^3 & x_2^4 & x_1^3 & x_1^2x_2 & x_1x_2^2 & x_2^3 & x_1^2 & x_1x_2 & x_2^2 & x_1 & x_2 & 1 \\
 x_2^6 & \mathbf{b_{13}} &  & & & & & & & & & & & & & & \\
x_1x_2^4 & & \mathbf{b_{13}} & & & & & & & & & & & & & & \\
x_2^5 & b_{12} &  & \mathbf{b_{13}} & & & & & & & & & & & & & \\
x_1^2x_2^2 &  &  &  & \mathbf{b_{13}} & & & & & & & & & & & & \\
x_1x_2^3 &  & b_{12} &  &  & \mathbf{b_{13}} & & & & & & & & & & & \\
x_2^4 & b_{10} & b_{11} & b_{12} &  &  & \mathbf{b_{13}} & & & & & & & & & & \\
x_1^3 & &  & & & & & \mathbf{b_{13}} & & & & & & & & & \\
x_1^2x_2 & &  & & b_{12} & & &  & \mathbf{b_{13}} & & & & & & & & \\
x_1x_2^2 & & b_{10} & & b_{11} & b_{12} & &  &  & \mathbf{b_{13}} & & & & & & & \\
x_2^3 & b_7 & b_{9} & b_{10} &  & b_{11} & b_{12} &  &  &  & \mathbf{b_{13}} & & & & & & \\
x_1^2 &  &  &  & b_{10} &  &  & b_{11} & b_{12} &  & & \mathbf{b_{13}} & & & & & \\
x_1x_2 &  & b_7 &  & b_{9} & b_{10} &  & & b_{11} & b_{12}  & &  & \mathbf{b_{13}} & & & & \\
x_2^2 & b_3 & b_6 & b_7 & b_{8} & b_{9} & b_{10} & & & b_{11} & b_{12} &  & & \mathbf{b_{13}} & & & \\
x_1 &  & b_3 & & b_{6} & b_{7} &  & & b_9 & b_{10} &  & b_{11} & b_{12} &  & \mathbf{b_{13}} & & \\
x_2 &  & b_2 & b_3 & b_{5} & b_{6} & b_7 & & b_8 & b_{9} & b_{10} & & b_{11} & b_{12} & & \mathbf{b_{13}} & \\
1 &  & & & b_1 & b_{2} & b_3 & b_4 & b_5 & b_{6} & b_{7} & b_8 & b_{9} & b_{10} & b_{11} & b_{12} & \mathbf{b_{13}} \\
}.
\end{equation*}
The entries in boldface give a nonzero term in $\det \Theta_6$. Since the determinant is nonzero, performing row operations on
$$ \mathbf{E}_6 \cdot g \equiv M_6 \mathbf{E}_{10} \mo G,$$
all monomials in $E_{10}$ will appear as leading monomials on the right-hand side. Thus, the monomials
$$ x_2^6z^4, x_1x_2^4z^5, x_2^5z^5, x_1^2x_2^2z^6, x_1x_2^3z^6, x_2^4z^6, x_1^3z^7, x_1^2x_2z^7, x_1x_2^2z^7, x_2^3z^7, x_1^2z^8, x_1x_2z^8, x_2^2z^8, x_1z^9, x_2z^9, z^{10}$$
are in the basis of $\ini{I,g}$. The matrix $\Theta_5$ is obtained from $\Theta_6$ by removing the top row and right-most column. Again we show in boldface the entries that are used to guarantee that the determinant of this matrix is nonzero. This is the form of $\Theta_5$
\begin{equation*}
\arraycolsep=1.4pt\def\arraystretch{0.8}
\kbordermatrix{
 & x_2^6 & x_1x_2^4 & x_2^5 & x_1^2x_2^2 & x_1x_2^3 & x_2^4 & x_1^3 & x_1^2x_2 & x_1x_2^2 & x_2^3 & x_1^2 & x_1x_2 & x_2^2 & x_1 & x_2 \\
x_1x_2^4 & & \mathbf{b_{13}} & & & & & & & & & & & & & \\
x_2^5 & \mathbf{b_{12}} &  & {b_{13}} & & & & & & & & & & & &  \\
x_1^2x_2^2 &  &  &  & \mathbf{b_{13}} & & & & & & & & & & &  \\
x_1x_2^3 &  & b_{12} &  &  & \mathbf{b_{13}} & & & & & & & & & &  \\
x_2^4 & b_{10} & b_{11} & \mathbf{b_{12}} &  &  & {b_{13}} & & & & & & & & &  \\
x_1^3 & &  & & & & & \mathbf{b_{13}} & & & & & & & &  \\
x_1^2x_2 & &  & & b_{12} & & &  & \mathbf{b_{13}} & & & & & & &  \\
x_1x_2^2 & & b_{10} & & b_{11} & b_{12} & &  &  & \mathbf{b_{13}} & & & & & &  \\
x_2^3 & b_7 & b_{9} & b_{10} &  & b_{11} & \mathbf{b_{12}} &  &  &  & {b_{13}} & & & & & \\
x_1^2 &  &  &  & b_{10} &  &  & b_{11} & b_{12} &  & & \mathbf{b_{13}} & & & &  \\
x_1x_2 &  & b_7 &  & b_{9} & b_{10} &  & & b_{11} & b_{12}  & &  & \mathbf{b_{13}} & & &  \\
x_2^2 & b_3 & b_6 & b_7 & b_{8} & b_{9} & b_{10} & & & b_{11} & \mathbf{b_{12}} &  & & {b_{13}} & &  \\
x_1 &  & b_3 & & b_{6} & b_{7} &  & & b_9 & b_{10} &  & b_{11} & b_{12} &  & \mathbf{b_{13}} &  \\
x_2 &  & b_2 & b_3 & b_{5} & b_{6} & b_7 & & b_8 & b_{9} & b_{10} & & b_{11} & \mathbf{b_{12}} & & {b_{13}}  \\
1 &  & & & b_1 & b_{2} & b_3 & b_4 & b_5 & b_{6} & b_{7} & b_8 & b_{9} & b_{10} & b_{11} & \mathbf{b_{12}} \\
}.
\end{equation*}
So, performing row operations on
$$\mathbf{E}_5 \cdot g \equiv M_5 \mathbf{E}_9 \mo G$$
leads to the 15 greatest monomials in $E_{9}$ being leading monomials on the right-hand side, which means that
$$ x_2^6z^3, x_1x_2^4z^4, x_2^5z^4, x_1^2x_2^2z^5, x_1x_2^3z^5, x_2^4z^5, x_1^3z^6, x_1^2x_2z^6, x_1x_2^2z^6, x_2^3z^6, x_1^2z^7, x_1x_2z^7, x_2^2z^7, x_1z^8, x_2z^8$$
are in $\ini{I,g}$. Next, we consider
$$ \mathbf{E}_4 \cdot g \equiv M_4 \mathbf{E}_8 \mo G. $$
The matrix $M_4$ is $13 \times 16$, and $\Theta_4$ is the $13 \times 13$ submatrix given by
\begin{equation*}
\arraycolsep=1.4pt\def\arraystretch{0.8}
\kbordermatrix{
 & x_2^6 & x_1x_2^4 & x_2^5 & x_1^2x_2^2 & x_1x_2^3 & x_2^4 & x_1^3 & x_1^2x_2 & x_1x_2^2 & x_2^3 & x_1^2 & x_1x_2 & x_2^2  \\
x_1^2x_2^2 &  &  &  & \mathbf{b_{13}} & & & & & & & & &  \\
x_1x_2^3 &  & \mathbf{b_{12}} &  &  & {b_{13}} & & & & & & & &  \\
x_2^4 & \mathbf{b_{10}} & b_{11} & {b_{12}} &  &  & {b_{13}} & & & & & & &   \\
x_1^3 & &  & & & & & \mathbf{b_{13}} & & & & & &   \\
x_1^2x_2 & &  & & b_{12} & & &  & \mathbf{b_{13}} & & & & &  \\
x_1x_2^2 & & b_{10} & & b_{11} & \mathbf{b_{12}} & &  &  & {b_{13}} & & & &  \\
x_2^3 & b_7 & b_{9} & \mathbf{b_{10}} &  & b_{11} & {b_{12}} &  &  &  & {b_{13}} & & &\\
x_1^2 &  &  &  & b_{10} &  &  & b_{11} & b_{12} &  & & \mathbf{b_{13}} & &  \\
x_1x_2 &  & b_7 &  & b_{9} & b_{10} &  & & b_{11} & \mathbf{b_{12}}  & &  & {b_{13}} &   \\
x_2^2 & b_3 & b_6 & b_7 & b_{8} & b_{9} & \mathbf{b_{10}} & & & b_{11} & {b_{12}} &  & & {b_{13}}   \\
x_1 &  & b_3 & & b_{6} & b_{7} &  & & b_9 & b_{10} &  & b_{11} & \mathbf{b_{12}} &   \\
x_2 &  & b_2 & b_3 & b_{5} & b_{6} & b_7 & & b_8 & b_{9} & \mathbf{b_{10}} & & b_{11} & {b_{12}}   \\
1 &  & & & b_1 & b_{2} & b_3 & b_4 & b_5 & b_{6} & b_{7} & b_8 & b_{9} & \mathbf{b_{10}} \\
},
\end{equation*}
which is also a submatrix of $\Theta_5$, obtained by removing the rows corresponding to $B_5$ and the columns corresponding to $B_1$. After row operations,
$$ x_2^6z^2, x_1x_2^4z^3, x_2^5z^3, x_1^2x_2^2z^4, x_1x_2^3z^4, x_2^4z^4, x_1^3z^5, x_1^2x_2z^5, x_1x_2^2z^5, x_2^3z^5, x_1^2z^6, x_1x_2z^6, x_2^2z^6$$
are leading monomials. Similarly, removing from $\Theta_4$ the rows corresponding to $B_4$ and the columns corresponding to $B_2$, we get $\Theta_3$ given by
\begin{equation*}
\arraycolsep=1.4pt\def\arraystretch{0.8}
\kbordermatrix{
 & x_2^6 & x_1x_2^4 & x_2^5 & x_1^2x_2^2 & x_1x_2^3 & x_2^4 & x_1^3 & x_1^2x_2 & x_1x_2^2 & x_2^3 \\
x_1^3 & &  & & & & & \mathbf{b_{13}} & & & \\
x_1^2x_2 & &  & & \mathbf{b_{12}} & & &  & {b_{13}} & & \\
x_1x_2^2 & & \mathbf{b_{10}} & & b_{11} & {b_{12}} & &  &  & {b_{13}} &  \\
x_2^3 & \mathbf{b_7} & b_{9} & {b_{10}} &  & b_{11} & {b_{12}} &  &  &  & {b_{13}} \\
x_1^2 &  &  &  & b_{10} &  &  & b_{11} & \mathbf{b_{12}} &  & \\
x_1x_2 &  & b_7 &  & b_{9} & \mathbf{b_{10}} &  & & b_{11} & {b_{12}}  &  \\
x_2^2 & b_3 & b_6 & \mathbf{b_7} & b_{8} & b_{9} & {b_{10}} & & & b_{11} & {b_{12}}    \\
x_1 &  & b_3 & & b_{6} & b_{7} &  & & b_9 & \mathbf{b_{10}} &    \\
x_2 &  & b_2 & b_3 & b_{5} & b_{6} & \mathbf{b_7} & & b_8 & b_{9} & {b_{10}}   \\
1 &  & & & b_1 & b_{2} & b_3 & b_4 & b_5 & b_{6} & \mathbf{b_{7}} \\
}.
\end{equation*}
The leading monomials obtained are
$$ x_2^6z, x_1x_2^4z^2, x_2^5z^2, x_1^2x_2^2z^3, x_1x_2^3z^3, x_2^4z^3, x_1^3z^4, x_1^2x_2z^4, x_1x_2^2z^4, x_2^3z^4.$$
Next, $\Theta_2$ is given by
\begin{equation*}
\arraycolsep=1.4pt\def\arraystretch{0.8}
\kbordermatrix{
 & x_2^6 & x_1x_2^4 & x_2^5 & x_1^2x_2^2 & x_1x_2^3 & x_2^4  \\
x_1^2 &  &  &  & \mathbf{b_{10}} &  &   \\
x_1x_2 &  & \mathbf{b_7} &  & b_{9} & {b_{10}} &   \\
x_2^2 & \mathbf{b_3} & b_6 & {b_7} & b_{8} & b_{9} & {b_{10}} \\
x_1 &  & b_3 & & b_{6} & \mathbf{b_{7}} &   \\
x_2 &  & b_2 & \mathbf{b_3} & b_{5} & b_{6} & {b_7} \\
1 &  & & & b_1 & b_{2} & \mathbf{b_3} \\
},
\end{equation*}
and the elements in $E_6$ that are leading monomials are
$$x_2^6, x_1x_2^4z, x_2^5z, x_1^2x_2^2z^2, x_1x_2^3z^2, x_2^4z^2.$$
The matrix $\Theta_1$ is given by
\begin{equation*}
\arraycolsep=1.4pt\def\arraystretch{0.8}
\kbordermatrix{
  & x_1x_2^4 & x_2^5 & x_1^2x_2^2 \\
x_1 &  \mathbf{b_3} & & b_{6}  \\
x_2 &  b_2 & \mathbf{b_3} & b_{5}  \\
1 &  & & \mathbf{b_1} \\
},
\end{equation*}
and the monomials of degree 5 that enter the basis of $\ini{I,g}$ are
$$ x_1x_2^4, x_2^5, x_1^2x_2^2z. $$
Finally, $\Theta_0$ is the $1 \times 1$ matrix
\begin{equation*}
\arraycolsep=1.4pt\def\arraystretch{0.8}
\kbordermatrix{
  & x_1^2x_2^2 \\
1 & \mathbf{b_1} \\
},
\end{equation*}
and the monomial $x_1^2x_2^2$ is in $\ini{I,g}$. Putting all the leading monomials we found together, and discarding the redundant ones, we have
\begin{eqnarray*}
\ini{I,g} & = & ( x_1^4, x_1^3x_2, x_1^2x_2^2, x_1x_2^4, x_2^5, x_1x_2^3z^2, x_2^4z^2, x_1^3z^4, x_1^2x_2z^4, \\
& & x_1x_2^2z^4, x_2^3z^4, x_1^2z^6, x_1x_2z^6, x_2^2z^6, x_1z^8, x_2z^8, z^{10} ),
\end{eqnarray*}
which is an almost reverse lexicographical monomial ideal.
\end{example}

\bibliographystyle{acm}   


\end{document}